\definecolor{darkred}{rgb}{0.75,0,0.3}
\definecolor{DARKRED}{rgb}{0.75,0,0.3}
\newcommand{\adjustedaccent}[1]{%
  \mathchoice{}{}
    {\mbox{\raisebox{-.5ex}[0pt][0pt]{$\scriptstyle#1$}}}
    {\mbox{\raisebox{-.35ex}[0pt][0pt]{$\scriptscriptstyle#1$}}}
                               } 
\newcommand\frownacc[1]{\overset{\adjustedaccent{\frown}}{#1}}
\newcommand{\dol}{\frownacc}
\newcommand\AB{\mathbb A}
\newcommand\AND{\quad\text{and}\quad}
\newcommand\area{\mathsf{area}}
\newcommand{\barra}{\,|\,}
\newcommand\C{\mathbb C}
\newcommand\DD{\mathbb D}
\newcommand\DDbar{\overline\DD}
\newcommand\ep{\varepsilon}
\newcommand\hor{\mathfrak{h}}
\newcommand\im{\mathfrak{i}\,}
\newcommand\Lap{\mathfrak{L}}
\newcommand\leb{\mathsf m}
\newcommand{\Mcal}{\mathcal M}
\newcommand{\Mfrak}{\mathfrak M}
\newcommand\mus{s}
\newcommand\N{\mathbb N}
\DeclareMathOperator{\Poiss}{\boldsymbol \Pi}
\newcommand\R{\mathbb R}
\newcommand\Z{\mathbb Z}
\newtheorem{theorem}{Theorem}
\numberwithin{theorem}{section}
\newtheorem{pro}[theorem]{Proposition}
\newtheorem{lem}[theorem]{Lemma}
\newtheorem{cor}[theorem]{Corollary}
\theoremstyle{definition}
\newtheorem{dfn}[theorem]{Definition}
\newtheorem{rem}[theorem]{Remark}
\newtheorem{disc}[theorem]{Discussion}
\newtheorem{ques}[theorem]{Question}
\begin{document}$\,$ \vspace{-1truecm}
\title{Polyharmonic potential theory on the Poincar\'e disk}
\author{\bf Massimo A. Picardello, Maura Salvatori, Wolfgang Woess}

\address{\parbox{.8\linewidth}{Dipartimento di Matematica\\ 
Universit\`a di Roma ``Tor Vergata''\\
I-00133 Rome, Italy}}
\email{picard@axp.mat.uniroma2.it}

\address{\parbox{.8\linewidth}{Dipartimento di Matematica\\ 
Universit\`a degli Studi di Milano\\
Via Saldini, 50, I-20133 Milano, Italy}}
\email{maura.salvatori@unimi.it}

\address{\parbox{.8\linewidth}{Institut f\"ur Diskrete Mathematik,\\ 
Technische Universit\"at Graz,\\
Steyrergasse 30, A-8010 Graz, Austria}}
\email{woess@tugraz.at}

\thanks{The first author acknowledges support by MIUR Excellence Departments Project awarded to
the Department of Mathematics, University of Rome Tor Vergata,  MatMod@TOV. The second author
acknowledges support as a visiting scientist
at TU Graz. The third author was supported by Austrian Science Fund 
project FWF P31889-N35.}
\subjclass[2020] {31A30; 
                  43A85, 
                  58J32  
                  }
\keywords{Hyperbolic disk, hyperbolic Laplacian, polyharmonic functions, 
          analytic functionals, Riquier problem, Fatou type theorem}
          
          
\begin{abstract}
We consider the open unit disk $\DD$ equipped with the hyperbolic metric and the associated
hyperbolic Laplacian $\Lap\,$. For $\lambda \in \C$ and $n \in \N$, 
a $\lambda$-polyharmonic function of order $n$ is a function $f: \DD \to \C$ such that
$(\Lap - \lambda \, I)^n f = 0$. If $n =1$, one gets $\lambda$-harmonic functions.
Based on a Theorem of Helgason on the latter functions, we prove a boundary integral 
representation theorem for $\lambda$-polyharmonic functions. For this purpose, we 
first determine $n^{\text{th}}$-order $\lambda$-Poisson kernels. Subsequently, we introduce
the $\lambda$-polyspherical functions and determine their 
asymptotics at the boundary $\partial \DD$, i.e., the unit circle.  
In particular, this proves that, for eigenvalues not in the
interior of the $L^2$-spectrum,  the zeroes of these functions do not accumulate at the 
boundary circle. Hence the polyspherical functions can be used to normalise the 
$n^{\text{th}}$-order Poisson kernels.  By this tool, we extend to this setting 
several classical results of potential theory: namely, we study the boundary behaviour of 
$\lambda$-polyharmonic functions, starting with  Dirichlet and Riquier type problems and 
then proceeding to Fatou type admissible boundary limits. 
\end{abstract}

\maketitle
       
\markboth{{\sf M. A. Picardello, M. Salvatori and W. Woess}}
{{\sf Polyharmonic potential theory on the Poncar\'e disk}}
\baselineskip 15pt

\begin{center}
{\it Dedicated to the memory of
Alessandro Fig\`a-Talamanca (1938-2023)}
\end{center}

\medskip

\section{Introduction}\label{sec:intro}
The aim of this article is to initiate a detailed study of the 
potential theory associated with the polyharmonic, and more generally, $\lambda$-polyharmonic 
functions for the hyperbolic Laplacian $\Lap$, that is, the solutions  $f$ of 
$(\Lap - \lambda \, I)^n f = 0$, for $n\in\N$ and $\lambda\in\C$.


A \emph{polyharmonic function of order $n$} on a Euclidean domain $D$ is a
complex-valued function $f$ ond $D$ that belongs to the kernel 
of the $n^{\text{th}}$ iterate of classical Euclidean Laplacian: $\Delta^n f \equiv 0$.
The study of polyharmonic functions goes back to work in the $19^{\text{th}}$ 
century, and continues to be a very active topic. See e.g. the books
by {\sc Aronszajn, Creese and Lipkin}~\cite{ACL} or by 
{\sc Gazzola, Grunau and Sweers}~\cite{GGS}. A classical theorem of 
{\sc Almansi}~\cite{Al} says that if the domain $D$ is star-like with respect
to the origin, then every polyharmonic function of order $n$
has a unique decomposition
$$
f(z) = \sum_{k=0}^{n-1} |z|^{2k}\, h_k(z)\,,
$$
where each $h_k$ is harmonic on $D$, and $|z|$ is the Euclidean length of $z \in D$.
In particular, let the domain be the unit disk
$$
\DD = \{ z= x + \im y \in \C : |z| = \sqrt{x^2 + y^2} < 1 \}.
$$
Assume for the moment that in Almansi's decomposition, each $h_k$ is non-negative.
Then it has an integral representation over the boundary $\partial \DD$ of the disk,
that is, the unit circle, with respect to the \emph{Poisson kernel}
\begin{equation}\label{eq:poisson}
P(z,\xi) = \frac{1 - |z|^2}{|\xi - z|^2} \qquad (z \in \DD\,,\; \xi 
 = e^{\im \phi} \in \partial \DD)
\end{equation} 
Thus, Almansi's decomposition on the disk reads as
\begin{equation}\label{eq:almansi}
f(z) = \sum_{k=0}^{n-1} \int_{\partial \DD} |z|^{2k}\, P(z,\xi)\,d\nu_k(\xi)\,,
\end{equation}
where $\nu_0, \dots, \nu_{n-1}$ are non-negative Borel measures on the unit 
circle. Without requiring non-negativity of the $h_k\,$, the result still
remains true, taking \emph{analytic functionals} (i.e., certain distributions) $\nu_k$ 
instead of  Borel measures: this follows from  results by
{\sc Helgason}~\cite{He}, \cite{Hel} which will be of crucial importance further below.

\smallskip

We now change our viewpoint and view $\DD$ as the \emph{hyperbolic} or 
\emph{Poincar\'e disk} with the hyperbolic length element and resulting metric
\begin{equation}\label{eq:hypmetric}
ds =  \frac{2\sqrt{dx^2 + dy^2}}{1-|z|^2} \AND
\rho(z,w) = \log\frac{|1-z\bar w| + |z-w|}{|1-z\bar w| - |z-w|}.
\end{equation}
We note that the Poisson kernel can be written as 
\begin{equation}\label{eq:busemann}
P(z,\xi) = e^{-\hor(z,\xi)} \quad \text{with}\quad \hor(z,\xi) = 
\lim_{w \to \xi} \Bigl(\rho(w,z) - \rho(w,0)\Bigr)\,,
\end{equation} 
the \emph{Busemann function}. 
The hyperbolic Laplace (or Laplace-Beltrami) operator in the variable $z = x + \im y$ is 
\begin{equation}\label{eq:hypLap}
\Lap = \frac{(1-|z|^2)^2}{4}\, \Bigl( \partial_x^2 + \partial_y^2\Bigr)\,. 
\end{equation}
The harmonic functions for the two Laplacians on the disk clearly 
coincide, but this is no more true for polyharmonic functions of higher order.
While there is an abundant ongoing literature on polyharmonic functions in the
Euclidean setting, we are not aware of an ample body of work for the hyperbolic Laplacian,
or more generally, for Laplace-Beltrami operators on manifolds. A few references
are, for example, {\sc Chung, Sario and Wang}~\cite{ChSaWa}, {\sc Chung}~\cite{Ch} as
well as {\sc Schimming and Belger}~\cite{SchBe} plus some of the citations in the latter paper, 
and also {\sc Jaming}~\cite{Jam}.

The first main aim of this note is to provide an integral representation
in the spirit of \eqref{eq:almansi} for \emph{hyperbolically} polyharmonic functions
of order $n$. More generally, we consider
\emph{$\lambda$-polyharmonic} functions of order $n$, that is, solutions $f: \DD \to \C$ 
of 
$$
(\Lap - \lambda \, I)^n f = 0\,. 
$$
Here, $I$ is the identity operator, and we are taking the $n^{\textrm{th}}$ iterate of  
$\,\Lap - \lambda \, I\,$, where $\lambda \in \C$. If $n=1$, we speak of a 
\emph{$\lambda$-harmonic function.} Considering $\lambda$ 
as an ``eigenvalue'', one should be careful with respect to the space on which
the operator acts. Indeed, we are \emph{not} referring to the action of 
$\Lap$ as a self-adjoint operator on $L^2(\DD,\area_h)$, 
where $\area_h$ is the hyperbolic area measure of $\DD$ and the corresponding spectrum
$(-\infty\,,-\frac14\,]$ is continuous. The mapping
$$
\lambda(\mus) = \mus^2 - \frac14\,,\quad \mus \in \C\,,\; \Re(\mus) \geqslant 0
$$
maps the half-open half plane $\{ \mus \in \C : \Re (\mus) \geqslant 0 \}
\setminus \{ \im t : t < 0\}$
bijectively onto $\C$. 
We write $\mus(\lambda) = \sqrt{\lambda + \frac14}$ for the inverse mapping, where the square root
of $r e^{\im \phi}$ is $\sqrt{r}\, e^{\im \phi/2}$ for $r \geqslant 0$ and $\phi \in (-\pi\,,\,\pi]$. 

Here is our first main result.

\begin{theorem}\label{thm:poly}
Every $\lambda$-polyharmonic function $f: \DD \to \C$ of order $n$ has a unique
representation of the form 
$$
f(z) = \begin{cases}
\displaystyle \sum_{k=0}^{n-1} \int_{\partial \DD} \hor(z,\xi)^k\, 
P(z,\xi)^{\mus(\lambda)+1/2}\,d\nu_k(\xi)\,,\quad&\text{if }\; \lambda \ne -\frac14\,,\\[14pt]
\displaystyle \sum_{k=0}^{n-1} \int_{\partial \DD} \hor(z,\xi)^{2k}\, 
P(z,\xi)^{1/2}\,d\nu_k(\xi)\,,\quad&\text{if }\; \lambda = -\frac14\,,
\end{cases}
$$
where  $\nu_0, \dots, \nu_{n-1}$ are analytic functionals on $\partial \DD$.
\end{theorem}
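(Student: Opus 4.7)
My plan is to prove Theorem \ref{thm:poly} by induction on $n$, using Helgason's integral representation for $\lambda$-harmonic functions as the base case $n=1$. The essential ingredient is a clean formula for the action of $T := \Lap - \lambda\, I$ on the candidate kernels $\hor^k P^s$, where $s = \mus(\lambda)+1/2$.

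First I would establish the identity
$$
T(\hor^k P^s) \;=\; -k(2s-1)\,\hor^{k-1}\,P^s \,+\, k(k-1)\,\hor^{k-2}\,P^s.
$$
The quickest route is to differentiate the eigenfunction equation $\Lap P^s = s(s-1) P^s$ repeatedly in $s$, using $\partial_s P^s = -\hor\,P^s$ and hence $\hor^k P^s = (-1)^k \partial_s^k P^s$. Iterating the identity shows that $\hor^k P^s$ is $\lambda$-polyharmonic of order $k+1$, so integrals of these kernels against analytic functionals on $\partial \DD$ land inside the target space. Commuting $T$ with such integrals is legitimate because $(z,\xi) \mapsto \hor(z,\xi)^k P(z,\xi)^s$ depends real-analytically on $\xi \in \partial\DD$ for each fixed $z \in \DD$.

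Assume for now $\lambda \ne -\frac14$, so $2s-1 \ne 0$. A short induction on the formula above yields $T^{n-1}(\hor^{n-1} P^s) = (-1)^{n-1}(2s-1)^{n-1}(n-1)!\,P^s$, while $T^{n-1}(\hor^k P^s) = 0$ for $k < n-1$. For existence, given $f$ with $T^n f = 0$, the function $g := T^{n-1}f$ is $\lambda$-harmonic, so Helgason provides a unique analytic functional $\mu$ with $g(z) = \int_{\partial\DD} P(z,\xi)^s\,d\mu(\xi)$. Taking $\nu_{n-1}$ to be the appropriate scalar multiple of $\mu$, the difference $f - \int_{\partial\DD} \hor(\cdot,\xi)^{n-1} P(\cdot,\xi)^s\, d\nu_{n-1}(\xi)$ is annihilated by $T^{n-1}$, and the inductive hypothesis supplies $\nu_0, \dots, \nu_{n-2}$. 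Uniqueness is dual: if $\sum_{k=0}^{n-1}\int \hor^k P^s\,d\nu_k = 0$, then applying $T^{n-1}$ gives a multiple of $\int P^s\,d\nu_{n-1} = 0$, which by Helgason's uniqueness forces $\nu_{n-1} = 0$, and we peel off and induct.

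The exceptional case $\lambda = -\frac14$ (so $s = 1/2$) runs by the same scheme with $\hor^{2k}$ in place of $\hor^k$. When $2s-1 = 0$ the formula becomes $T(\hor^k P^{1/2}) = k(k-1)\hor^{k-2}P^{1/2}$: the operator $T$ now lowers the $\hor$-degree by two rather than one, so $\hor^{2k}P^{1/2}$ is $\lambda$-polyharmonic of order $k+1$, and one computes $T^{n-1}(\hor^{2(n-1)}P^{1/2}) = (2n-2)!\, P^{1/2}$ with $T^{n-1}(\hor^{2k}P^{1/2}) = 0$ for $k < n-1$. The same inductive argument then runs verbatim. The main obstacles I anticipate are not in this skeleton but in the careful handling of analytic functionals --- in particular, justifying interchange of $T$ with the integral against such functionals, and verifying that the Helgason representation we need really works uniformly at the degenerate point $\lambda = -\frac14$, where the two roots $s$ and $1-s$ of $s(s-1)=\lambda$ coincide.
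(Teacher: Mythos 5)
Your proof is correct, and its overall architecture coincides with the paper's: induction on $n$, with Helgason's theorem producing the top-order functional $\nu_{n-1}$ from the $\lambda$-harmonic function $(\Lap-\lambda I)^{n-1}f$ at each step, and uniqueness inherited from the $n=1$ case by applying $(\Lap-\lambda I)^{n-1}$ to a vanishing representation. The genuine difference lies in how the key identity for $(\Lap-\lambda I)(\hor^k P^s)$ is obtained. The paper proves a more general reduction lemma --- for any $f\in C^2(\R)$, applying $\Lap-\lambda I$ to $f(-\hor)\,P^{\mus+1/2}$ replaces $f$ by $f''+2\mus f'$ --- by passing to the logarithmic model of the half-plane, where $\hor$ and $P$ become linear resp.\ exponential in one coordinate, and then builds the kernels $P_n$ by solving the ODE $f''+2\mus f'=g$ recursively. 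You instead differentiate the eigenfunction equation $\Lap P^s=s(s-1)P^s$ in the spectral parameter $s$, which yields the same identity at once; your constants $(-1)^{n-1}(2s-1)^{n-1}(n-1)!$ and $(2n-2)!$ are consistent with the paper's normalisations of $P_{n-1}(z,\xi\barra\lambda)$. Notably, the paper remarks that the differentiation approach (taken with respect to $\lambda$, as in the tree setting of \cite{PW}) fails at the critical value $\lambda=-\frac14$; your variant sidesteps this because $\lambda=s(s-1)$ is smooth in $s$ even where the inverse map degenerates, so the identity --- with the first-order term dropping out at $s=\frac12$ and $\Lap-\lambda I$ then lowering the $\hor$-degree by two --- holds uniformly, and both cases of the theorem run on the same engine. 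The two technical points you flag are indeed unproblematic: commuting $\Lap-\lambda I$ with the pairing against an analytic functional is justified exactly as in the paper (the kernel is holomorphic in $\xi$ on a fixed annulus, locally uniformly in $z$), and Helgason's theorem as quoted holds for every $\lambda\in\C$, including $-\frac14$.
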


We postpone the precise definition of these functionals to \S \ref{sec:derive}, and we shall also
rescale the kernels in the integrals in a suitable manner to get the (order $n+1$) 
$\lambda$-polyharmonic Poisson kernels $P_n(z,\xi \,|\, \lambda)$, $n \geqslant 0$; see Proposition \ref{pro:polykernel}. 

Our proof of Theorem \ref{thm:poly} in \S \ref{sec:derive} is inspired 
by  related results obtained in a discrete setting, mostly in recent
work of {\sc Picardello and Woess}~\cite{PW} on polyharmonic functions on general trees, that
was preceded by a long paper by {\sc Cohen et al.}~\cite{CCGS} who had used rather
involved 
methods to obtain an integral representation for polyharmonic functions
on a regular tree with respect to the standard graph Laplacian. (In \cite{PW}, this
is generalised and simplified.) Further motivation for the present work came from 
{\sc Sava-Huss and Woess}~\cite{Sava&Woess}, who studied the boundary behaviour of 
polyharmonic functions on regular trees. There are many profound analogies
between the hyperbolic disk and regular trees. In
the potential theoretic setting considered here, see the first part of 
the note by {\sc Boiko and Woess}~\cite{BW} for an exposition of those 
analogies.\footnote{In the formula for the hyperbolic Laplacian -- 
which is \eqref{eq:hypLap} here -- one of the two
squares is missing~in~\cite{BW}.}  

The natural next goal is to study the asymptoptic behaviour of $\lambda$-polyharmonic functions. For this purpose, but also by inherent interest and for further possible applications, in
\S \ref{sec:poly}, we introduce the family of polyspherical functions $\Phi_n(z \,|\, \lambda)$,
i.e., suitably normalised $\lambda$-polyharmonic functions of order $n+1$ which only depend
on $r = |z|$. Here, the functions $\Phi_0(z \,|\, \lambda)$ for $\lambda \in \C$ are the classical
spherical functions of the Poincar\'e disk. A major step, in itself of interest,
is to determine the asymptotic behaviour 
of $\Phi_n(z \,|\, \lambda)$ near $\partial \DD$,
that is, as $r \to 1$ or equivalently, $R = \rho(z,0) \to \infty\,$; see 
Theorem \ref{thm:asymp}. 

This is important because, for the study of the boundary behaviour of $\lambda$-polyharmonic 
functions one needs  a suitable normalisation of the polyharmonic kernels 
$P_n(z,\xi \,|\, \lambda)$, in order to compensate for their growth or decay; 
this normalisation is then accomplished  in \S \ref{sec:kernels}; it 
extends the classical case $n=0$ via the laborious computations of \S \ref{sec:poly}.
Indeed, when $n=0$, it is well-known that it is appropriate to 
normalise $\lambda$-harmonic functions by the $\lambda$-spherical function, see e.g.   
{\sc Michelson}~\cite{Mich} and {\sc Sj\"ogren}~\cite{Sj}, and for regular trees 
{\sc Kor\'anyi and Picardello}~\cite{KP}. This cannot be done for 
$\lambda \in (-\infty\,,\,-\frac14)$, 
because for these values of $\lambda$ the zeroes of 
the $\lambda$-spherical functions accumulate at the boundary circle, while for all other values of 
$\lambda$, there are no zeroes at all; see Remark \ref{rem:zeroes}.

So for arbitrary $n$, our normalisation consists in dividing by  
$\Phi_n(z \,|\, \lambda)$, which is feasible since it follows from 
Theorem \ref{thm:asymp} that this function has no zeroes close to the boundary circle. 
In \S 4, we show that 
the resulting normalised kernels are good approximate identities at the boundary
points, so that the classical convergence results hold for transforms of functions and
measures on $\partial \DD$; see Proposition \ref{pro:basic_convergence}.

\S  \ref{sec:DirRiq} is dedicated to another important issue: continuous extensions       
from boundary data. We first limit attention to $n=0$ and show that for the normalized kernel 
$P(z,\xi \,|\, \lambda)/\Phi_0(z \,|\, \lambda)$, the solution of the Dirichlet problem with continuous
boundary data is unique for any (even complex) $\lambda \in \C \setminus (-\infty\,,\,-\frac14)$
(Theorem \ref{thm:dirichlet}). Then we 
extend the result to $n>0$ by formulating a suitable version of the Riquier problem, 
adapted to the fact that the quotient of lower and higher order polyspherical functions 
tends to zero at the boundary, and provide such a solution (Corollary \ref{cor:Riq}), 
which is inherently non-unique.

\S \ref{sec:Fatou} answers another fundamental question on the asymptotic 
behaviour of $\lambda$-polyharmonic functions, the Fatou theorem. 
Theorem \ref{thm:Fatou} yields
 admissible non-tangential convergence of the normalised 
transforms of measures on $\partial \DD$ for $\lambda \in \C \setminus (-\infty\,,\,-\frac14\,]$.
For the critical value $\lambda = -\frac14$, we even have
 a wider approach region.  Along the classical guidelines, the proofs
are based on maximal inequalities.

The last \S \ref{sec:exa} is devoted to related examples 
(in the standard case $\lambda =0$), discussions and open questions. 
In particular, we provide all details of an example 
outlined to us by A. Borichev: a harmonic (indeed, analytic) function $h(z)$ 
such that $h(z)/R$ is bounded but has no  radial limits at the boundary, 
as $R \to \infty$ where $R = \rho(z,0) \sim \Phi_1(z|0)$, the biharmonic 
spherical function.
In all the paper, for the reader's benefit, we give most of the details of the (sometimes lengthy) 
computations.

\smallskip

\noindent
\textbf{Acknowledgements.} We acknowlege enlightening 
email conversations with
Alexander Borichev (Marseille), Fausto Di Biase (Pescara), Jean-Pierre Otal (Toulouse)
and Peter Sj\"ogren (G\"oteborg).

\section{Integral representation}\label{sec:derive}

For each $\delta > 0$, consider the space $\mathcal{H}(\AB_{\delta})$ of all
holomorphic functions on the open annulus
$$
\AB_{\delta} = \{ z \in \C : 1-\delta < |z| < 1+\delta \}\,.
$$
The space is equipped with the topology of uniform convergence on
compact sets. The space $\mathcal{H}(\partial \DD)$ of analytic functions on 
the unit circle consists of all functions $g: \partial \DD \to \C$
which possess an extension in $\mathcal{H}(\AB_{\delta})$ for some 
$\delta = \delta(g) > 0$. The topology on  $\mathcal{H}(\partial \DD)$
is the inductive limit of the topologies of $\mathcal{H}(\AB_{\delta})$ 
as $\delta \to 0$.

\begin{dfn}\label{def:functional} An \emph{analytic functional} $\nu$ on
$\partial \DD$ is an element of the dual space of $\mathcal{H}(\partial \DD)$.
We write
$$
\int_{\partial \DD}g\, d\nu := \nu(g)\,,\quad 
g \in \mathcal{H}(\partial \DD)\,.
$$
\end{dfn}
A good way to understand the action of $\nu$ on $\mathcal{H}(\partial \DD)$ is 
described in \cite[p. 114]{Ey}, see {\sc K\"othe}~\cite{Ko}: let 
$\nu_n = \nu(e^{-\im n \phi})$, $n \in \Z$ be the Fourier coefficients of $\nu$.  
Then 
$$
\limsup_{|n| \to \infty} |\nu_n|^{1/|n|} \geqslant 1
$$
(and this characterises the analytic functionals). If $g \in \mathcal{H}(\partial \DD)$
then the Fourier expansion 
$
g(e^{\im \phi}) = \sum_{n \in \Z} g_n \, e^{\im n \phi}
$
is such that 
$$
\limsup_{|n|\to \infty} |g_n|^{1/|n|} < 1. 
$$
Then 
\begin{equation}\label{eq:nuaction}
 \int_{\partial \DD}g\, d\nu = \sum_{n \in \Z} g_n \, \overline {\nu\,}_{\!\!n}\,.
\end{equation}
For more on analytic functionals, resp. hyperfunctions, see e.g. 
{\sc H\"ormander}~\cite[Chapter IX]{Hoe} or {\sc Schlichtkrull}~\cite{Schl}.

It will be useful to write 
$$
P(z,\xi \,|\, \lambda) = P(z,\xi)^{\mus(\lambda)+1/2}.
$$
The following results from an elementary and well-known computation.

\begin{lem}\label{lem:eigen} For $\lambda \in \C$ 
and $\xi \in \partial \DD$, the function $z \mapsto P(z,\xi \,|\, \lambda)$
satisfies
$$
\Lap P(\,\cdot\,,\xi \,|\, \lambda) = \lambda\, P(\,\cdot\,,\xi \,|\, \lambda)\,.
$$  
\end{lem}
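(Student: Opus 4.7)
The plan is to reduce the identity to a direct computation in complex coordinates, exploiting the factorization $s(s-1) = \lambda$ where $s := \mus(\lambda) + 1/2$. Indeed, $\mus^2 = \lambda + 1/4$ gives $s(s-1) = (\mus+1/2)(\mus-1/2) = \mus^2 - 1/4 = \lambda$, so it suffices to verify
\[
\Lap P(\cdot,\xi)^s = s(s-1)\, P(\cdot,\xi)^s
\]
for a generic complex exponent $s$. I would write $\Lap = \tfrac{(1-|z|^2)^2}{4}\cdot 4\partial_z\partial_{\bar z}$ and use the standard logarithmic-derivative identity $\Delta f = f\bigl(\Delta \log f + |\nabla \log f|^2\bigr)$, which is valid for any positive smooth $f$ and reduces the problem to computing the Laplacian and squared gradient of $u := \log P(z,\xi) = \log(1-|z|^2) - \log|\xi - z|^2$.

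First I would split $u$ into the two summands. The function $\log|\xi-z|^2 = \log(\xi-z) + \log(\bar\xi - \bar z)$ is pluriharmonic on $\DD$ and contributes nothing to $\Delta u$. For the first summand, a direct computation with $\partial_{\bar z}\partial_z \log(1-z\bar z) = -1/(1-|z|^2)^2$ gives
\[
\Delta u = \frac{-4}{(1-|z|^2)^2}.
\]
Second, I would compute the complex gradient
\[
\partial_z u = \frac{-\bar z}{1-|z|^2} + \frac{1}{\xi-z} = \frac{1 - \bar z \xi}{(1-|z|^2)(\xi - z)},
\]
and observe the algebraic identity $|1 - \bar z\xi| = |\xi - z|$ for $|\xi|=1$ (since $\bar\xi(1-\bar z\xi) = \bar\xi - \bar z$). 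This collapses $|\partial_z u|^2$ to $1/(1-|z|^2)^2$, hence $|\nabla u|^2 = 4|\partial_z u|^2 = 4/(1-|z|^2)^2$.

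Combining, for $f = P^s$ one gets
\[
\frac{\Delta f}{f} = s^2 |\nabla u|^2 + s\,\Delta u = \frac{4(s^2 - s)}{(1-|z|^2)^2},
\]
and multiplying by $(1-|z|^2)^2/4$ yields $\Lap P^s = s(s-1)\,P^s = \lambda P^s$. There is no real obstacle here; the computation is entirely elementary and the only mildly clever point is the identity $|1-\bar z \xi| = |\xi - z|$ which produces the exact cancellation between the hyperbolic conformal factor and the Euclidean gradient of the Poisson kernel. A brief remark should handle the case in which one prefers to interpret $P^s$ for general complex $s$ via $\exp(s \log P)$, noting that $P(z,\xi) > 0$ on $\DD$ makes this unambiguous.
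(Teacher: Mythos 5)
Your computation is correct: the factorization $s(s-1)=\lambda$ with $s=\mus(\lambda)+\tfrac12$, the vanishing of $\partial_z\partial_{\bar z}\log|\xi-z|^2$, the value $\partial_z\partial_{\bar z}\log(1-|z|^2)=-1/(1-|z|^2)^2$, and the cancellation $|1-\bar z\xi|=|\xi-z|$ for $|\xi|=1$ all check out, and the chain-rule identity $\Delta e^{su}=e^{su}(s^2|\nabla u|^2+s\,\Delta u)$ is valid for complex $s$ and real $u$, so the argument is complete. The paper itself offers no proof of this lemma, dismissing it as "an elementary and well-known computation"; however, the route it implicitly takes is visible in the proof of Lemma \ref{lem:diff}: by rotation invariance one reduces to $\xi=1$, passes to the upper half-plane and then to the logarithmic model, where the Poisson kernel at $\im\infty$ is simply $e^w$ and the Laplacian is $e^{2w}\partial_u^2+\partial_w^2-\partial_w$, so that $\Lap\, e^{(\mus+1/2)w}=\bigl((\mus+\tfrac12)^2-(\mus+\tfrac12)\bigr)e^{(\mus+1/2)w}=\lambda\, e^{(\mus+1/2)w}$ in one line (indeed Lemma \ref{lem:eigen} is the case $f\equiv 1$ of Lemma \ref{lem:diff}). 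Your approach trades that change of coordinates for a direct Wirtinger-derivative computation in the disk model; it is entirely self-contained and makes explicit the identity $|\partial_z\log P|^2=1/(1-|z|^2)^2$ that drives the cancellation, whereas the paper's model change pays off later by handling the more general kernels $f(-\hor)\,P^{\mus+1/2}$ with the same setup.
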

We note here that we can write
\begin{equation}\label{eq:extendedP}
P(z,\xi) = \frac{1-|z|^2}{(\xi - z)(1/\xi - \bar z)}\,. 
\end{equation}
In this form, for fixed $z \in \DD$ and any $\lambda \in \C$, 
the function $\xi \mapsto P(z,\xi \,|\, \lambda)$ is in $\mathcal{H}(\AB_{\delta})$ for
$\delta = 1-|z|$. Thus, as a function of $\xi$ in the unit circle, it is in 
$\mathcal{H}(\partial \DD)$. We now recall an important result.

\begin{theorem}[{\sc Helgason} \cite{He}, {\cite[Section V.6]{Hel}}]\label{thm:helgason}
For any $\lambda \in \C$, 
every $\lambda$-harmonic
function $h$ for the hyperbolic Laplacian on the Poincar\'e disk has a
unique representation
$$
h(z) = \int_{\partial \DD} P(z,\xi \,|\, \lambda)\, d\nu(\xi)\,,
$$
where $\nu=\nu^h$ is an analytic functional on $\partial \DD$.
\end{theorem}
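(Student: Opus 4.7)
My plan is to exploit the rotational invariance of $\Lap$ via a Fourier decomposition. Writing $z = re^{\im\phi}$ and expanding $h(re^{\im\phi}) = \sum_{n\in\Z} h_n(r)\,e^{\im n\phi}$, the rotational invariance of $\Lap$ shows that each mode $h_n(r)\,e^{\im n\phi}$ is itself $\lambda$-harmonic. Substituting into $\Lap h = \lambda h$ and using \eqref{eq:hypLap} reduces the eigenvalue problem to the second-order ODE
$$
h_n''(r) + \tfrac{1}{r}\,h_n'(r) - \tfrac{n^2}{r^2}\,h_n(r) \;=\; \frac{4\lambda}{(1-r^2)^2}\,h_n(r).
$$
The Frobenius indicial equation at $r=0$ has roots $\pm|n|$, so up to a scalar there is a unique solution $\phi_n(r)$ regular at the origin (growing like $r^{|n|}$), while the second solution blows up like $r^{-|n|}$. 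Since any $\lambda$-harmonic function on $\DD$ is real-analytic, we are forced to $h_n(r) = c_n\,\phi_n(r)$ for constants $c_n\in\C$.

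The same expansion applies to the Poisson kernel. By the rotational symmetry visible in \eqref{eq:extendedP},
$$
P(re^{\im\phi},\,e^{\im\psi}\,|\,\lambda) \;=\; \sum_{n\in\Z} p_n(r,s)\,e^{\im n(\phi-\psi)},\qquad s=\mus(\lambda),
$$
and each $p_n(r,s)$ solves the same ODE with the same regularity at $0$, hence $p_n(r,s) = \gamma_n(s)\,\phi_n(r)$ for explicit constants $\gamma_n(s)$ given by beta integrals (equivalently, evaluations of $\,{}_2F_1$). A short computation with $\Gamma$-functions shows that $\gamma_n(s)\neq 0$ for every $n\in\Z$ and every $s$ in the admissible half-plane $\Re s\geq 0$. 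For an analytic functional $\nu$ with Fourier coefficients $\nu_n=\nu(e^{-\im n\psi})$, the pairing rule \eqref{eq:nuaction} gives
$$
\int_{\partial\DD} P(z,\xi\,|\,\lambda)\,d\nu(\xi) \;=\; \sum_{n\in\Z} \gamma_n(s)\,\overline{\nu_n}\,\phi_n(r)\,e^{\im n\phi}.
$$
Matching Fourier modes against the expansion of $h$ forces the unique choice $\overline{\nu_n}=c_n/\gamma_n(s)$, which in particular already yields the uniqueness half of the theorem: if the integral vanishes, then $\gamma_n(s)\phi_n(r)\not\equiv 0$ forces $\nu_n=0$ for every $n$ and hence $\nu=0$.

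The nontrivial remaining step is to verify that the candidates $\{\nu_n\}$ so defined actually assemble into a bona fide analytic functional on $\partial\DD$. This is the Paley-Wiener matching: one needs the asymptotics $|\gamma_n(s)|^{1/|n|}\to 1$ as $|n|\to\infty$, obtained from the integral representation of $\gamma_n(s)$ (or from Stirling applied to its $\Gamma$-quotient), combined with the growth bound $\limsup |c_n|^{1/|n|}\leq 1$ forced on any $\lambda$-harmonic function on $\DD$ by Cauchy estimates applied on disks of radius $r\nearrow 1$. Together these furnish exactly the growth condition characterising analytic functionals through their Fourier coefficients.

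The principal obstacle is precisely this Paley-Wiener/asymptotic step, which is the essential classical content of the theorem. The sketch above is the Fourier-theoretic route through the explicit hypergeometric formula $\phi_n(r)\propto r^{|n|}\,{}_2F_1(\tfrac12+s,\,\tfrac12-s;\,|n|+1;\,r^2)$ and the corresponding evaluation for $\gamma_n(s)$. Helgason's original proof reaches the same conclusion more conceptually via the representation theory of $G=SU(1,1)$, its Iwasawa decomposition, and the Harish-Chandra $c$-function for the spherical principal series, which organises both the $\gamma_n(s)$ and the requisite asymptotic matching.
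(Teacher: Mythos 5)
The paper does not actually prove this statement: it is imported verbatim as Helgason's theorem, with \cite{He}, \cite{Hel} for the original and \cite{Ey} for a readable proof, so there is no internal argument to compare yours against. What you have written is, in outline, exactly the Fourier-analytic proof one finds in Eymard: decompose into rotational modes, observe that regularity at the origin pins down each radial mode up to a scalar, expand the kernel in the same modes, and close with a Paley--Wiener matching of growth rates. The skeleton is sound, and the facts you defer are all true: with $\phi_n(r)\sim r^{|n|}$ at the origin one computes from the binomial expansion of $(1-r\xi)^{-(\mus+1/2)}(1-r\bar\xi)^{-(\mus+1/2)}$ that $\gamma_n(\mus)=(\mus+\tfrac12)_{|n|}/|n|!\,$, which is nonzero because $\Re(\mus)+\tfrac12\geqslant\tfrac12$ excludes the poles of the Pochhammer symbol, and Stirling gives $|\gamma_n(\mus)|^{1/|n|}\to 1$; the bound $\limsup|c_n|^{1/|n|}\leqslant 1$ follows from $|c_n|\leqslant \sup_{|z|=r}|h(z)|/\phi_n(r)$ together with $\phi_n(r)^{1/|n|}\to r$ for each fixed $r<1$. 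Two small blemishes: for $n=0$ the second Frobenius solution is logarithmic rather than $r^{-|n|}$ (harmless, since uniqueness of the regular solution still holds), and the pairing formula needs the observation that $\xi\mapsto P(z,\xi\,|\,\lambda)$ lies in $\mathcal{H}(\AB_\delta)$ with Fourier coefficients decaying like $r^{|n|}$, so that the series against a functional of subexponential coefficient growth converges absolutely.

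That said, as submitted this is an outline and not a proof: you explicitly name the asymptotic matching as ``the principal obstacle'' and do not carry it out, and that step is precisely the mathematical content of the theorem (it is the disk avatar of the Harish-Chandra $c$-function asymptotics). If the goal is a self-contained proof rather than a reduction to the classical literature, you must supply the computation of $\gamma_n(\mus)$, its nonvanishing on $\Re(\mus)\geqslant 0$, the two root-asymptotics above, and the verification that the resulting series converges locally uniformly on $\DD$ to $h$. None of these is difficult, but until they are written the existence half of the theorem is not established; only the uniqueness half is genuinely complete in your sketch.
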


A very readable proof of this and several related results
are contained in the beautiful expository paper by {\sc Eymard}~\cite{Ey}.

\begin{rem}\label{rem:nonneg}
If the $\lambda$-harmonic function $h$ is positive real, then $\lambda \geqslant -\frac14$. 
Indeed, it is well known that positive $\lambda$-harmonic functions exist precisely when
$\lambda \geqslant -\frac14$; see e.g. {\sc Sullivan}~\cite[Thm. 2.1]{Sul}. 

For non-negative $h$, the functional $\nu^h$ is a non-negative Borel measure. This 
follows from general Martin boundary theory, see e.g.
{\sc Karpelevi\v{c}}~\cite{Karp} or {\sc Taylor}~\cite{Tay}.
\end{rem}

Before proving Theorem \ref{thm:poly}, we need to find suitable 
polyharmonic versions of the Poisson kernel. That is, for each $n \in \N_0\,$, 
we want to have a kernel of the form 
\begin{equation}\label{eq:ansatz}
P_n(z,\xi \,|\, \lambda) = g_{n,\lambda}\bigl(-\hor(z,\xi)\bigr)\, P(z,\xi \,|\, \lambda)
\end{equation}
which satisfies 
\begin{equation}\label{eq:reduce}
(\Lap - \lambda \, I)^n P_n(z,\xi \,|\, \lambda) = P(z,\xi \,|\, \lambda)\,.
\end{equation}

For this purpose, we shall use the following.

\begin{lem}\label{lem:diff}
Let $f \in C^2(\R)$ and set 
$$
Q_f(z,\xi \,|\, \lambda) = f\bigl(-\hor(z,\xi)\bigr)\, P(z,\xi \,|\, \lambda)\,.
$$
Then 
$$
(\Lap - \lambda \, I)Q_f(z,\xi \,|\, \lambda) = Q_g(z,\xi \,|\, \lambda)\,,
$$
where $g = f'' + 2\mus\, f'$ and $\mus = \mus(\lambda)\,$.
\end{lem}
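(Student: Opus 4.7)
The plan is to compute $\Lap Q_f$ via the Leibniz rule, use Lemma \ref{lem:eigen} to dispatch the $\Lap P(\,\cdot\,,\xi\,|\,\lambda)$ contribution, and reduce what remains to two classical identities for the Busemann function.

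Set $h(z) = -\hor(z,\xi)$, so that $P(z,\xi\,|\,\lambda) = e^{(\mus+1/2)\,h(z)}$ and $Q_f(z,\xi\,|\,\lambda) = f(h(z))\,P(z,\xi\,|\,\lambda)$. The Euclidean Leibniz rule, multiplied by $(1-|z|^2)^2/4$, gives
$$
\Lap Q_f \;=\; \bigl(\Lap f(h)\bigr)\,P(\,\cdot\,,\xi\,|\,\lambda) \;+\; f(h)\,\Lap P(\,\cdot\,,\xi\,|\,\lambda) \;+\; \tfrac{(1-|z|^2)^2}{2}\,\nabla f(h)\cdot\nabla P(\,\cdot\,,\xi\,|\,\lambda).
$$
By Lemma \ref{lem:eigen}, the middle summand equals $\lambda\,Q_f$ and cancels against the $-\lambda I$ part. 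Writing $\nabla P(\,\cdot\,,\xi\,|\,\lambda) = (\mus+\tfrac12)\,P(\,\cdot\,,\xi\,|\,\lambda)\,\nabla h$ and applying the chain rule $\Lap f(h) = f''(h)\,|\nabla h|^2_{\mathrm{hyp}} + f'(h)\,\Lap h$, where $|\,\cdot\,|^2_{\mathrm{hyp}}$ denotes the squared hyperbolic norm, we obtain
$$
(\Lap-\lambda I)\,Q_f \;=\; \Bigl(f''(h)\,|\nabla h|^2_{\mathrm{hyp}} \;+\; f'(h)\bigl[\Lap h + (2\mus+1)\,|\nabla h|^2_{\mathrm{hyp}}\bigr]\Bigr)\,P(\,\cdot\,,\xi\,|\,\lambda).
$$

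Everything now rests on the two Busemann-function identities $|\nabla \hor(\,\cdot\,,\xi)|^2_{\mathrm{hyp}} \equiv 1$ and $\Lap \hor(\,\cdot\,,\xi) \equiv 1$. The first is obtained by a short direct computation from $\hor(z,\xi) = \log\bigl(|z-\xi|^2/(1-|z|^2)\bigr)$; the second then follows from $\hor = -\log P(\,\cdot\,,\xi)$ and the harmonicity of $P$ via the logarithmic-derivative identity $\Lap\log u = \Lap u / u - |\nabla\log u|^2_{\mathrm{hyp}}$. Substituting $|\nabla h|^2_{\mathrm{hyp}}=1$ and $\Lap h = -1$ collapses the coefficient of $f'(h)$ to $-1+(2\mus+1) = 2\mus$, so
$$
(\Lap-\lambda I)\,Q_f \;=\; \bigl(f''(h) + 2\mus\, f'(h)\bigr)\,P(\,\cdot\,,\xi\,|\,\lambda) \;=\; Q_g,
$$
as required. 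The only real obstacle is bookkeeping the conformal factor $(1-|z|^2)^2/4$ as it converts the three Euclidean ingredients in the Leibniz expansion into their hyperbolic counterparts; once the Busemann identities are invoked the coefficients match automatically, consistent with the relation $(\mus+\tfrac12)(\mus-\tfrac12) = \mus^2-\tfrac14 = \lambda$ that underlies Lemma \ref{lem:eigen}.
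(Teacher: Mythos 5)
Your argument is correct, but it takes a genuinely different route from the paper. The paper's proof uses rotation invariance to reduce to $\xi=1$ and then changes coordinates twice --- first to the upper half plane, then to the ``logarithmic model'' $(u,w)$ with $w=\log v$, where $\Lap = e^{2w}\partial_u^2+\partial_w^2-\partial_w$, $\hor=-w$ and $P=e^w$ --- so that the claim collapses to the one-line computation $(\partial_w^2-\partial_w-\lambda)\bigl[f(w)e^{(\mus+1/2)w}\bigr] = \bigl[f''+2\mus f'\bigr]e^{(\mus+1/2)w}$, using $(\mus+\tfrac12)^2-(\mus+\tfrac12)=\lambda$. You instead stay on the disk and run the Riemannian Leibniz and chain rules, isolating the two Busemann identities $|\nabla\hor|^2_{\mathrm{hyp}}\equiv 1$ and $\Lap\hor\equiv 1$ as the crux; both identities are correct and your derivations of them (direct computation from $\hor=\log\bigl(|\xi-z|^2/(1-|z|^2)\bigr)$, then the logarithmic-derivative identity together with harmonicity of $P$) check out, as does the final bookkeeping $-1+(2\mus+1)=2\mus$. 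What the paper's approach buys is that the two Busemann identities become invisible --- they are built into the coordinates --- at the cost of two changes of variables; what your approach buys is a coordinate-free argument that makes explicit exactly which geometric facts about the horocycle foliation drive the result, and which would transfer verbatim to other rank-one settings. The one point worth tightening in a written version is consistency of gradient conventions: you pass between the Euclidean dot product with the conformal factor $(1-|z|^2)^2/2$ and the hyperbolic norm $|\cdot|^2_{\mathrm{hyp}}$ in the same display, which is correct but should be stated once cleanly, e.g.\ by fixing $\langle\nabla F,\nabla G\rangle_{\mathrm{hyp}}=\tfrac{(1-|z|^2)^2}{4}\,\nabla F\cdot\nabla G$ at the outset.
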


\begin{proof} Here (and frequently also later) we shall use the fact 
that the Busemann function, hence also the Poisson kernel (as well as the Laplacian), 
are rotation invariant:
\begin{equation}\label{eq:rotation}
\hor(e^{\im  \alpha} z, e^{\im  \alpha} \xi) = \hor(z,\xi) \quad \text{for all }\; z \in \DD\,,\; 
\xi \in \partial \DD.
\end{equation}
Thus, it is sufficient to consider $\xi =1$. Furthermore, from the Poincar\'e disk model
of the hyperbolic plane we can first pass to the upper half plane model via the inverse 
Cayley transform, where in the new coordinates $(u,v)$ with $u \in \R$ and $v > 0$, the 
hyperbolic Laplacian transforms into $v^2 (\partial^2 u + \partial^2 v)$ and the boundary
point $1 \in \partial \DD$ becomes $\im\infty$. Then we make one more change of variables,
setting $w = \log v$ to obtain the \emph{logarithmic model,} where now $(u,w) \in \R^2$ and
the hyperbolic Laplacian becomes
\begin{equation}\label{eq:logmodel}
\Lap = e^{2w} \partial^2_u + \partial^2_w - \partial_w\,.
\end{equation}
In these coordinates, the Busemann function and Poisson kernel at $\im\infty$ are
$$
\hor\bigl((u,w), \im\infty\bigr) = -w \AND P\bigl((u,w), \im\infty\bigr) = e^w\,,
$$
and
$$
Q_f((u,w), \im\infty|\lambda\bigr) = f(w) \, e^{(\mus+1/2)w}\,.
$$
The statement now follows by applying the Laplacian in the form of \eqref{eq:logmodel}.
\end{proof}

\begin{pro}\label{pro:polykernel}
For $\lambda \ne -\frac14$ and $\mus = \mus(\lambda)$, the kernel
$$
P_n(z,\xi \,|\, \lambda) = \frac{1}{n!(2\mus)^n}\,\bigl(-\hor(z,\xi)\bigr)^n\, P(z,\xi)^{\mus +1/2}
$$
satisfies \eqref{eq:reduce}.

For $\lambda = -\frac14$, where $\mus(\lambda)=0$,  identity \eqref{eq:reduce}
holds for
$$
P_n(z,\xi|-\frac14) = \frac{1}{(2n)!}\,\hor(z,\xi)^{2n}\, P(z,\xi)^{1/2}.
$$
\end{pro}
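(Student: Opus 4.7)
The plan is to apply Lemma \ref{lem:diff} iteratively. Write $Q_f(z,\xi \,|\, \lambda) = f\bigl(-\hor(z,\xi)\bigr)\, P(z,\xi \,|\, \lambda)$ and set $Lf := f'' + 2\mus f'$, so that Lemma \ref{lem:diff} reads $(\Lap - \lambda I) Q_f = Q_{Lf}$. A straightforward induction on $n$ then yields $(\Lap - \lambda I)^n Q_f = Q_{L^n f}$. Both candidate kernels in the proposition are of the form $P_n = Q_{f_n}$ for an explicit one-variable profile, namely $f_n(t) = t^n/(n!(2\mus)^n)$ when $\lambda\ne-\frac14$ and $f_n(t) = t^{2n}/(2n)!$ when $\lambda=-\frac14$. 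Hence identity \eqref{eq:reduce} reduces to the purely one-variable claim $L^n f_n \equiv 1$.

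For $\lambda \ne -\frac14$, write $L = D^2 + 2\mus D$ with $D = d/dt$. The two summands commute, so the binomial theorem gives
$$
L^n = \sum_{k=0}^n \binom{n}{k} (2\mus)^k D^{2n-k}.
$$
Applied to $t^n$, every term with $k < n$ involves $D^{2n-k}$ with $2n-k>n$ and therefore vanishes; only the $k=n$ term contributes, giving $(2\mus)^n D^n t^n = (2\mus)^n\, n!$. Dividing by the prefactor $n!(2\mus)^n$ in the definition of $f_n$ yields $L^n f_n = 1$, as desired.

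In the critical case $\mus = 0$ (i.e., $\lambda = -\frac14$), the operator degenerates to $L = D^2$, and so $L^n = D^{2n}$ annihilates any polynomial of degree less than $2n$; this is precisely why the profile now has to be of degree $2n$. With $f_n(t) = t^{2n}/(2n)!$ one gets $L^n f_n = D^{2n} t^{2n}/(2n)! = 1$, and the identity $(-\hor)^{2n} = \hor^{2n}$ lets us drop the minus sign so that the resulting kernel matches the one in the statement. I do not anticipate any real obstacle: once Lemma \ref{lem:diff} is in hand the whole argument is bookkeeping, and the key insight -- that in the binomial expansion of $L^n$ exactly one term contributes to $L^n t^n$, while all others are killed by degree counting -- makes the verification essentially a one-liner.
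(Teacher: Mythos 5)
Your proof is correct, and it shares with the paper the essential reduction: both use Lemma \ref{lem:diff} to turn the identity \eqref{eq:reduce} into a statement about the one-variable operator $L = D^2 + 2\mus D$ acting on a polynomial profile. Where you diverge is in how that one-variable statement is handled. The paper constructs the profiles recursively: at each step it solves the ODE $f_n'' + 2\mus f_n' = g_{n-1}$ for a particular polynomial solution and then argues that the terms of degree $k<n$ may be discarded because they are already annihilated by $(\Lap-\lambda I)^{k+1}$ (by the induction hypothesis), so only the leading coefficient $A_{n,n}$ matters. You instead verify the closed formula directly: since $D^2$ and $2\mus D$ commute, $L^n = \sum_{k=0}^n \binom{n}{k}(2\mus)^k D^{2n-k}$, and degree counting kills every term except $k=n$ when applied to $t^n$, giving $L^n t^n = (2\mus)^n\, n!$ in one line. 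Your version is cleaner and avoids the paper's slightly indirect "lower-order terms are annihilated" step; the paper's recursive formulation has the mild advantage of explaining where the profile $w^n/(n!(2\mus)^n)$ comes from rather than just checking it, and of treating the two cases uniformly as ODE integrations (the critical case being "integrate twice, keep the top power"). Your handling of $\mus=0$, where $L$ degenerates to $D^2$ and the degree must double, is exactly right, as is the remark that $(-\hor)^{2n}=\hor^{2n}$ reconciles the sign with the statement.
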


\begin{proof}
In order to find a function $g_{n,\lambda}$ as in \eqref{eq:ansatz}, we start of course with 
$g_{0,\lambda} \equiv 1$. We proceed recursively, 
looking at each step for a function $f_n = f_{n,\lambda}$ such that
\begin{equation}\label{eq:recur}
(\Lap - \lambda \, I) \Bigl[ f_{n,\lambda}\bigl(-\hor(z,\xi)\bigr)\, P(z,\xi \,|\, \lambda)\Bigr] 
= g_{n-1,\lambda}\bigl(-\hor(z,\xi \,|\, \lambda)\bigr)\, P(z,\xi \,|\, \lambda)\,.
\end{equation}
The function $f_n$ will then be replaced by the simpler $g_n = g_{n,\lambda}$ 
which satisfies 
\eqref{eq:ansatz}
before proceeding to $n+1$.
By Lemma \ref{lem:diff}, $f_n$ must solve the differential equation
\begin{equation}\label{eq:equadiff}
f_n'' + 2\mus\, f_n' = g_{n-1}\,,\quad \mus = \mus(\lambda).
\end{equation} 
The characteristic polynomial of \eqref{eq:equadiff} has roots $0$ and $-2\mus$, when $\lambda \ne -\frac14$.
In the latter case, $0$ is a double root.

\smallskip

We start with $\lambda \ne -\frac14$ and $n=1$. Since $g_0=1$, we are 
looking for a special solution of \eqref{eq:equadiff} of the form 
$f_1(w) = A_{1,1}w$, whence $A_{1,1} = 1/( 2\mus)$.  
We get $g_1(w)=f_1(w) = w/( 2\mus)$, and going back to the disc model, 
we obtain $P_1$ via \eqref{eq:ansatz}, as proposed. 
We now prove by induction on $n$ that by setting $g_n(w) = \dfrac{w^{n}}{n!(2\mus)^n}$
we obtain a solution for $P_n\,$. 

Suppose this is true for all orders up to $n-1$. The right hand side of \eqref{eq:equadiff}
is a polynomial of order $n-1$ in $w$. Hence there is  a special solution of the form
$f_n(w) = \sum_{k=1}^n A_{n,k} w^k\,$.  The coefficients $A_{n,k}$ are obtained as
solutions of a system of linear equations, yielding a solution 
of \eqref{eq:recur}.
However, by the induction hypothesis, the terms of order $k < n$ are 
annihilated when applying
$(\Lap - \lambda \, I)^n$ to 
$f_{n,\lambda}\bigl(-\hor(z,\xi)\bigr)\, P(z,\xi \,|\, \lambda)$, so that 
for \eqref{eq:reduce} we only need $g_n(w) = A_{n,n}w^n$.
Inserting $f_n$ into \eqref{eq:equadiff}, comparison of the highest order coefficients 
yields  $\;2\mus n = \dfrac{w^{n-1}}{(n-1)!(2\mus)^{n-1}}$, which completes the
induction step.

\smallskip

When $\lambda = -\frac14$, the differential equation \eqref{eq:equadiff} simplifies to
$f_n'' =  g_{n-1}\,$. Here, we set $g_n = f_n\,$. Starting with $g_0 \equiv 1$, we integrate twice 
at each step  and take just the highest appearing power: $g_1(w) = w^2/2\,$, $g_2(w) = w^4/4!$, and so on, so that $g_n(w) = w^{2n}/(2n)!$, as proposed. 
\end{proof}

Note that when $\lambda = -\frac14$, we even have $(\Lap - \lambda \, I) P_n(z,\xi|-\frac14) 
= P_{n-1}(z,\xi|-\frac14)$. 

In the standard case $\lambda = 0$, we just write
$P_n(z,\xi)$ for $P_n(z,\xi|0)$.

\begin{proof}[{\bf Proof of Theorem \ref{thm:poly}}]
Along with the Poisson kernel, also the function $\xi \mapsto P_n(z,\xi \,|\, \lambda)$
is in $\mathcal{H}(\AB_{\delta})$ for $\delta = 1-|z|$, for every $n \in \N_0$, $\lambda \in \C$
and $z \in \DD$.

We claim that every 
$\lambda$-polyharmonic function $f$ of order $n$ has a unique representation
of the form 
\begin{equation}\label{eq:poly}
f(z) = \sum_{k=0}^{n-1} f_k(z) \quad\text{with}\quad 
f_k(z) = \int_{\partial \DD} P_k(z,\xi \,|\, \lambda)\,d
\nu_k(\xi)\,,
\end{equation}
where  $\nu_0, \dots, \nu_{n-1}$ are analytic functionals on $\partial \DD$.
Furthermore, when $\lambda \geqslant -\frac14$ is real, $\nu_k$ is a non-negative Borel measure 
if and only if 
$$
(\Lap - \lambda \, I)^k g_k \geqslant 0\,,\quad\text{where}\quad 
g_k=f - (f_{n-1} + \dots + f_{k+1})\,.
$$ 
To prove this, we proceed by induction on $n$. For $n=1$, this is Theorem \ref{thm:helgason}. 
Suppose the statement is true for $n-1$.  Let $f$ be $\lambda$-polyharmonic of order $n$.
Then $h = (\Lap - \lambda \, I)^{n-1}f$ is $\lambda$-harmonic. By Theorem
\ref{thm:helgason}, there is a unique analytic functional $\nu_{n-1}$ on $\partial \DD$ such
that
$$
h(z) = \int_{\partial \DD} P(z,\xi \,|\, \lambda)\,d\nu_{n-1}(\xi)\,,
$$
We set 
$$
f_{n-1}(z) = \int_{\partial \DD} P_{n-1}(z,\xi \,|\, \lambda)\,d\nu_{n-1}(\xi)\,.
$$
By Proposition \ref{pro:polykernel}, 
$$
(\Lap - \lambda \, I)^{n-1}f_{n-1} = h = (\Lap - \lambda \, I)^{n-1}f\,.
$$
Thus, $f - f_{n-1}$ is $\lambda$-polyharmonic of order $n-1$, and we can apply the
induction hypothesis to that function in order to get the representation of $f$.
Uniqueness follows from Helgason's Theorem \ref{thm:helgason}. 
The statement on non-negativity for real $\lambda \geqslant -\frac14$ is a 
consequence of
the well known Poisson-Martin representation theorem for positive $\lambda$-harmonic
functions. Since \eqref{eq:poly} differs from the proposed result only 
by the normalisation of the kernels $P_n\,$, the result is proved.
\end{proof}

The previous paper \cite{PW} on trees adopts a different 
method of proof, that could also be used here: it consists in differentiating
$P(z,\xi \,|\, \lambda)$ with respect to $\lambda$ instead of integrating a differential
equation with respect to $z$, which is inherent in the proof of Proposition \ref{pro:polykernel}. The drawback is that this
does not work at the critical value $\lambda = -\frac14$, while on the other hand,
it can be applied to more general domains as long as $\lambda$ belongs to
the $L^2$-resolvent set of the underlying Laplacian.

\emph{From now on, we use the representation formula \eqref{eq:poly} for 
$\lambda$-polyharmonic functions, instead of the one of the statement of
Theorem \ref{thm:poly}.} 

\section{Polyspherical functions}\label{sec:poly}

\begin{dfn}\label{def:polysph}
 For $\lambda \in \C$ and $n \in \N_0\,$, 
 the $n^{\textrm{th}}$
 \emph{$\lambda$-polyspherical funtion} is 
 $$
 \Phi_n(z \,|\, \lambda) = 
 \int_{\partial \DD}  P_n(z,\xi \,|\, \lambda)\,d\xi\,,
 \quad z \in \DD\,,
 $$
where 
$P_n$ is given by Proposition \ref{pro:polykernel}
and 
$d\xi = d\leb(\xi)$ for the normalized Lebesgue measure $\leb$ on the unit circle. 
\end{dfn}

The function $\Phi_n(z \,|\, \lambda)$ is $\lambda$-polyharmonic of order 
$n+1$ and rotation invariant, 
i.e., it depends only on $|z|$.
For $n = 0$, we recover the classical spherical functions 
$\Phi(z \,|\, \lambda)$, where we omit the index $0\,$: for $z=r\,e^{i\phi} \in \DD\,$,
\begin{equation}\label{eq:spherical}
 \begin{aligned}
\Phi(z \,|\, \lambda) 
&=  \int_{\partial \DD}  P(z,\xi \,|\, \lambda)\,d\xi =  \int_{\partial \DD}  
 P(z,\xi)^{\mus(\lambda) +1/2}\,d\xi  \\[3pt] 
 &= \frac1{2\pi} 
 \int_{-\pi}^\pi \Bigl(\frac{1-r^2}{1+r^2 - 2r\cos \phi}\Bigr)^{\mus(\lambda)+1/2}d\phi \,.
\end{aligned}
\end{equation}
In particular, $\Phi(\,\cdot\, |\,0) \equiv 1$.
The following is immediate from Proposition \ref{pro:polykernel}.

\begin{lem} For any $n \in \N\,$ and $\lambda \in \C$ and $\xi \in \partial\DD\,$,
$$
(\Lap - \lambda \, I)^n \,\Phi_n(z \,|\, \lambda) = \Phi(z \,|\, \lambda)\,.
$$
In the specific case $\lambda = -\frac14$ one even has
$$
(\Lap - \lambda \, I) \,\Phi_n(z\,|\,-\! \tfrac14) = \Phi_{n-1}(z\,|\,-\! \tfrac14)\,.
$$
\end{lem}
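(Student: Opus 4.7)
The plan is to push the operator $(\Lap - \lambda\,I)^n$ under the integral sign in the definition of $\Phi_n(z\,|\,\lambda)$ and then invoke Proposition \ref{pro:polykernel} pointwise in $\xi$.  Concretely, I would proceed as follows.

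First, fix $\lambda \in \C$ and note that from the explicit formulas in Proposition \ref{pro:polykernel}, the function $(z,\xi) \mapsto P_n(z,\xi\,|\,\lambda)$ is smooth in $z \in \DD$ and, for each fixed $z$, lies in $\mathcal{H}(\AB_\delta)$ with $\delta = 1-|z|$ (as already remarked for the Poisson kernel via \eqref{eq:extendedP}, and the factor $\hor(z,\xi)^n$ shares this analyticity in $\xi$).  In particular, any finite number of $z$-derivatives of $P_n(\,\cdot\,,\xi\,|\,\lambda)$ are jointly continuous on $K \times \partial \DD$ for every compact $K \subset \DD$, and hence uniformly bounded there.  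This standard dominated-convergence type argument justifies
\[
(\Lap - \lambda\, I)^n \Phi_n(z\,|\,\lambda)
 = \int_{\partial \DD} (\Lap - \lambda\, I)^n P_n(z,\xi\,|\,\lambda)\,d\xi\,,
\]
where the operator on the right acts in the $z$ variable.

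Next, Proposition \ref{pro:polykernel} says that the integrand on the right equals $P(z,\xi\,|\,\lambda)$.  Hence
\[
(\Lap - \lambda\, I)^n \Phi_n(z\,|\,\lambda)
 = \int_{\partial \DD} P(z,\xi\,|\,\lambda)\,d\xi
 = \Phi(z\,|\,\lambda)\,,
\]
by Definition \ref{def:polysph} for $n=0$ and \eqref{eq:spherical}.  This gives the first identity.

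For the special value $\lambda = -\tfrac14$, the remark immediately following the proof of Proposition \ref{pro:polykernel} gives the pointwise step-down $(\Lap - \lambda I)P_n(z,\xi\,|\,-\tfrac14) = P_{n-1}(z,\xi\,|\,-\tfrac14)$, so the same differentiation-under-the-integral argument with just one application of $\Lap - \lambda\,I$ yields $(\Lap - \lambda\,I)\Phi_n(z\,|\,-\tfrac14) = \Phi_{n-1}(z\,|\,-\tfrac14)$.  The only mildly delicate point in the whole argument is the interchange of derivatives and integral, and this is straightforward because $\partial \DD$ is compact and $P_n$ is jointly smooth in $z$ on $\DD$; no further obstacle is expected.
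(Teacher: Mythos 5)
Your proposal is correct and follows exactly the route the paper intends: the paper states this lemma as ``immediate from Proposition \ref{pro:polykernel}'', i.e., one applies $(\Lap-\lambda I)^n$ under the integral sign in Definition \ref{def:polysph} and uses the pointwise identity \eqref{eq:reduce} (and, for $\lambda=-\frac14$, the step-down remark following that proposition). You merely make explicit the justification for interchanging differentiation and integration, which is indeed the only point needing any comment.
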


\begin{rem}\label{rem:zeroes} For 
$\lambda \in \C \setminus (-\infty\,,\,-\frac14)$, the spherical function
$\Phi(r \,|\, \lambda)$ has no zeroes in $[0\,,\,1)$, while for real 
$\lambda \in (-\infty\,,\,-\frac14)$, 
$\Phi(r \,|\, \lambda)$ has countably many zeroes which accumulate at $1$.

For real $\lambda \in [-\frac14\,,\,\infty)$, this is obvious because the integrand in 
\eqref{eq:spherical} is positive. For the other values of $\lambda$, 
to the best of our knowledge, it seems that these facts are nowhere 
referred to in the relevant literature on the hyperbolic Laplacian and its
spherical functions. Therefore, we add some explanations. 

Expressing $\Lap$ in polar coordinates leads to the differential equation
$$
\frac{(1-r^2)^2}{4}\, \Phi''(r \,|\, \lambda) + 
\frac{(1-r^2)^2}{4r}\, \Phi'(r \,|\, \lambda) 
- \lambda\, \Phi(r \,|\, \lambda)=0
$$
in the variable $r \in [0\,,1)$. We now substitute $z = \dfrac{1+r^2}{1-r^2}$
and write $\Psi(z \,|\, \lambda) = \Phi(r \,|\, \lambda)$. Then the above differential 
equation transforms into 
$$
(z^2-1)\, \Psi''(z \,|\, \lambda) + 
2z\, \Psi'(z \,|\, \lambda) - \lambda \, \Psi(z \,|\, \lambda) =0\,.
$$
One sees that the latter is Legendre's differential equation;  
see {\sc Hobson}~\cite{Hobson} and, in particular, {\sc Hille}~\cite{Hi1919}, 
\cite[equations (1) and (2)]{Hi1923}.
Recall that the Legendre function $P_a$ with parameter $a \in \C$ solves 
$(1-z^2)P_a''(z) - 2z P'_a(z) + a(a+1) P_a(z)=0$, and
$$
P_a(z) = F\left(a+1,-a;1;\;\;\frac{1-z}{2}\right),
$$ 
where $F$ is Gauss' hypergeometric function, 
and in our case, $a(a+1)=\lambda$.
We get  
$$
\Phi(r \,|\, \lambda) = P_{a}(z) \quad \text{with}\quad  a= -\frac12 + \mus(\lambda)\AND
z = \frac{1+r^2}{1-r^2} \in [1\,,\infty).
$$
Compare with \cite[identity (25)]{Ey}. Note that the hyperbolic Laplacian
in \cite{Ey} is 4 times the one we are using here, and that $\Phi(z \,|\, \lambda)$ 
is the function
$\varphi_0(z,\mu)$ of \cite{Ey}, with $\mu= \tfrac12 + \mus(\lambda)$.
Note also that there are several known identities
between hypergeometric functions with different parameters. For example,  
{\sc Grellier and Otal}~\cite{GrOt} use a different version, which coincides with
the one given above, see e.g. {\sc Lebedev}~\cite[p. 200]{Lebedev}.

It follows from old work of  {\sc Mehler}~\cite{Meh} and is explained in 
\cite[pages 27--28, identity (35)]{Hi1919} 
that for $a = -\frac12 + b\,\im$ with $b \ne 0$,
that is, for $\lambda  \in (-\infty\,,\,-\frac14)$, the function $P_a(z)$ 
has countably many zeroes in $[1\,,\infty)$ which are such that the zeroes of 
$\Phi(r\,|\,\lambda)$ accumulate at $1$. On the other hand, it is comprised in 
\cite[Theorem V and the page preceding it]{Hi1923}, 
that with $\Re(a) \ne -1/2$ the function $P_a(z)$ has no
zeroes in $[1\,,\infty)$.\footnote{We thank Jean-Pierre Otal (Toulouse) for pointing 
us to the books
by Lebedev and Hobson, which led us to the PhD thesis of Hille \cite{Hi1919} and its 
follow-up \cite{Hi1923}. We also acknowledge an exchange with Peter~Sj\"ogren (G\"oteborg)
on the question 
how well this issue of zeroes of the spherical functions is known in the community 
 -- apparently not at all.}
\hss \qed
\end{rem}

For any fixed $\lambda \in \C \setminus (-\infty\,,\,-\frac14)$, we we shall need the asymptotic 
behaviour of $\Phi_n(z \,|\, \lambda)$ when
$|z| \to 1$, that is, when 
$$
\rho(z,0) = \log \frac{1+|z|}{1-|z|} \to \infty\,.
$$
In view of Remark \ref{rem:zeroes}, we do not consider real $\lambda \in (-\infty\,,\,-\frac14)$ 
because of the infinity of zeroes of $\Phi(r \,|\, \lambda)$.

In the sequel, we shall adopt the following convention: in the formulas where $0 < r < 1$ 
(resp. $z \in \DD$ with $|z| = r$) appear,  we always use capital $R= \rho(r,0) = \rho(z,0)$, 
and we do the same in case of subscripts like $r_k \leftrightarrow R_k\,$.

\begin{theorem}\label{thm:asymp} We have the following, as
$R =  \rho(z,0) \to \infty\,$.
\\[3pt]
{\bf (A)\;} If $\lambda \in \C \setminus (-\infty\,,-\frac14\,]$ then
$$
\Phi_n(z \,|\, \lambda) \sim \frac{c(\lambda)}{n!\,\bigl(2\mus(\lambda)\bigr)^n}\, R^n \, 
\exp\Bigl( \bigl( \mus(\lambda) - 1/2 \bigr)R\Bigr)\,, 
$$
where $c(\lambda) \ne 0$. 
\\[3pt]
{\bf (B)\;} If $\lambda =-\frac14$ then
$$
\Phi_n(z \,|\, \lambda) 
\sim  \frac{2}{(2n+1)!\,\pi}\, R^{2n+1} \, \exp( -R/2 )\,. 
$$
In particular, for every $n \geqslant 1$ and 
$\lambda \in \C\setminus (-\infty\,,\,-\frac14)$ there is $0 < r_{n,\lambda} < 1$ such that
$$
\Phi_n(z \,|\, \lambda) \ne 0 \quad \text{for all }\; z \in \DD\;\text{ with }\;|z| 
\geqslant r_{n,\lambda}\,.
$$
For $n=0$, we  set $r_{0,\lambda}=0$.
\end{theorem}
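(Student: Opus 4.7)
By rotation invariance \eqref{eq:rotation}, it suffices to evaluate $\Phi_n(r\,|\,\lambda)$ for $r\in[0,1)$ real. The plan is to substitute $t=\tan(\phi/2)\cdot(1+r)/(1-r)$ in the defining integral over $\xi=e^{\im\phi}$. A routine computation using $e^R=(1+r)/(1-r)$ yields
$$
P(r,e^{\im\phi}) = \frac{e^R + e^{-R}t^2}{1+t^2}, \qquad P^{\mus+1/2}\,\frac{d\phi}{2\pi} = \frac{(e^R+e^{-R}t^2)^{\mus-1/2}}{\pi\,(1+t^2)^{\mus+1/2}}\,dt,
$$
together with $-\hor(r,e^{\im\phi})=\log P=R-\log(1+t^2)+\log(1+e^{-2R}t^2)$. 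Hence, by Proposition \ref{pro:polykernel}, each $\Phi_n(r\,|\,\lambda)$ rewrites as an explicit integral over $\R$ suitable for asymptotic analysis as $R\to\infty$.

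For case (A), $\Re(\mus)>0$, I would apply dominated convergence after normalising the integrand by $R^{-n}e^{-(\mus-1/2)R}$. Pointwise both $(e^R+e^{-R}t^2)^{\mus-1/2}e^{-(\mus-1/2)R}$ and $R^{-n}(-\hor)^n$ converge to $1$, while the crude bound $|\log P|\leqslant R+\log(1+t^2)+\log 2$ on $|t|\leqslant e^R$ (with a separate exponentially small tail estimate on $|t|>e^R$) furnishes the integrable majorant $C_n\,(1+\log(1+t^2))^n/(1+t^2)^{\Re(\mus)+1/2}$. Since the beta integral $\int_\R dt/(1+t^2)^{\mus+1/2}=\sqrt{\pi}\,\Gamma(\mus)/\Gamma(\mus+1/2)$ is nonzero throughout $\Re(\mus)>0$, setting $c(\lambda):=\Gamma(\mus)/\bigl(\sqrt{\pi}\,\Gamma(\mus+1/2)\bigr)$ delivers the stated equivalent.

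The principal difficulty is case (B): when $\mus=0$ the limit integral above diverges logarithmically, dominated convergence fails, and the $R^n$-factor must be upgraded to $R^{2n+1}$. Here I plan to substitute $t=\sinh u$ (so that $dt/\sqrt{1+t^2}=du$) and then rescale $u=Rv$. On $|v|<1$ the bracket reduces to $R(1-2|v|)+o(1)$ uniformly on compacta while the density $(1+e^{-2R}\sinh^2 u)^{-1/2}$ tends to $1$; on $|v|>1$ the bracket stabilises to $-R+O(1)$ but the density decays like $e^{-R(|v|-1)}$, so the outer region contributes only $O(R^{2n}e^{-R/2})$. The dominant inner contribution is
$$
\frac{R^{2n+1}e^{-R/2}}{(2n)!\,\pi}\int_{-1}^{1}(1-2|v|)^{2n}\,dv \;=\; \frac{2\,R^{2n+1}e^{-R/2}}{(2n+1)!\,\pi}.
$$
The main technical obstacle is uniform control of the rescaled bracket across the transition zones $|v|\approx 0$ and $|v|\approx 1$, where the asymptotic expansion of $\log\cosh$ breaks down; I would handle this by separate direct estimates on shrinking neighborhoods, together with a Vitali-type argument to pass to the limit under the integral.

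The non-vanishing assertion is then immediate from both asymptotics: the prefactors $c(\lambda)/(n!(2\mus)^n)$ and $2/((2n+1)!\pi)$ and the exponential factors are all nonzero, so $\Phi_n(z\,|\,\lambda)\ne 0$ whenever $|z|$ is sufficiently close to $1$, supplying the threshold $r_{n,\lambda}$. For $n=0$, Remark \ref{rem:zeroes} already excludes zeros throughout $\DD$ whenever $\lambda\notin(-\infty,-\tfrac14)$, so $r_{0,\lambda}=0$ is admissible.
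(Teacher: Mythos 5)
Your proposal is correct, and it reaches the same constants as the paper by a genuinely different route. The paper works on $[0,\pi/2]$ with $P_r(\phi)/P_r(0)=\bigl(1+\tau^2\sin^2(\phi/2)\bigr)^{-1}$, splits the integral at an $r$-dependent cutoff ($\tau^{-a}$ in case (A), $1/\log\tau$ in case (B)), kills the outer piece by direct estimates, and rescales the inner piece ($x=\tau\phi$, resp.\ $x=\log(1+\tau^2\phi^2)/\log\tau$, the latter reducing to $\int_0^{2}(1-x)^{2n}\,dx$). You instead make the exact global substitution $t=e^R\tan(\phi/2)$, which turns $\Phi_n$ into an integral over $\R$ with a closed-form integrand, and then argue by dominated convergence; in case (B) the further substitutions $t=\sinh u$, $u=Rv$ produce the limit integral $\int_{-1}^{1}(1-2|v|)^{2n}\,dv=2/(2n+1)$, matching the paper's $R^{2n+1}$ growth. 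Two remarks. First, your identification $c(\lambda)=\Gamma(\mus)/\bigl(\sqrt{\pi}\,\Gamma(\mus+1/2)\bigr)$ is a genuine improvement: the paper only asserts $c(\lambda)\neq 0$, which for complex $\mus$ is not self-evident from the integral $\frac{2}{\pi}\int_0^\infty(1+x^2)^{-\mus-1/2}dx$, whereas the Gamma-quotient form makes it immediate. Second, your worry about the transition zones in case (B) is easily dispatched and no Vitali-type argument is needed: since $e^{-R}\leqslant P(r,\xi)\leqslant e^{R}$ one has $|\log P|\leqslant R$ everywhere, so $\bigl(\log P/R\bigr)^{2n}\leqslant 1$, and the density $\bigl(1+e^{-2R}\sinh^2(Rv)\bigr)^{-1/2}$ is bounded by $1$ for $|v|\leqslant 2$ and by $2e^{1-|v|}$ for $|v|>2$ (when $R\geqslant1$); this is an $R$-independent integrable majorant, and the pointwise limits $(1-2|v|)^{2n}$ on $|v|<1$ and $0$ on $|v|>1$ hold for all $v\notin\{0,\pm1\}$, so plain dominated convergence finishes case (B). With that simplification your argument is complete and, in my view, somewhat cleaner than the cutoff decomposition in the paper.
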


\begin{proof} 
For the parameter $0<r < 1$, the even function 
\begin{equation}\label{eq:Prphi}
P_r(\phi) = P(r,e^{\im\phi}) = \frac{1-r^2}{1+r^2 - 2r\cos \phi}\,,\quad 
\phi \in [-\pi\,,\pi]
\end{equation}
is strictly decreasing in $\phi \in [0\,,\pi]$. It attains its maximum
in $0$ with value $P_r(0)= (1+r)/(1-r) =  e^R$.  We  write 
\begin{equation}\label{eq:taur}
\frac{P_r(\phi)}{P_r(0)} = \frac{1}{1 +  \tau^2 \sin^2(\phi/2)} \quad \text{with}\quad
\tau = \tau_r = \frac{2\sqrt r}{1-r} \sim P_r(0)= e^R  \;\text{ as }\; r \to 1\,.
\end{equation}
\\[3pt]
\emph{Proof of} {\bf (A)}. In this case, $\Re(\mus) > 0$, where $\mus = \mus(\lambda)$. Then
$$
\begin{aligned}
\frac{n!\,(2\mus)^n\,\Phi_n(r \,|\, \lambda)} {e^{(\mus + 1/2)R}\, R^n}
  &= \frac{1}{2\pi} \int_{-\pi}^{\pi} 
      \frac{P_r(\phi)^{\mus + 1/2} \, \bigl(\log P_r(\phi) \bigr)^n}
           {P_r(0)^{\mus + 1/2} \, \bigl( \log P_r(0) \bigr)^n} \, d\phi \\             
           &= \frac{2}{\tau \pi} \int_{0}^{\pi/2} 
           \frac{\tau}{(1 + \tau^2\sin^2 \phi)^{\mus + 1/2}}
      \biggl(\frac{R - \log\bigl(1 + \tau^2 \sin^2 \phi\bigr)}{R} \biggr)^n 
 \, d\phi\,. 
\end{aligned}
$$
We choose $0 < a < \dfrac{2\Re (\mus)}{2\Re (\mus)+1}$ and decompose the last integral
into the two parts
\begin{center}$\displaystyle \int_0^{\tau^{-a}}$ plus $\displaystyle \int_{\tau^{-a}}^{\pi/2}$.
\end{center} 
As for the second integral,
\begin{equation}\label{eq:second}
\biggl|\frac{R - \log\bigl(1 + \tau^2 \sin^2 \phi\bigr)}{R} \biggr| \leqslant 1
 \end{equation}
Furthermore,
\begin{equation}\label{eq:tautoinfty}
\begin{aligned}
(\tau^2 \sin^2 \phi)^{\Re(\mus)+1/2}  &\geqslant (\tau^2 \sin^2 \tau)^{\Re(\mus)+1/2}\\ 
&\sim \tau^{(1-a)(2\Re(\mus)+1)}
\quad \text{for }\;\phi \in [\tau^{-a}\,,\pi/2]\; \text{ as }\; \tau \to \infty\,,
\end{aligned}
\end{equation}
and we see that the second integral tends to $0$ by the choice of $a$. 

As for the first integral in the decomposition, 
as $\tau$, hence $R$, tend to infinity, we have
\begin{equation}\label{eq:sin}
(1 + \tau^2\sin^2 \phi)^{\mus + 1/2} \sim (1 + \tau^2\phi^2)^{\mus + 1/2}  \AND 
\frac{R - \log\bigl(1 + \tau^2 \sin^2 \phi\bigr)}{R} \sim 1- \frac{1 + \tau^2\phi^2}{\log \tau}\,.
\end{equation}
Therefore, with the substitution $x = \tau\phi$, by dominated convergence as 
$\tau \to \infty$, we obtain
\begin{align}\label{eq:clambda}
\frac{2}{\pi}\int_0^{\tau^{-a}} \frac{\tau}{(1 + \tau^2\sin^2 \phi)^{\mus + 1/2}}
 &\biggl(\frac{R - \log\bigl(1 + \tau^2 \sin^2 \phi\bigr)}{R} \biggr)^n 
 \nonumber \\
 &\sim  \frac{2}{\pi}\int_0^{\tau^{-a}} \frac{\tau}{(1 + \tau^2\phi^2)^{\mus + 1/2}}
 \biggl(1-\frac{\log\bigl(1 + \tau^2 \phi^2\bigr)}{\log \tau} \biggr)^n  \, d\phi 
 \nonumber \\
 &= \frac{2}{\pi}\int_0^{\tau^{1-a}} \frac{1}{(1 + x^2)^{\mus + 1/2}}
 \biggl(1-\frac{\log\bigl(1 + x^2\bigr)}{\log\tau}\biggr)^n \, dx \nonumber \\ 
 &\to
 \frac{2}{\pi}\int_0^{\infty} \frac{1}{(1 + x^2)^{\mus + 1/2}}
 \, dx = c(\lambda),
 \end{align}
recalling that $\mus = \mus(\lambda)$. This proves \textbf{(A)}.
\\[6pt]
\emph{Proof of} {\bf (B)}. In this case, $\mus = 0$, and
\begin{equation*}
\begin{aligned}
\frac{(2n)!\,\Phi_n(r|-\tfrac{1}{4})} {e^{R/2}\, R^{2n}}
           &\sim \frac{2}{\pi}\, \int_{0}^{\pi/2} \frac{1}{(1 + \tau^2\sin^2 \phi)^{1/2}}
 \biggl(1 - \frac{\log\bigl(1 + \tau^2 \sin^2 \phi\bigr)}{\log \tau}\biggr)^{2n} 
 \, d\phi\,. 
\end{aligned}
\end{equation*}
This time, we decompose the last integral into the two parts
\begin{center}
$\displaystyle \int_0^{1/\log\tau}$ plus $\displaystyle \int_{1/\log\tau}^{\pi/2}$. 
\end{center}
By using the analogue of \eqref{eq:sin},  the first integral is asymptotically
equivalent to 
$$
\int_0^{1/\log\tau} \frac{1}{(1 + \tau^2\,\phi^2)^{1/2}}
 \biggl(1 - \frac{\log\bigl(1 + \tau^2\,\phi^2\bigr)}{\log \tau}  \biggr)^{2n}\,d\phi \quad 
 \text{as }\; \tau \to \infty\,.
$$
We now substitute  $x = \dfrac{\log\bigl(1 + \tau^2\,\phi^2\bigr)}{\log \tau}$, and
observe that the upper integration limit $1/\log \tau$ transforms into 
$$
b_{\tau} = \frac{\log\bigl(1 + (\tau/\log \tau)^2\bigr)}{\log \tau} \sim 2\,,\quad 
\text{as }\; \tau \to \infty\,.
$$
Thus, the latter integral becomes
$$
\frac{\log \tau}{2\tau} \int_0^{b_{\tau}} (1-x)^{2n} \frac{dx}{\bigl(1- \tau^{-x}\bigr)^{1/2}}
\sim \frac{\log \tau}{\tau} \cdot \frac{1}{2n+1}\,.
$$
By \eqref{eq:second}, the second integral is bounded by
$$
\int_{1/\log \tau}^{\pi/2} \frac{d\phi}{\tau \sin\phi} 
=  \frac{1}{\tau}\Bigl(- \log\, \tan \frac{1}{2\log\tau} \Bigr) 
\sim \frac{\log \tau}{\tau}  \cdot \frac{\log(2\log\tau)}{\log \tau}
\,,\quad 
\text{as }\; \tau \to \infty\,.
$$
This leads to the asymptotic behaviour of \textbf{(B)}.
\end{proof}

For $\lambda \in \C \setminus (-\infty\,,-\frac14\,]$, we define the \emph{associated
real eigenvalue} $\lambda^* \in [-\frac14\,,+\infty)$ by the equation
\begin{equation}\label{eq:assoc-lambda}
\mus(\lambda^*) = \Re \bigl(\mus(\lambda)\bigr). 
\end{equation}

We shall also need the function defined as
\begin{equation}\label{eq:absPhi}
|\Phi|_n(z \,|\, \lambda) =\int_{\partial \DD}  \bigl|P_n(z,\xi \,|\, \lambda)\bigr|\,d\xi\,, \quad z \in \DD\,.
\end{equation}
The same computations as in the proof of Theorem \ref{thm:asymp} with this 
modified integrand lead to the following.
\begin{pro}\label{pro:absasymp} 
We have the following. \\[3pt]
{\bf (A)\;} If $\lambda \in \C \setminus (-\infty\,,-\frac14\,]$ and $\mus = \mus(\lambda)$ then,
as $R =  \rho(z,0) \to \infty\,$
$$
|\Phi|_n(z \,|\, \lambda) 
\sim C_n(\lambda) \, R^n \, \exp\Bigl( \bigl( \Re(\mus) - 1/2 \bigr)R\Bigr)\,,$$
as $\;R =  \rho(z,0) \to \infty\,$, where 
$\; C_n(\lambda) = \dfrac{c(\lambda^*)}{n!\,(2|\mus|)^n}\;$
with $c(\lambda^*)$ according to \eqref{eq:clambda}. 
\\[3pt]
{\bf (B)\;} If $\lambda =-\frac14$ then 
$\;
|\Phi|_n(z \,|\, \lambda) = \Phi_n(z \,|\, \lambda)\;$
with asymptotics given by Theorem \ref{thm:asymp}.B.
\end{pro}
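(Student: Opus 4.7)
The plan is to reproduce the computation in the proof of Theorem \ref{thm:asymp}, incorporating two modifications throughout: replacing $\mus + 1/2$ by $\Re(\mus) + 1/2$ in the exponent of the Poisson kernel, and placing absolute values around the $n$-th power of the logarithmic factor. These changes reflect the identity
\[
|P_n(z,\xi\,|\,\lambda)| = \frac{1}{n!\,(2|\mus|)^n}\,|\hor(z,\xi)|^n\,P(z,\xi)^{\Re(\mus)+1/2},
\]
valid because $\hor(z,\xi) \in \R$ and $P(z,\xi) > 0$; this identity also explains the factor $(2|\mus|)^n$ in the denominator of $C_n(\lambda)$.

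For part (B), where $\mus(-\tfrac14)=0$, the kernel $P_n(z,\xi\,|\,-\tfrac14)$ is already a product of non-negative real factors (an even power of the real-valued $\hor$, and the positive function $P^{1/2}$), so $|P_n| = P_n$ and the claim follows at once from Theorem \ref{thm:asymp}.B. For part (A), I would retrace \S \ref{sec:poly} with the identity above, obtaining
\[
\frac{n!\,(2|\mus|)^n\,|\Phi|_n(r\,|\,\lambda)}{e^{(\Re(\mus)+1/2)R}\,R^n}=\frac{2}{\tau \pi}\int_0^{\pi/2} \frac{\tau\, d\phi}{(1+\tau^2\sin^2\phi)^{\Re(\mus)+1/2}}\Bigl|\frac{R-\log(1+\tau^2\sin^2\phi)}{R}\Bigr|^n,
\]
and then split the integral at $\phi = \tau^{-a}$ with $0 < a < 2\Re(\mus)/(2\Re(\mus)+1)$. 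The bound \eqref{eq:second} already involves an absolute value, so the estimate \eqref{eq:tautoinfty} for the tail $\int_{\tau^{-a}}^{\pi/2}$ transfers verbatim and shows that this portion vanishes. On $[0,\tau^{-a}]$, after the substitution $x = \tau\phi$, dominated convergence gives the limit
\[
\frac{2}{\pi}\int_0^\infty \frac{dx}{(1+x^2)^{\Re(\mus)+1/2}} = c(\lambda^*),
\]
where the equality uses $\mus(\lambda^*) = \Re(\mus(\lambda))$ from \eqref{eq:assoc-lambda} together with \eqref{eq:clambda}. Combining with the prefactor $1/\tau \sim e^{-R}$ produces the stated asymptotic.

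The step requiring the most care is justifying dominated convergence in the presence of the absolute value. Concretely, one must control $|1-\log(1+x^2)/\log\tau|^n$ uniformly in $\tau$ on the truncated range $x \in [0,\tau^{1-a}]$. This should be routine: on that range $\log(1+x^2)/\log\tau$ is bounded above by $2(1-a)+o(1)$, so the absolute-valued expression is bounded by a $\tau$-independent constant and tends pointwise to $1$; and the dominating function $(1+x^2)^{-\Re(\mus)-1/2}$ is integrable on $[0,\infty)$ since $\Re(\mus)>0$. Every other estimate in the original proof depends only on the absolute value of the logarithmic ratio, hence carries over unchanged.
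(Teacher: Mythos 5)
Your proposal is correct and is exactly what the paper does: the paper's ``proof'' of this proposition is the one-line remark that the same computations as in Theorem \ref{thm:asymp} with the modified integrand give the result, and you have supplied precisely those computations, correctly identifying that $|P_n(z,\xi\,|\,\lambda)|=\frac{1}{n!\,(2|\mus|)^n}|\hor(z,\xi)|^n P(z,\xi)^{\Re(\mus)+1/2}$ and that the limit constant becomes $c(\lambda^*)$ via \eqref{eq:assoc-lambda}. The domination step you flag is indeed harmless, since \eqref{eq:second} already bounds the absolute value of the logarithmic ratio by $1$ on the whole range.
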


Regarding the zeroes of the higher order polyspherical functions,
we have $P_n(0,\xi \,|\, \lambda) = \Phi_n(0\,|\,\lambda) = 0$ for all $\lambda \in \C$ and 
$n \geqslant 1$. For the following, recall that
$$
J_n = \frac{1}{2\pi} \int_{-\pi}^{\pi} (\cos \phi)^n \, d\phi =
\begin{cases}  \displaystyle 
               \frac{1}{2^n}{n \choose n/2}\,,&\text{if $n$ is even,}\\[.4cm]
               0\,,&\text{if $n$ is odd.}              
\end{cases}
$$
\begin{lem}\label{lem:at0} Let $n \geqslant 1$. Then we have the following as $r \to 0$.
\\[3pt]
{\bf (A)\;} If $\lambda \in \C \setminus \{-\frac14\}$ and $\mus = \mus(\lambda)$, 
$$
\Phi_n(r \,|\, \lambda) \sim 
\begin{cases} \dfrac{1}{\bigl(\frac{n}{2}!\bigr)^2\,(2\mus)^n}\,r^n,
              &\text{if $n$ is even,}\\[15pt] 
              \dfrac{1}{\frac{n+1}{2}! \,\frac{n-1}{2}! \,(2\mus)^{n-1}}\,r^{n+1},
              &\text{if $n$ is odd.}              
\end{cases}
$$
\\[3pt]
{\bf (B)\;} If $\lambda =-\frac14$ then
$$
\Phi_n(r\,|\, \lambda) 
\sim  \frac{1}{(n!)^2}\, r^{2n}.   
$$
\end{lem}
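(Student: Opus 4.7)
The plan is to Taylor-expand the integrand defining $\Phi_n(r\,|\,\lambda)$ around $r=0$ and integrate term by term over the unit circle, exploiting the fact that the normalised integrals $J_m$ of $\cos^m\phi$ vanish for odd $m$. Setting $y = y(\phi,r) := -\hor(r,e^{\im\phi}) = \log P(r,e^{\im\phi})$ and expanding each of the two logarithms appearing in \eqref{eq:Prphi} yields
$$
y = 2r\cos\phi - 2r^2\sin^2\phi + O(r^3) \quad\text{as } r \to 0.
$$

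In case (A), set $s = \mus + 1/2$; the integrand in Definition \ref{def:polysph}, apart from the prefactor $1/(n!(2\mus)^n)$, equals $y^n e^{sy}$. Combining the two leading contributions of $y^n$ with the two leading terms of $e^{sy}$, and using $\sin^2\phi = 1-\cos^2\phi$ to merge them, one finds
$$
y^n e^{sy} = (2r\cos\phi)^n + 2^n\bigl[(n+2s)\cos^{n+1}\phi - n\cos^{n-1}\phi\bigr]\,r^{n+1} + O(r^{n+2}).
$$
For even $n$ the $r^n$ term survives integration, giving $\frac{1}{2\pi}\int_{-\pi}^{\pi}(2r\cos\phi)^n\,d\phi = 2^n J_n r^n = \binom{n}{n/2}r^n$, so division by $n!(2\mus)^n$ yields $r^n\big/\bigl((n/2)!^2(2\mus)^n\bigr)$, as claimed. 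For odd $n$ the $r^n$ contribution integrates to zero, so the $r^{n+1}$ correction becomes dominant; writing $n=2m+1$ and using $J_{n+1} = 2^{-2m-2}\binom{2m+2}{m+1}$, $J_{n-1}= 2^{-2m}\binom{2m}{m}$ together with $\binom{2m+2}{m+1} = \tfrac{2(2m+1)}{m+1}\binom{2m}{m}$, the bracketed sum (times $2^n$) simplifies, using $2s = 2\mus+1$ so that $(n+2s)-2(m+1) = 2\mus$, to $2\mus\cdot n!\,\big/\,\bigl(\tfrac{n+1}{2}!\,\tfrac{n-1}{2}!\bigr)$; one factor of $2\mus$ then cancels against the normalisation, producing the stated coefficient of $r^{n+1}$.

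In case (B), where $\mus = 0$, the integrand is $y^{2n}e^{y/2}$ with prefactor $1/(2n)!$. Since $2n$ is even, the leading term $(2r\cos\phi)^{2n}$ already integrates to $2^{2n}J_{2n}r^{2n} = \binom{2n}{n}r^{2n}$, which after normalisation gives $r^{2n}/(n!)^2$, as asserted. The main obstacle is the combinatorial bookkeeping in the odd subcase of (A), where one must track two distinct contributions to the $r^{n+1}$ coefficient and compress binomial expressions to the factorial form in the statement; all Taylor expansions involved are otherwise elementary once one identifies $-\hor$ with $\log P$.
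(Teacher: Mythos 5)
Your proposal is correct and follows essentially the same route as the paper: expand $\log P(r,e^{\im\phi})=2r\cos\phi+2r^2(\cos^2\phi-1)+O(r^2)$ and the factor $P^{\mus+1/2}$ (which you write as $e^{sy}$, the paper expands $P^{\mus+1/2}=1+(2\mus+1)r\cos\phi+\mathfrak o(r)$ directly -- the same computation), multiply out to order $r^{n+1}$, integrate using the vanishing of $J_m$ for odd $m$, and normalise; your combinatorial simplification in the odd subcase checks out. The only (minor) point worth making explicit is that the error terms are uniform in $\phi$, which justifies the term-by-term integration.
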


\begin{proof}
In the sequel all $\mathfrak{o}(r^k)$ are uniform in $\phi \in (-\pi\,,\,\pi]$,
as $r \to 0$. Short computations show that
$$
\begin{aligned} 
P(r,e^{\im \phi}) &= 1 + 2r\,\cos\phi + \mathfrak{o}(r) \AND\\
\log P(r,e^{\im \phi}) &= 2r\,\cos\phi + 2r^2 (\cos^2 \phi-1) + \mathfrak{o}(r^2)\,, 
\end{aligned}
$$
so that 
\begin{equation}\label{eq:expansion}
\begin{aligned} 
&P(r,e^{\im \phi})^{\mus + 1/2}\, \bigl(\log P(r,e^{\im \phi})\bigr)^n\\ 
&\qquad= \Bigl(1 + (2\mus +1)\,r\,\cos\phi + \mathfrak{o}(r)\Bigr)\\
&\qquad\quad \times 
\Bigl( 2^nr^n\,\cos^n\phi + n 2^{n+1}r^{n+1} (\cos^{n+1}\phi - \cos^{n-1} \phi)\Bigr) 
+ \mathfrak{o}(r^{n+1})\\
&\qquad= 2^nr^n\,\cos^n\phi + 2^nr^{n+1}\Bigl( (2\mus +1 +n) \cos^{n+1}\phi - n \,\cos^{n-1} \phi \Bigr)
+ \mathfrak{o}(r^{n+1})
\end{aligned}
\end{equation}
Applying $\frac{1}{2\pi} \int_{-\pi}^{\pi}$ with respect to $\phi$, using the above 
formula for $J_n\,$, and 
normalising as in  Proposition \ref{pro:polykernel}, we obtain 
the proposed asymptotic behaviour near~$0$.
\end{proof}

While we do not know much about the zeroes of $\Phi_n(r \,|\, \lambda)$ in general,
we have the following for real eigenvalues.

\begin{pro}\label{pro:positive}
For real $\lambda \geqslant -\frac14$ and all $n \in \N$, we have $\Phi_n(r \,|\, \lambda) > 0$
for $0 < r < 1$.
\end{pro}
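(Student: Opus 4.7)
The plan is to treat $\lambda = -\tfrac14$ and $\lambda > -\tfrac14$ separately. For $\lambda = -\tfrac14$ the argument is immediate from Proposition~\ref{pro:polykernel}: the integrand $\tfrac{1}{(2n)!}\hor(r,\xi)^{2n}\,P(r,\xi)^{1/2}$ defining $\Phi_n(r|-\tfrac14)$ is pointwise nonnegative, strictly positive outside the finite zero set of $\hor(r,\cdot)$ on $\partial\DD$, and not identically zero for $r > 0$, so integration against Lebesgue measure yields $\Phi_n(r|-\tfrac14) > 0$.

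For real $\lambda > -\tfrac14$, set $\mus = \mus(\lambda) > 0$ and exploit rotation invariance to reduce to $z = r \in (0,1)$ with $R = \rho(r,0)$. The identity $P_r(\phi) = (\cosh R - \sinh R \cos\phi)^{-1}$ (obtained from \eqref{eq:Prphi} by inserting $r = \tanh(R/2)$) suggests the substitution $v = -\log P_r(\phi) = \log(\cosh R - \sinh R\cos\phi)$, which monotonically maps $[0,\pi]$ onto $[-R,R]$. A direct Jacobian computation then rewrites the defining integral as
$$
\Phi_n(r\,|\,\lambda) \,=\, \frac{(-1)^n}{n!\,(2\mus)^n\,\pi}\int_{-R}^R v^n\,e^{-(\mus+1/2)v}\,f(v)\,dv, \qquad f(v) \,=\, \frac{e^v}{\sqrt{g(v)}},
$$
where $g(v) = (e^R - e^v)(e^v - e^{-R})$.

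The core of the argument is the algebraic identity $g(-v) = e^{-2v}g(v)$, which yields the asymmetry relation $f(-v) = e^{-v}f(v)$ on $(-R,R)$. Splitting $\int_{-R}^R = \int_0^R + \int_{-R}^0$ and substituting $v \mapsto -v$ in the second piece collapses the integral to
$$
\int_0^R v^n f(v)\, e^{-v/2}\bigl[e^{-\mus v} + (-1)^n e^{\mus v}\bigr]\,dv,
$$
which is $2\cosh(\mus v)$ times the remaining factors when $n$ is even, and $-2\sinh(\mus v)$ times them when $n$ is odd. After multiplication by the prefactor $(-1)^n/(n!(2\mus)^n\pi)$, the sign $(-1)^n$ cancels, leaving in both cases a positive constant times an integral of a strictly positive function on $(0,R)$, which proves $\Phi_n(r|\lambda) > 0$.

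The main obstacle is that for odd $n$ the original integrand $(\log P_r)^n P_r^{\mus + 1/2}$ changes sign on $\partial\DD$ (positive where $\cos\phi > r$, negative where $\cos\phi < r$), so pointwise positivity fails and some cancellation argument is unavoidable. The identity $g(-v) = e^{-2v}g(v)$ supplies exactly the quantitative imbalance between the pushforward measure on the positive and negative halves of the range of $-\log P_r$ required to dominate that sign change.
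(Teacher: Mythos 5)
Your proof is correct, and it takes a genuinely different route from the paper's. The paper handles the only nontrivial case (odd $n$, $\lambda>-\frac14$) softly: it observes that $F_n(\mus)=\int_{\partial\DD}(\log P)^n P^{\mus+1/2}\,d\xi$ satisfies $\frac{d}{d\mus}F_{2n+1}=F_{2n+2}>0$, so $F_{2n+1}$ is strictly increasing in $\mus\geqslant 0$, and then proves $F_{2n+1}(0)=0$ by induction, using Lemma \ref{lem:diff} to show that $f_{2n+1}$ is a radial $(-\frac14)$-eigenfunction vanishing at the origin, hence a zero multiple of $\Phi(\cdot\,|-\frac14)$. You instead compute everything explicitly: the substitution $v=-\log P_r(\phi)$ with Jacobian density $f(v)=e^v/\sqrt{g(v)}$, $g(v)=(e^R-e^v)(e^v-e^{-R})$, is correct (one checks $\sinh R\,\sin\phi=\sqrt{g(v)}$), the functional equation $g(-v)=e^{-2v}g(v)$, hence $f(-v)=e^{-v}f(v)$, holds, and folding the integral onto $[0,R]$ produces the factor $2\cosh(\mus v)$ or $2\sinh(\mus v)$ with the sign $(-1)^n$ cancelling against the prefactor, so strict positivity is manifest (the square-root singularity of $f$ at $v=\pm R$ is integrable, being just the original integral in new coordinates). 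Your argument is more elementary and self-contained -- it avoids the appeal to uniqueness of radial eigenfunctions -- and it yields an explicit positive integral representation; as a bonus, setting $\mus=0$ in your odd-$n$ formula recovers the paper's key fact $F_{2n+1}(0)=0$ at a glance, since $\sinh(0)=0$. The paper's argument is shorter to write and reuses machinery (Lemma \ref{lem:diff}) already established for other purposes.
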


\begin{proof} The statement is clear for even $n$, as well as for all $n$ when $\lambda = -\frac14$.
Fix $r > 0$ and consider 
$$
F_n(\mus) = \int_{\partial \DD} \bigl(\log P(r,\xi)\bigr)^n \, P(r,\xi)^{\mus +1/2}\, d\xi\,, 
$$
a function of $\mus \geqslant 0$.
For $\mus = \mus(\lambda)$, one has $\Phi_n(r \,|\, \lambda) = F_n(\mus)$ when 
$\lambda > -\frac14$. Then for $\mus > 0$
$$
\frac{d}{d\mus} F_{2n+1}(\mus) =  F_{2n+2}(\mus) > 0\,,
$$
so that $F_{2n+1}(\mus)$ is strictly increasing in $\mus \in [0\,,\,\infty)$.
To complete the proof it is enough to show that $F_{2n+1}(0)=0$, or equivalently,
that
$$
f_{2n+1}(z) = \int_{\partial \DD} \bigl(\log P(z,\xi)\bigr)^{2n+1} \, \sqrt{P(z,\xi)} \,d\xi = 0 \quad 
\text{for all }\; z \in \DD\,.
$$
We show this by induction on $n$. For $n =1$, Lemma \ref{lem:diff} yields that
$(\Lap + \frac14 I) f_1 = 0$ (recalling that $\mus =0$).
Since $f_1$ is radial, i.e., it depends only on $r = |z|$, it must be a constant multiple
of the spherical function $\Phi(z \,|\, -\frac14)$. Moreover, as $f_1(0) =0$, that 
constant factor must be $0$, that is, $f_1 \equiv 0$ on $\DD$. Now suppose that 
$n \geqslant 1$ and that we have already
that $f_{2n-1} \equiv 0$ on $\DD$. Once more, from Lemma \ref{lem:diff},
we get
$$
(\Lap + \tfrac14 I) f_{2n+1} = 2n(2n+1)\, f_{2n-1} \equiv 0\,
$$
so that also $f_{2n+1}$ must be a constant multiple of $\Phi(z \,|\, -\frac14)$. As above, we get
$f_{2n+1} \equiv 0$ on $\DD$.
\end{proof}

For $\lambda \in \C \setminus \R$, from Theorem \ref{thm:asymp} we know
at least that $\Phi_n(z \,|\, \lambda) \ne 0$ when $|z|$ is sufficiently close to $1$.

\medskip

\section{Polyharmonic kernels}\label{sec:kernels}

\begin{dfn}\label{def:Poistr}
Let $n \in \N_0\,$ and $\lambda \in \C$. 
The \emph{$n^{\text{th}}$ generalised Poisson transform} of an analytic functional (or measure) 
$\nu$ on $\partial \DD$
is the function 
$$
\Poiss_{n,\lambda}\nu(z) = \int_{\partial \DD}  P_n(z,\xi \,|\, \lambda)\,d\nu(\xi)\,,
\quad z \in \DD\,,
$$
where $P_n$ is given by Proposition \ref{pro:polykernel}.
If $\nu$ is an absolutely continuous measure with density function $g(\xi)$ with respect to 
the normalised Lebesgue measure $\leb$ on the circle,
then we write 
$\Poiss_{n,\lambda}g(z)$ for the resulting transform. When $n=0$, we just write 
$$
\Poiss_{\lambda} = \Poiss_{0,\lambda} \AND \Poiss = \Poiss_{0,0}\,.
$$
\end{dfn}

Our aim is to consider boundary value problems for polyharmonic functions.
The most basic one is the Dirichlet
problem: given a continuous function $g$ on $\partial \DD$, look for a harmonic function
$h$ on $\DD$ which provides a continuous extension of $g$ to the closed disk $\DD \cup \partial \DD$,
the hyperbolic compactification.  The solution is unique and well known, 
$h(z) = \Poiss g(z)\,$, the classical Poisson transform.

If we consider $\lambda$-harmonic functions with 
$0 \ne \lambda \in \C\setminus (-\infty\,,\,-\frac14)$, then  we cannot proceed in 
the same way, using $P(z,\xi)^{\mus(\lambda) + 1/2}$. Indeed, if we take 
$g \equiv 1$ on $\partial \DD$, then its $\lambda$-Poisson transform is
$h(z) = \Phi(z \,|\, \lambda)$, 
which tends to $0$ as $|z| \to 1$ when $\mus =0$ or $0 < \Re(\mus) < 1/2$, and to 
$\infty$ in absolute value when $\Re(\mus) > 1/2$.
It is well-known that in this case one should normalise, and the natural candidate is 
$\Phi(z \,|\, \lambda)$,
see e.g.~\cite{Mich} and \cite{KP}. (Note that this normalisation is not feasible when 
$\lambda \in (-\infty\,,\,-\frac14)$ because of the zeroes of the spherical function 
which accumulate at $\partial \DD$.)

\begin{dfn}\label{def:normalizedkernel}
For $n \in \N_0\,$ and $\lambda \in \C \setminus (-\infty\,,\,-\frac14)$, the 
\emph{normalised polyharmonic kernel} is
$$
\mathcal{K}_{n,\lambda}(z,\xi) = \frac{P_n(z,\xi \,|\, \lambda)}{\Phi_n(z \,|\, \lambda)}\,,
\quad \xi \in \partial \DD\,, \; z \in \DD\,,\;|z| \geqslant r_{n,\lambda}\,,
$$
where $r_{n,\lambda}$ is as in Theorem \ref{thm:asymp}.
\end{dfn}

Thus, 
$$
\int_{\partial \DD} \mathcal{K}_{n,\lambda}(z,\xi)\, d\nu(\xi) 
= \frac{\Poiss_{n,\lambda}\nu(z)}{\Phi_n(z \,|\, \lambda)}\,.
$$

\begin{lem}\label{lem:Kcompare}
There is a constant $\widetilde C(\lambda)$ for which the kernels  
of Definition \ref{def:normalizedkernel} satisfy
$$
\bigl|\mathcal{K}_{n,\lambda}(z,\xi)\bigr| 
\leqslant \widetilde C(\lambda) \, \mathcal{K}_{0,\lambda^*}(z,\xi) 
$$
where $\lambda^* \geqslant -\frac14$ is given by \eqref{eq:assoc-lambda} 
and $|z| \geqslant r_{n,\lambda}\,$.
\end{lem}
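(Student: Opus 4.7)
The plan is to reduce the inequality to a pointwise estimate whose only $\xi$-dependence enters through $|\hor(z,\xi)|^n$, and then absorb that factor into a uniform constant using the Busemann Lipschitz bound $|\hor(z,\xi)|\leq\rho(z,0)=R$ together with Theorem~\ref{thm:asymp}.

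First I would rewrite $|P_n(z,\xi\,|\,\lambda)|$ explicitly. Because $\hor(z,\xi)$ is real and $|P(z,\xi)^{\mus(\lambda)+1/2}|=P(z,\xi)^{\Re(\mus(\lambda))+1/2}=P(z,\xi\,|\,\lambda^*)$ by \eqref{eq:assoc-lambda}, Proposition~\ref{pro:polykernel} gives
$$
|P_n(z,\xi\,|\,\lambda)| = \frac{|\hor(z,\xi)|^n}{n!\,|2\mus(\lambda)|^n}\,P(z,\xi\,|\,\lambda^*) \qquad (\lambda\neq -\tfrac14),
$$
with the analogous identity involving $\hor(z,\xi)^{2n}/(2n)!$ in place of $|\hor(z,\xi)|^n/(n!|2\mus(\lambda)|^n)$ when $\lambda=-\tfrac14$. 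Dividing by $|\Phi_n(z\,|\,\lambda)|$ and multiplying through by $\Phi(z\,|\,\lambda^*)/\Phi(z\,|\,\lambda^*)$, this rearranges to
$$
|\mathcal{K}_{n,\lambda}(z,\xi)| = \frac{|\hor(z,\xi)|^n\,\Phi(z\,|\,\lambda^*)}{n!\,|2\mus(\lambda)|^n\,|\Phi_n(z\,|\,\lambda)|}\,\mathcal{K}_{0,\lambda^*}(z,\xi),
$$
in which the coefficient of $\mathcal{K}_{0,\lambda^*}$ is a function of $|z|$ once the $|\hor|^n$-factor is dealt with.

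Next I would invoke the Busemann bound $|\hor(z,\xi)|\leq R$, which is immediate from $\hor(0,\xi)=0$ together with $|\rho(w,z)-\rho(w,0)|\leq\rho(z,0)$ taken inside the limit \eqref{eq:busemann}. This replaces $|\hor(z,\xi)|^n$ by $R^n$, so it remains to bound the $\xi$-independent prefactor $R^n\,\Phi(z\,|\,\lambda^*)/(n!\,|2\mus(\lambda)|^n\,|\Phi_n(z\,|\,\lambda)|)$. When $\lambda\in\C\setminus(-\infty,-\tfrac14]$, Theorem~\ref{thm:asymp}.A gives $\Phi(z\,|\,\lambda^*)\sim c(\lambda^*)\,e^{(\mus(\lambda^*)-1/2)R}$ and $|\Phi_n(z\,|\,\lambda)|\sim\tfrac{|c(\lambda)|}{n!\,|2\mus(\lambda)|^n}\,R^n e^{(\mus(\lambda^*)-1/2)R}$, so the exponentials and the $R^n$-factors cancel and the prefactor tends to $c(\lambda^*)/|c(\lambda)|$ as $R\to\infty$. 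In the case $\lambda=-\tfrac14$, Theorem~\ref{thm:asymp}.B gives the parallel cancellation of $R^{2n+1}e^{-R/2}$, with limiting value $2n+1$. Either way the prefactor is asymptotically bounded.

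Finally I would promote this asymptotic bound to a uniform bound on $\{|z|\geq r_{n,\lambda}\}$ by compactness: pick $r_0<1$ beyond which the prefactor lies within $1$ of its limit; then on the compact annulus $\{r_{n,\lambda}\leq|z|\leq r_0\}$ it is a continuous function of $|z|$, finite because $\Phi_n(z\,|\,\lambda)\neq 0$ there by the very choice of $r_{n,\lambda}$ in Theorem~\ref{thm:asymp} and $\Phi(z\,|\,\lambda^*)>0$ by Proposition~\ref{pro:positive}. The maximum of the two bounds yields $\widetilde C(\lambda)$. The only real obstacle is keeping the two regimes $\lambda\neq-\tfrac14$ and $\lambda=-\tfrac14$ aligned, as the polynomial and exponential profiles in Theorem~\ref{thm:asymp} differ; the geometric content is entirely the Lipschitz estimate $|\hor|\leq R$, which saturates precisely at the $R^n$-factor appearing in the asymptotics of $\Phi_n$.
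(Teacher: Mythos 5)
Your proposal is correct and follows essentially the same route as the paper: bound $|\hor(z,\xi)|$ by $R$, then use the asymptotics of Theorem \ref{thm:asymp} to compare $|\Phi_n(z\,|\,\lambda)|$ with $R^n\,\Phi(z\,|\,\lambda^*)$ (resp.\ $R^{2n}$ in the critical case) and absorb everything into a constant valid for $|z|\geqslant r_{n,\lambda}$, the paper leaving the compactness step implicit where you spell it out. The only nit is that positivity of $\Phi(z\,|\,\lambda^*)$ comes from the positivity of the integrand in \eqref{eq:spherical} (cf.\ Remark \ref{rem:zeroes}) rather than from Proposition \ref{pro:positive}, which is stated for $n\geqslant 1$.
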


\begin{proof}
Note that $|\hor(re^{\im\alpha},\xi)| \leqslant \hor(r,1) = R$, where (recall) $r=|z|$ and
$R = \rho(z,0) = \rho(r,0) = \log \dfrac{1+r}{1-r}$. By Theorem \ref{thm:asymp},
as $r=|z|\to 1$,
$$
\bigl|\Phi(z \,|\, \lambda)\bigr| \sim
\frac{|c(\lambda)|}{c(\lambda^*)}\, \Phi(z \,|\, \lambda^*). 
$$
Note also that by Theorem \ref{thm:asymp},
$$
\bigl|\Phi_n(z \,|\, \lambda)\bigr| \sim 
\begin{cases} \bigl|\Phi(z \,|\, \lambda)\bigr|\, R^n \, \dfrac{1}{n! \bigl(2|\mus(\lambda)|\bigr)^n}\,,
&\text{if }\;\lambda \in \C\setminus (-\infty\,,\,-\frac14)\,,\\[12pt]
\bigl|\Phi(z \,|\, \lambda)\bigr|\, R^{2n} \, \dfrac{1}{(2n+1)!} \,,
&\text{if }\;\lambda =-\frac14 \; (= \lambda^*)\,,
\end{cases}
$$ 
as $r \to 1$. 
Therefore, there is $\widetilde C(\lambda) > 0$ (depending on $n$) such that 
$$
\widetilde C(\lambda) \bigl|\Phi_n(z \,|\, \lambda)\bigr| \geqslant
\begin{cases} \Phi(z \,|\, \lambda^*)\, R^n \,, 
&\text{if }\;\lambda \in \C\setminus (-\infty\,,\,-\frac14)\,,\\[12pt]
\Phi(z \,|\, \lambda^*)\, R^{2n} \,,
&\text{if }\;\lambda =-\frac14\,,
\end{cases}
$$ 
for all $r \geqslant r_{n,\lambda}\,$. 
It follows that $\bigl|\mathcal{K}_{n,\lambda}(z,\xi)\bigr|$ is bounded above by 
\begin{align*}
\frac{P(z,\xi)^{\Re(\mus(\lambda))+1/2}R^n}{\bigl|\Phi_n(z \,|\, \lambda)\bigr|} 
&\leqslant 
\widetilde C(\lambda)\,\frac{P(z,\xi)^{\mus(\lambda^*)+1/2}}{\Phi(z \,|\, \lambda^*)},\quad \text{if }\; 
\lambda \in \C\setminus (-\infty\,,\,-\frac14\,]\,,\\
\intertext{and by}
\frac{P(z,\xi)^{1/2}R^{2n}}{\Phi_n(z|-\tfrac14)
} &\leqslant 
\widetilde C(-\tfrac14)\,\frac{P(z,\xi)^{1/2}}{\Phi(z|-\tfrac14)},\quad \text{if }\; 
\lambda  = -\frac14\,,
\end{align*}
which proves our claim.
\end{proof}

\begin{rem}\label{rem:well}
(a) If $\lambda > -\frac14$ then, by Lemma \ref{lem:at0}, 
$\mathcal{K}_{n,\lambda}(z,\xi)$ is well-defined for all $z \in \DD \setminus \{0\}$. 
When $n$ is even then, in view of \eqref{eq:expansion}, it extends continuously to $z=0$,
and we can set $r_{n,\lambda}=0$ for even $n$. 

However, when $n$ is odd, again in view of \eqref{eq:expansion},
$\mathcal{K}_{n,\lambda}(z,\xi)$ has a pole at $z=0$ unless $\xi = \pm \im$.
Thus, for Lemma \ref{lem:Kcompare} we need to work with $r_{n,\lambda}=\ep$ for odd $n$, 
where $\ep > 0$ is arbitrary but fixed.
\\[3pt]
(b) If $\lambda = -\frac14$ then $\mathcal{K}_{n,\lambda}(z,\xi)$ extends continuosly to $z=0$
for all $n$, and we can always use $r_{n,-\frac14} = 0$.
\end{rem}

Note that $\mathcal{K}_{0,\lambda^*}(z,\cdot)\, d\leb$ is a probability
measure on $\partial \DD$ for each $z \in \DD$. 

Another look at the proof of Theorem \ref{thm:asymp} yields the following.

\begin{lem}\label{lem:tozero} Let $\lambda \in \C \setminus (-\infty\,,\,-\frac14)$, 
$\mus = \mus(\lambda)$ and $n \in \N_0\,$. Then 
$$
\begin{aligned}
&\lim_{r \to 1} \mathcal{K}_{n,\lambda}(r,e^{\im\psi}) = 0
\qquad \text{uniformly for}\\[4pt] 
|\psi| \in &\begin{cases}  
[2\tau^{-a}\,,\,\pi] \,,
&\text{if }\; \lambda \in \C \setminus (-\infty\,,\,-\frac14)\,,\quad\text{where }\; 
0 < a < \dfrac{2\Re (\mus)}{2\Re (\mus)+1}\,,\\
[2(\log \tau)^{-a}\,,\,\pi]\,,&\text{if }\; \lambda = -\frac14\,,\quad\text{where }\; 
0 < a < 1\,.
\end{cases}
\end{aligned}
$$
\end{lem}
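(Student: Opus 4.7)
The plan is to compute $\mathcal{K}_{n,\lambda}(r, e^{\im\psi})$ as a ratio, using the explicit formula from Proposition \ref{pro:polykernel} for the numerator and the $\psi$-free asymptotic of Theorem \ref{thm:asymp} for the denominator, then extract the uniform pointwise decay from the same tail estimates used in the proof of Theorem \ref{thm:asymp}. Throughout, write $-\hor(r, e^{\im\psi}) = \log P_r(\psi)$ with $P_r$ as in \eqref{eq:Prphi}, $P_r(0) = e^R$ and $\tau = \tau_r \sim e^R$ as in \eqref{eq:taur}.

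For $\lambda \in \C\setminus(-\infty\,,-\tfrac14\,]$, set $\mus = \mus(\lambda)$. Dividing the numerator from Proposition \ref{pro:polykernel} by the lower bound $|\Phi_n(r\,|\,\lambda)| \geqslant \tfrac{|c(\lambda)|}{2\, n!\,(2|\mus|)^n}\, R^n\, e^{(\Re(\mus)-1/2)R}$ (valid once $r$ is close enough to $1$ by Theorem \ref{thm:asymp}.A, using $c(\lambda)\neq 0$) and rewriting $P_r(\psi)^{\mus + 1/2}/ e^{(\mus - 1/2)R} = e^R (P_r(\psi)/P_r(0))^{\mus + 1/2}$ yields
$$
\bigl|\mathcal{K}_{n,\lambda}(r, e^{\im\psi})\bigr| \;\leqslant\; \frac{C}{|c(\lambda)|}\left|\frac{\log P_r(\psi)}{R}\right|^{\!n} \frac{\tau}{(1 + \tau^2 \sin^2(\psi/2))^{\Re(\mus)+1/2}}.
$$
The middle factor is bounded by $1$ uniformly in $\psi$, because $e^{-R} = P_r(\pi) \leqslant P_r(\psi) \leqslant P_r(0) = e^R$. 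For $|\psi| \in [2\tau^{-a}\,,\pi]$ with $0 < a < 2\Re(\mus)/(2\Re(\mus)+1)$, the monotonicity estimate \eqref{eq:tautoinfty} gives $(1+\tau^2\sin^2(\psi/2))^{\Re(\mus)+1/2} \gtrsim \tau^{(1-a)(2\Re(\mus)+1)}$, so that $\bigl|\mathcal{K}_{n,\lambda}\bigr| \lesssim \tau^{1-(1-a)(2\Re(\mus)+1)} \to 0$ uniformly as $r \to 1$, by the choice of $a$.

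For $\lambda = -\tfrac14$ I would proceed analogously, invoking Theorem \ref{thm:asymp}.B to obtain
$$
\bigl|\mathcal{K}_{n,-1/4}(r, e^{\im\psi})\bigr| \;\leqslant\; C\,\frac{\tau}{R}\left|\frac{\log P_r(\psi)}{R}\right|^{\!2n}\frac{1}{(1+\tau^2 \sin^2(\psi/2))^{1/2}}.
$$
For $|\psi| \in [2(\log\tau)^{-a}\,,\pi]$ with $0 < a < 1$, the same monotonicity argument gives $(1+\tau^2 \sin^2(\psi/2))^{1/2} \gtrsim \tau\,(\log\tau)^{-a}$, so $\bigl|\mathcal{K}_{n,-1/4}\bigr| \lesssim (\log\tau)^{a}/R \sim R^{a-1} \to 0$ uniformly as $r \to 1$.

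I expect no deep obstacle: the only point requiring a little care is that the asymptotic equivalence of Theorem \ref{thm:asymp} must be turned into a two-sided bound below by a constant multiple of its leading term for $r$ near $1$, but since $\Phi_n(r\,|\,\lambda)$ depends only on $r$, this is immediate once a fixed threshold is chosen. Everything else reduces to the elementary fact that $\sin(\psi/2) \geqslant \sin(\tau^{-a}) \sim \tau^{-a}$ (resp.\ $\sim (\log\tau)^{-a}$) on the relevant range, together with the universal bound $|\log P_r(\psi)|/R \leqslant 1$.
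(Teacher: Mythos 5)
Your proof is correct and follows essentially the same route as the paper: both reduce the claim to the uniform decay of $\tau/\bigl(1+\tau^2\sin^2(\psi/2)\bigr)^{\Re(\mus)+1/2}$ (resp.\ with the extra $1/R$ in the critical case) on the stated ranges, using the monotonicity bound \eqref{eq:tautoinfty} together with the asymptotics of Theorem \ref{thm:asymp}. The only organizational difference is that you absorb the order $n$ directly via $|\log P_r(\psi)|\leqslant R$ and the $R^n$ in the denominator asymptotics, whereas the paper first reduces to $n=0$ and real $\lambda^*$ through Lemma \ref{lem:Kcompare}; both are sound.
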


\begin{proof} In view of Lemma \ref{lem:Kcompare}, we only need to prove this 
for $n=0$ and real $\lambda \geqslant -\frac14$. In this case, it is well known except maybe for the fact that
usually the lower bound for $|\psi|$ is required to be a positive constant, while here it 
tends to $0$ as $r \to 1$.

\smallskip

First, look at case \textbf{(A)} of Theorem \ref{thm:asymp}.
With $\tau$ as in \eqref{eq:taur} and $c(\lambda)$ as in \eqref{eq:clambda}, 
we have as $r \to 1$ 
$$
\frac{P(r,e^{\im\psi}|\lambda)}{\Phi(r \,|\, \lambda)}
\sim \frac{P_r(\psi)^{\mus + 1/2}}
           {c(\lambda)\,P_r(0)^{\mus + 1/2} \,  e^{-R}}
\sim \frac{1}{c(\lambda)} \,  \frac{\tau}{\bigl(1 + \tau^2\sin^2 (\psi/2)\bigr)^{\mus + 1/2}}.
$$
By \eqref{eq:tautoinfty} and \eqref{eq:sin},
this tends to $0$ uniformly in the stated range.

In case \textbf{(B)} of Theorem \ref{thm:asymp}, 
as $r\to 1$, that is, $\tau\to\infty$,
$$
\frac{P(r,e^{\im\psi}|-\frac14)}{\Phi(r \,|\, -\frac14)}
\sim \frac{\pi}{2}\,\frac{P_r(\psi)^{1/2}}
           {R\, e^{-R} \,P_r(0)^{1/2}}
\sim \frac{\pi}{2}\,\frac{\tau/\log \tau}{\bigl(1 + \tau^2\sin^2 (\psi/2)\bigr)^{1/2}}.
$$
Again, this tends to $0$ uniformly in the stated range.
\end{proof}

The kernels are also rotation invariant:
$\mathcal{K}_{n,\lambda}(e^{\im \alpha}z,e^{\im \alpha}\xi) = 
\mathcal{K}_{n,\lambda}(z,\xi)$.
This fact and the last two lemmas yield the following by 
well-known methods.

\begin{pro}\label{pro:basic_convergence}
Let $n \in \N_0\,$ and $\lambda \in \C \setminus (-\infty\,,\,-\frac14)$. For a 
measurable function $g : \partial \DD \to \C$, resp. a complex Borel measure $\nu$
on $\partial \DD$, let
$$
f(z) = \Poiss_{n,\lambda}g(z)
\,,\quad \text{resp.}\quad 
f(z) = \Poiss_{n,\lambda}\nu(z)\,.
$$
Then the following properties hold.
\begin{enumerate}
 \item[(i)] If $g \in C(\partial \DD)$ then 
 $\;\;\displaystyle \lim_{z \to \xi} \frac{f(z)}{\Phi_n(z \,|\, \lambda)} = g(\xi)\;\;$ for all 
 $\xi \in \partial \DD$ and uniformly as $|z| \to 1$.
\item[(ii)] If $g \in L^p(\partial \DD)$ ($1 \leqslant p < \infty$) then 
  $\;\;\displaystyle \lim_{r \to 1} \frac{f(r\xi)}{\Phi_n(r \,|\, \lambda)} = g(\xi)\;\;$ for almost every
  $\xi \in \partial \DD$ and in $L^p(\partial \DD)$.
\item[(iii)] If $g \in L^{\infty}(\partial \DD)$ then 
  $\;\;\displaystyle \lim_{r \to 1} \frac{f(r\xi)}{\Phi_n(r \,|\, \lambda)} = g(\xi)\;\;$  in the weak*-topology of 
  $L^{\infty}(\partial \DD)$.  
\item[(iv)] If $\nu$ is a finite Borel measure on $\partial \DD$ then the measures
$\;\;\dfrac{f(r\xi)}{\Phi_n(r \,|\, \lambda)}\,d\xi\;\;$ 
converge to $\nu$ in the weak*-topology, as $r \to 1$. 
\end{enumerate}
\end{pro}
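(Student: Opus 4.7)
The plan is to recognise the family $\{\mathcal{K}_{n,\lambda}(z,\cdot)\,d\leb\}_{z \in \DD}$ as a (possibly complex-valued) approximate identity at the boundary and then apply the classical convolution machinery on the circle. I would first assemble three structural properties. \emph{(a) Normalisation:} $\int_{\partial \DD}\mathcal{K}_{n,\lambda}(z,\xi)\,d\xi = 1$ for $|z| \geqslant r_{n,\lambda}$, straight from Definition \ref{def:polysph}. \emph{(b) Uniform $L^1$-bound:} $\bigl|\mathcal{K}_{n,\lambda}(z,\cdot)\bigr| \leqslant \widetilde C(\lambda)\,\mathcal{K}_{0,\lambda^*}(z,\cdot)$ from Lemma \ref{lem:Kcompare}, where the right hand side is a genuine probability density since $\lambda^*\geqslant -\tfrac14$ is real and $\Phi(z\,|\,\lambda^*)>0$. \emph{(c) Concentration:} $\int_{\partial \DD \setminus I}|\mathcal{K}_{n,\lambda}(z,\xi)|\,d\xi \to 0$ as $|z|\to 1$ with $z/|z|$ forced toward a point of the arc $I$, which follows from Lemma \ref{lem:tozero} and rotation invariance.

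To prove (i), I would use (a) to write
\begin{equation*}
\frac{f(z)}{\Phi_n(z\,|\,\lambda)} - g(\xi_0)
= \int_{\partial \DD} \bigl(g(\xi)-g(\xi_0)\bigr)\,\mathcal{K}_{n,\lambda}(z,\xi)\,d\xi,
\end{equation*}
and split the integral over a small arc $I$ about $\xi_0$ and its complement; continuity of $g$ combined with (b) controls the first piece by $\ep\,\widetilde C(\lambda)$, while (c) kills the second. Uniform continuity of $g$ upgrades the estimates to hold uniformly in $\xi_0$, hence uniform convergence as $|z|\to 1$. Statement (iii) then drops out at once: the pointwise bound $|f(z)/\Phi_n(z\,|\,\lambda)| \leqslant \widetilde C(\lambda)\|g\|_\infty$ from (a)--(b), together with the convergence (i) on the dense subspace $C(\partial\DD) \subset L^1(\partial\DD)$, yields the weak$^*$ convergence against arbitrary $L^1$ test functions.

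For (ii), the main technical step is the pointwise maximal bound
\begin{equation*}
\sup_{0 \leqslant r < 1} \left|\frac{f(r\xi_0)}{\Phi_n(r\,|\,\lambda)}\right|
\;\leqslant\; C(\lambda)\,Mg(\xi_0),
\end{equation*}
where $M$ is the Hardy--Littlewood maximal operator on $\partial \DD$. Using (b) it suffices to treat the nonnegative radial kernel $\mathcal{K}_{0,\lambda^*}(r\xi_0,\xi)$, which by \eqref{eq:Prphi} is a decreasing function of the angular distance $|\psi|$ between $\xi$ and $\xi_0$ and integrates to $1$; the standard rearrangement/layer-cake argument then dominates it pointwise by a superposition of normalised characteristic functions of arcs about $\xi_0$. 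Combined with the strong $(p,p)$ (resp.\ weak $(1,1)$) bound for $M$, density of $C(\partial\DD)$ in $L^p$, and the limit (i), a routine $\ep/3$ argument yields almost everywhere convergence; $L^p$-convergence follows by dominated convergence against the maximal majorant.

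For (iv), the symmetry $P(r\xi,\eta) = P(r\eta,\xi)$ for $\xi,\eta \in \partial \DD$, which is immediate from \eqref{eq:extendedP}, passes to $P_n$ since the latter depends on $(z,\xi)$ only through $P$ and $\hor = -\log P$. Hence for $\phi \in C(\partial\DD)$, Fubini gives
\begin{equation*}
\int_{\partial \DD} \phi(\xi)\,\frac{f(r\xi)}{\Phi_n(r\,|\,\lambda)}\,d\xi
= \int_{\partial \DD} \left(\int_{\partial \DD}\phi(\xi)\,\mathcal{K}_{n,\lambda}(r\eta,\xi)\,d\xi\right) d\nu(\eta);
\end{equation*}
the inner integral equals $\Poiss_{n,\lambda}\phi(r\eta)/\Phi_n(r\,|\,\lambda)$, which by (i) and uniform continuity of $\phi$ tends to $\phi(\eta)$ uniformly in $\eta$. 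Dominated convergence against $|\nu|$, with uniform majorant $\widetilde C(\lambda)\|\phi\|_\infty$, closes the argument. The main obstacle is confirming the maximal inequality of (ii) at the critical value $\lambda = -\tfrac14$, where the extra logarithmic factor $R$ in $\Phi(r\,|\,-\tfrac14)$ must be absorbed by the decay of $P(r,\cdot)^{1/2}$; this is consistent with the radial-monotone structure recorded in Theorem \ref{thm:asymp}.B together with \eqref{eq:Prphi}.
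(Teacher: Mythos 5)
Your proposal is correct and follows essentially the same route as the paper, which simply invokes rotation invariance together with Lemmas \ref{lem:Kcompare} and \ref{lem:tozero} and refers to the classical references (Zygmund, Stein--Weiss, Garnett) for the approximate-identity machinery that you spell out. Two small remarks: for $p=1$ the $L^p$-norm convergence in (ii) should be obtained via Minkowski's integral inequality and continuity of translation in $L^1$ rather than by dominating with $\mathcal{M}g$ (which need not be integrable when $g\in L^1$ only), and your worry about the critical value $\lambda=-\tfrac14$ is unfounded for the purely radial maximal function, since $\mathcal{K}_{0,-1/4}(r,\cdot)$ is still a nonnegative probability density decreasing in the angular distance, so the standard layer-cake domination applies verbatim.
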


The classical reference is
{\sc Zygmund} \cite[chapter 17, Theorems 1.20 \& 1.23]{Zygmund}.
See also {\sc Stein and Weiss} \cite[\S I.1, Theorems 1.18 \& 1.25 and \S II.2]{StWe}
and {\sc Garnett} \cite[Theorem 3.1]{Gar}. In these references, the results are 
presented for the upper half space, resp. half plane and carry over to the disk model 
of the hyperbolic plane. 

\medskip

\section{Dirichlet and Riquier problem at infinity}\label{sec:DirRiq}

Reconsider statement (i) of Proposition \ref{pro:basic_convergence}. It says that for 
any $g \in \mathcal{C}(\partial \DD)$, the function 
\begin{equation}\label{eq:divide}
 f(z) = \frac{\Poiss_{n,\lambda}g(z)}{\Phi_n(z \,|\, \lambda)}
\end{equation}
is $\lambda$-polyharmonic of order $n+1$ which provides a continuous extension of
$g$ to $\{ z \in \DDbar: |z| \geqslant r_{n,\lambda} \}\,$.  
When $n \geqslant 1$ in \eqref{eq:divide} then we cannot expect that the given $f$ is the unique
function with this property. Indeed, for example also 
$$
\lim_{z \to \xi} \frac{f(z)+ \Phi_k(z \,|\, \lambda)}{\Phi_n(z \,|\, \lambda)} = g(\xi) \quad 
\text{when }\; 0 \leqslant k < n\,.
$$
However, when $n=0$ and we are considering $\lambda$-harmonic functions, this is the solution 
of the $\lambda$-Dirichlet problem, valid on all of $\DDbar$ since 
$\Phi(\cdot \,|\, \lambda)$ has no zeroes in $\DD$. It is well-known to be unique 
when $\lambda =0$. The extension 
to \emph{real} $\lambda \geqslant -\frac14$ with normalisation by $\Phi(z \,|\, \lambda)$ is 
well understood 
via the maximum principle applied to the kernels $\mathcal{K}_{0,\lambda}(z,\xi)$ of 
Definition \ref{def:normalizedkernel}, which are 
probability kernels with respect to $\leb$ for real $\lambda$. 
When $\lambda \in \C\setminus (-\infty\,,\,-\frac14)$ is \emph{complex}, 
we can still prove uniqueness,
via a different technique in place of the standard one.

\begin{theorem}\label{thm:dirichlet}
Let $\lambda \in \C\setminus (-\infty\,,\,-\frac14)$ and
 $g \in \mathcal{C}(\partial \DD)$. Then
 $h(z) = \Poiss_{\lambda}g(z)$ 
 is the \emph{unique} $\lambda$-harmonic function for which 
\begin{equation}\label{eq:div}
 \lim_{\DD \ni z \to \xi} \frac{h(z)}{\Phi(z \,|\, \lambda)} = g(\xi) 
 \quad \text{for every }\; \xi \in \partial \DD. 
\end{equation}
\end{theorem}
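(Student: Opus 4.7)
My plan is to derive existence from Proposition~\ref{pro:basic_convergence}(i) and then focus on uniqueness. For uniqueness it suffices to prove that a $\lambda$-harmonic function $h$ satisfying $h(z)/\Phi(z\,|\,\lambda)\to 0$ as $z\to\xi$ for every $\xi\in\partial\DD$ must vanish identically. As a first step, I would upgrade the pointwise boundary vanishing to uniform convergence along circles: since $\Phi(\,\cdot\,|\,\lambda)$ has no zeroes in $\DD$ by Remark~\ref{rem:zeroes}, the ratio $\tilde h(z):=h(z)/\Phi(z\,|\,\lambda)$ is continuous on $\DD$, and the hypothesis makes its extension by $0$ on $\partial\DD$ continuous on the compact set $\DDbar$, hence uniformly continuous. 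In particular, $\sup_{\alpha\in[-\pi,\pi]}|\tilde h(re^{\im\alpha})|\to 0$ as $r\to 1^-$.

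Next I would invoke Helgason's Theorem~\ref{thm:helgason} to write $h(z)=\int_{\partial\DD}P(z,\xi\,|\,\lambda)\,d\nu(\xi)$ for a unique analytic functional $\nu$, reducing the problem to showing $\nu=0$. By rotation invariance the kernel has the Fourier expansion
$$
P(re^{\im\alpha},e^{\im\psi}\,|\,\lambda)=\sum_{n\in\Z}\phi_n(r\,|\,\lambda)\,e^{\im n(\psi-\alpha)},\qquad \phi_n(r\,|\,\lambda)=\frac{1}{2\pi}\int_{-\pi}^{\pi}P_r(\psi)^{\mus(\lambda)+1/2}e^{-\im n\psi}\,d\psi,
$$
with $\phi_0=\Phi(\,\cdot\,|\,\lambda)$; the partial sums converge in $\mathcal H(\AB_\delta)$ for some $\delta=\delta(r)>0$, so pairing termwise with $\nu$ yields $h(re^{\im\alpha})=\sum_n\phi_n(r\,|\,\lambda)\,\tilde\nu_n\,e^{-\im n\alpha}$ with $\tilde\nu_n:=\nu(\xi^n)$. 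Orthogonality on the circle then gives, for each fixed $n\in\Z$,
$$
\tilde\nu_n\,\phi_n(r\,|\,\lambda)=\frac{1}{2\pi}\int_{-\pi}^{\pi}h(re^{\im\alpha})\,e^{\im n\alpha}\,d\alpha.
$$

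The key ingredient I still need is the asymptotic $\phi_n(r\,|\,\lambda)\sim\Phi(r\,|\,\lambda)$ as $r\to 1$ for each fixed $n\in\Z$, which I would prove by replicating the analysis of Theorem~\ref{thm:asymp}(A) (or part~(B) when $\lambda=-\tfrac14$): localize to $|\psi|\leqslant\tau^{-a}$ with $\tau=\tau_r$ as in \eqref{eq:taur}, make the substitution $x=\tau\sin(\psi/2)$, and observe that the extra factor $e^{-\im n\psi}$ becomes $e^{-\im n\cdot O(x/\tau)}\to 1$ pointwise, so dominated convergence recovers the same leading constant $c(\lambda)$ as for $\phi_0=\Phi$. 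Granting this, I divide the previous identity by $\phi_n(r\,|\,\lambda)$ and use the radiality $\Phi(re^{\im\alpha}\,|\,\lambda)=\Phi(r\,|\,\lambda)$ to rewrite
$$
\tilde\nu_n=\frac{\Phi(r\,|\,\lambda)}{\phi_n(r\,|\,\lambda)}\cdot\frac{1}{2\pi}\int_{-\pi}^{\pi}\tilde h(re^{\im\alpha})\,e^{\im n\alpha}\,d\alpha;
$$
as $r\to 1^-$ the first factor tends to $1$ while the integral tends to $0$ by the uniform bound on $\tilde h$ from Step~1. Thus $\tilde\nu_n=0$ for every $n\in\Z$, and since an analytic functional on $\partial\DD$ is determined by its Fourier coefficients (cf.~\eqref{eq:nuaction}) we conclude $\nu=0$, whence $h\equiv 0$.

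The main obstacle is precisely the asymptotic $\phi_n/\Phi\to 1$: because $\mus(\lambda)$ is in general complex, the integrand $P_r(\psi)^{\mus+1/2}$ oscillates, and one must verify that the additional oscillation from $e^{-\im n\psi}$ does not cancel the leading mass. The sharp concentration of the integral near $\psi=0$ established in the proof of Theorem~\ref{thm:asymp}(A) is exactly what makes this extra factor a harmless perturbation, since after the substitution $x=\tau\sin(\psi/2)$ the oscillation is of order $n/\tau\to 0$ on the effective region of integration.
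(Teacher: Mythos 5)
Your proof is correct, but it takes a genuinely different route from the paper's. Both arguments share the first step: since $\Phi(\cdot\,|\,\lambda)$ has no zeroes in $\DD$ (Remark~\ref{rem:zeroes}), the ratio $h/\Phi(\cdot\,|\,\lambda)$ extends continuously by $0$ to the compact set $\DDbar$, so the boundary vanishing is automatically uniform. From there the paper argues geometrically: the spherical average of $h$ about $0$ is radial and $\lambda$-harmonic, hence equals $h(0)\,\Phi(\cdot\,|\,\lambda)$ by uniqueness of radial eigenfunctions, which forces $h(0)=0$; the base point is then moved to an arbitrary $z_0$ by a M\"obius isometry, with Theorem~\ref{thm:asymp} controlling the ratio $\Phi(\gamma z\,|\,\lambda)/\Phi(z\,|\,\lambda)$. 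You instead invoke Helgason's representation and kill the representing functional $\nu$ coefficient by coefficient, pairing $h(re^{\im\alpha})$ against $e^{\im n\alpha}$; your $n=0$ coefficient is precisely the paper's spherical-average step, and the coefficients with $n\neq 0$ replace the M\"obius-transport step. The price of your route is the asymptotic $\phi_n(r\,|\,\lambda)\sim\Phi(r\,|\,\lambda)$ for the associated spherical functions, which is \emph{not} in the paper; you correctly isolate it as the key missing ingredient, and your sketch is sound — the tail estimates of Theorem~\ref{thm:asymp} are unaffected by the unimodular factor $e^{-\im n\psi}$, and on the concentration region ($|\psi|\lesssim\tau^{-a}$, resp.\ $|\psi|\lesssim 1/\log\tau$ when $\lambda=-\frac14$) that factor tends to $1$ uniformly, so dominated convergence yields the same constant $c(\lambda)$. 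The termwise pairing with $\nu$ is legitimate because the Laurent partial sums of $\xi\mapsto P(z,\xi\,|\,\lambda)$ converge in a fixed $\mathcal H(\AB_\delta)$, $\delta<1-|z|$. What each approach buys: the paper's proof is softer, uses only the already-established asymptotics and the transitivity of the isometry group, and ports to symmetric spaces; yours is more computational but needs only rotations, stays entirely within the Fourier analysis of the circle, and makes explicit the link between the boundary decay of $h$ and the vanishing of its Helgason functional.
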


\begin{proof}
We  know that the given function $h$ is a solution of the $\lambda$-Dirichlet 
problem, i.e. it satisfies \eqref{eq:div}. In order to show uniqueness, it is enough to 
prove that the constant function $0$ is the only solution, when $g \equiv 0$ on $\partial \DD$. 
In other words, we assume that $h$ is a $\lambda$-harmonic function on $\DD$ with
$$
 \lim_{\DD \ni z \to \xi} \frac{h(z)}{\Phi(z \,|\, \lambda)} = 0
 \quad \text{for every }\; \xi \in \partial \DD\,, 
$$
and we have to show that $h \equiv 0$.

Let $\dol h$ be the spherical average of $h$ around $0$, 
$$
\dol h(z) = \frac{1}{2\pi} \int_{-\pi}^{\pi} h(|z|e^{\im\phi})\, d\phi\,. 
$$
In particular, $\dol h(0)=h(0)$.
Then $\dol h$ is also $\lambda$-harmonic, and it is
rotation-invariant. Now, up to multiplication with constants, $\Phi(z \,|\, \lambda)$
is the unique $\lambda$-harmonic function which is rotation-invariant. See e.g. 
\cite{Ey} for this well-known fact. Therefore
$$
\dol h(z) = h(0) \, \Phi(z \,|\, \lambda)\,.
$$
On the other hand, the function on the closed disk which is $h/\Phi(\cdot\,|\lambda)$ in the interior 
and~$0$ on the boundary is continuous, whence uniformly continuous, so that 
$$
\lim_{|z| \to 1} \frac{h(z)}{\Phi(z \,|\, \lambda)} = 0
\quad \text{uniformly in }\; z.
$$
Since $\Phi(z \,|\, \lambda)$ only depends on $|z|$, also 
$$
\lim_{|z| \to 1} \frac{\dol h(z)}{\Phi(z \,|\, \lambda)} = 0.
$$
We conclude that $h(0)=0$. 

Now let $z_0 \in \DD$ be arbitrary. Then there is an isometry $\gamma$
of the Poincar\'e disk (a M\"obius transform) such that $\gamma\, 0 = z_0\,$.
The isometries commute with the hyperbolic Laplacian, whence also the function
$h_{\gamma}(z) = h(\gamma z)$ is $\lambda$-harmonic. If $|z| \to 1$ then also $|\gamma z| \to 1$.
Therefore also $h(\gamma z)/\Phi(\gamma z \,|\, \lambda) \to 0$ as $|z| \to 1$. Now 
Theorem \ref{thm:asymp} implies 
$$
\frac{\Phi(\gamma z \,|\, \lambda)}{\Phi(z \,|\, \lambda)} \sim 
\exp \Bigl((\mus-1/2)\bigl(\rho(\gamma\, z,o) - \rho(z,o)\bigr)\Bigr) \quad \text{as }\; |z| \to 1\,.
$$
This is bounded, since
$$
\bigl|\rho(\gamma\, z,o) - \rho(z,o)\bigr| = \bigl|\rho(z,\gamma^{-1}o) - \rho(z,o)\bigr| \leqslant
\rho(\gamma^{-1}o,o).
$$
We infer that
$$
\frac{h_{\gamma}(z)}{\Phi(z \,|\, \lambda)} = \frac{h(\gamma z)}{\Phi(\gamma z \,|\, \lambda)} \,
\frac{\Phi(\gamma z \,|\, \lambda)}{\Phi(z \,|\, \lambda)} \to 0 \quad \text{as }\; |z| \to 1\,.
$$
We can now apply the above argument to
$h_{\gamma}$ and its spherical average, and conclude that $h(z_0) = 0$. This is true for
every $z_0 \in \DD$.
\end{proof}

The next Lemma is in preparation of the Riquier problem for $\lambda$-polyharmonic functions.

\begin{lem}\label{lem:preRiq}
For $\lambda \in \C \setminus (-\infty\,,\,-\frac14)$, let $f$ be $\lambda$-polyharmonic of order $n+1$ 
on $\DD$ and such that the $\lambda$-harmonic function $h =(\Lap - \lambda \, I)^n f$
satisfies 
$$
\lim_{\DD \ni z \to \xi} \frac{h(z)}{\Phi(z \,|\, \lambda)} = g(\xi) 
 \quad \text{for every }\; \xi \in \partial \DD\,
$$
where $g \in \mathcal{C}(\partial \DD)$. Then
$$
f(z) = \Poiss_{n,\lambda}g(z) + f_*(z)\,,
$$
where $f_*$ is $\lambda$-polyharmonic of order $n$.
\end{lem}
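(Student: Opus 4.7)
The plan is to subtract off a canonical piece from $f$ whose $n$-th application of $(\Lap - \lambda\, I)$ has the same boundary data as $h$, and then invoke the uniqueness in the Dirichlet problem (Theorem \ref{thm:dirichlet}) to conclude that what is left is annihilated by $(\Lap - \lambda\, I)^n$. Concretely, I would set
$$
f_*(z) = f(z) - \Poiss_{n,\lambda} g(z),
$$
and aim to verify that $f_*$ is $\lambda$-polyharmonic of order $n$, i.e., $(\Lap - \lambda\, I)^n f_* \equiv 0$.

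First I would compute $(\Lap - \lambda\, I)^n \Poiss_{n,\lambda} g$. Since $P_n(z,\xi \,|\, \lambda)$ is smooth in $z$, uniformly in $\xi \in \partial \DD$ on compact subsets of $\DD$, one can differentiate under the integral sign. By Proposition \ref{pro:polykernel}, $(\Lap - \lambda\, I)^n P_n(z,\xi \,|\, \lambda) = P(z,\xi \,|\, \lambda)$, so
$$
(\Lap - \lambda\, I)^n \Poiss_{n,\lambda} g(z) = \int_{\partial \DD} P(z,\xi \,|\, \lambda)\, g(\xi)\, d\leb(\xi) = \Poiss_\lambda g(z).
$$
This $\lambda$-harmonic function satisfies, by part (i) of Proposition \ref{pro:basic_convergence} applied with $n=0$,
$$
\lim_{\DD \ni z \to \xi} \frac{\Poiss_\lambda g(z)}{\Phi(z \,|\, \lambda)} = g(\xi) \quad \text{for every } \xi \in \partial \DD.
$$

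The hypothesis on $f$ gives that $h = (\Lap - \lambda\, I)^n f$ is $\lambda$-harmonic with the same boundary behaviour after division by $\Phi(\,\cdot\,|\lambda)$. Therefore both $h$ and $\Poiss_\lambda g$ are $\lambda$-harmonic functions providing the same $\lambda$-Dirichlet data $g$. Theorem \ref{thm:dirichlet} applies, since $\lambda \in \C\setminus(-\infty,-\frac14)$, and yields
$$
h(z) = \Poiss_\lambda g(z) = (\Lap - \lambda\, I)^n \Poiss_{n,\lambda} g(z).
$$
Consequently $(\Lap - \lambda\, I)^n f_* = h - \Poiss_\lambda g \equiv 0$, so $f_*$ is $\lambda$-polyharmonic of order $n$, which is the claim.

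The only real substance is making sure that the differentiation under the integral sign in the first step is legitimate and that Theorem \ref{thm:dirichlet} genuinely applies; both are routine, the first because $P_n(\,\cdot\,,\xi \,|\, \lambda)$ and all its $z$-derivatives are locally uniformly bounded in $\xi$, and the second because it has already been established as a uniqueness theorem for all $\lambda \in \C \setminus(-\infty,-\tfrac14)$. There is no real obstacle; the lemma is essentially a book-keeping consequence of the order-reduction identity in Proposition \ref{pro:polykernel} combined with the uniqueness theorem for the $\lambda$-Dirichlet problem.
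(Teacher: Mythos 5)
Your proposal is correct and follows essentially the same route as the paper: identify $h$ with $\Poiss_\lambda g$ via the uniqueness statement of Theorem \ref{thm:dirichlet}, then use the order-reduction identity $(\Lap-\lambda\,I)^n P_n(\cdot,\xi\,|\,\lambda)=P(\cdot,\xi\,|\,\lambda)$ from Proposition \ref{pro:polykernel} to conclude that $f_*=f-\Poiss_{n,\lambda}g$ is annihilated by $(\Lap-\lambda\,I)^n$. The only difference is that you spell out the justification for differentiating under the integral sign and cite Proposition \ref{pro:basic_convergence}(i) explicitly, which the paper leaves implicit.
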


\begin{proof} This is very similar to the inductive argument in the proof of Theorem \ref{thm:poly}.
It follows from Theorem \ref{thm:dirichlet} that $h = \Poiss_{\lambda}g =: h_g\,$.
Write $f_g = \Poiss_{n,\lambda}g\,$. By Lemma \ref{lem:preRiq}, 
$$
(\Lap - \lambda \, I)^n f_g = h_g = (\Lap - \lambda \, I)^n f\,.
$$
Therefore $(\Lap - \lambda \, I)^n f_* = 0$. 
\end{proof}

In the setting of the Euclidean Laplacian $\Delta$ on a bounded domain, 
the Riquier problem asks for solutions of 
$\Delta^n f = 0$ with prescribed boundary data 
$g_k = \Delta^k f$ for $k=0,\dots,n-1$. This is not applicable to 
the hyperbolic setting of $(\Lap - \lambda \, I)^{n}$, even when $\lambda=0$, because 
in any case, the quotient of lower and higher order polyspherical functions tends to zero
at the boundary. For this reason, we propose a different formulation.

\begin{dfn}\label{def:Riq}
 Let $\lambda \in \C\setminus (-\infty\,,\,-\frac14)$ and 
$g_0\,, \dots\,, g_{n-1} \in \mathcal{C}(\partial \DD)$. Then a solution of
the associated \emph{Riquier problem at infinity} is a polyharmonic function
$$
f = f_0 + \dots + f_{n-1}
$$
of order $n$, where each $f_k$ is $\lambda$-polyharmonic of order $k+1$ and
$$
\lim_{z \to \xi}  \frac{(\Lap -\lambda \, I)^k f_k(z)}{\Phi(z \,|\, \lambda)} = g_k(\xi)  
\quad \text{for every }\; 
\xi \in \partial \DD \,.
$$
\end{dfn}

\smallskip

Note that by Remark \ref{rem:zeroes}, the denominator in the last
quotient is always non-zero.

\begin{cor}\label{cor:Riq}
A solution of the Riquier problem as in Definition \ref{def:Riq} is given by
$$
f_k(z) = \Poiss_{k,\lambda}g_k(z)\,, \quad k = 0, \dots, n-1\,.
$$
One also has for every $\xi \in \partial \DD$ and $k \in \{  0\,,\dots, n-1\}$
$$
\lim_{z \to \xi} \frac{f_k(z)}{\Phi_k(z \,|\, \lambda)} = g_k(\xi) \AND
\lim_{z \to \xi} \frac{f_j(z)}{\Phi_k(z \,|\, \lambda)} = 0 \; \text{ for }\, j < k. 
$$
\end{cor}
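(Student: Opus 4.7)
The plan is to verify that the proposed $f_k = \Poiss_{k,\lambda} g_k$ fulfils every requirement, each of which reduces to an invocation of an earlier theorem in the paper. First I would check that $f_k$ is $\lambda$-polyharmonic of order $k+1$: this is immediate from Proposition \ref{pro:polykernel} by differentiating under the integral sign, since for each fixed $z \in \DD$ the kernel $P_k(z,\cdot\,|\,\lambda)$ is smooth on $\partial \DD$ and for each fixed $\xi$ the kernel $P_k(\cdot,\xi\,|\,\lambda)$ is smooth on $\DD$. Next, applying $(\Lap - \lambda I)^{k}$ under the integral and using Proposition \ref{pro:polykernel} gives
$$
(\Lap - \lambda I)^{k} f_k = \Poiss_{\lambda} g_k,
$$
the classical $\lambda$-Poisson transform of the continuous datum $g_k$. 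Theorem \ref{thm:dirichlet} (equivalently, Proposition \ref{pro:basic_convergence}(i) with $n=0$) then yields
$$
\lim_{z\to\xi} \frac{(\Lap - \lambda I)^{k} f_k(z)}{\Phi(z\,|\,\lambda)} = g_k(\xi),
$$
which is exactly the Riquier boundary condition of Definition \ref{def:Riq}. Setting $f = f_0 + \dots + f_{n-1}$ then produces a $\lambda$-polyharmonic function of order $n$ by linearity.

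I would then verify the two supplementary limit statements. The first, $f_k/\Phi_k(\cdot\,|\,\lambda) \to g_k$ at the boundary, is exactly Proposition \ref{pro:basic_convergence}(i) applied with parameter $n = k$ to the continuous datum $g_k$; here the use of $\Phi_k$ as normaliser is the whole point of that proposition. The second, $f_j/\Phi_k(\cdot\,|\,\lambda) \to 0$ for $j < k$, I would handle by the factorisation
$$
\frac{f_j(z)}{\Phi_k(z\,|\,\lambda)} = \frac{f_j(z)}{\Phi_j(z\,|\,\lambda)} \cdot \frac{\Phi_j(z\,|\,\lambda)}{\Phi_k(z\,|\,\lambda)}.
$$
The first factor stays bounded near $\xi$ since it converges to $g_j(\xi)$ by the step just established, and in the second factor the dominant asymptotics from Theorem \ref{thm:asymp} make the exponentials $\exp((\mus(\lambda)-1/2)R)$ cancel, leaving a ratio comparable to $R^{j-k}$ when $\lambda \ne -\tfrac14$ (respectively $R^{2(j-k)}$ when $\lambda = -\tfrac14$), which tends to $0$ as $R \to \infty$ because $j<k$.

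No essential obstacle is expected: the corollary really is a corollary of Theorems \ref{thm:dirichlet} and \ref{thm:asymp} combined with Proposition \ref{pro:basic_convergence}(i). The only care required is in tracking which polyspherical function provides the correct normalisation at each stage, and in confirming that the quotients $\Phi_j/\Phi_k$ decay at precisely the polynomial rate given by Theorem \ref{thm:asymp}, so that a bounded factor multiplied by a vanishing one still vanishes.
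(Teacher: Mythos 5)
Your proposal is correct and follows exactly the route the paper intends: the corollary is a direct consequence of Proposition \ref{pro:basic_convergence}(i) (applied with parameter $k$, and with parameter $0$ for the Riquier condition after commuting $(\Lap-\lambda I)^k$ with the integral via Proposition \ref{pro:polykernel}), together with the asymptotics of Theorem \ref{thm:asymp}, which give $\Phi_j(z\,|\,\lambda)/\Phi_k(z\,|\,\lambda)\to 0$ for $j<k$ at the stated polynomial rates. The only cosmetic slip is attributing the convergence statement to Theorem \ref{thm:dirichlet} (which is the uniqueness result) rather than to Proposition \ref{pro:basic_convergence}(i) alone, but you cite the latter as well, so nothing is missing.
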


\medskip

\section{A Fatou theorem for polyharmonic functions}\label{sec:Fatou}

\begin{dfn}
{\bf (i)} For $0<\delta\leqslant\pi$,  
consider the arc $B_\delta=\{e^{\im\phi}\colon |\phi|<\delta\}\subset\partial \DD\,$, and for 
$\zeta=e^{\im\alpha}$ consider the rotated arc 
$B_\delta(\zeta)=\zeta B_\delta = \{e^{\im\phi}\colon |\phi-\alpha|<\delta\}$ with 
measure (normalised arc length) $\leb\bigl(B_\delta(\zeta)\bigr) = \delta/\pi$. 
The \emph{Hardy--Littlewood maximal operator} is defined on functions $g\in L^1(\partial \DD)$  as
$$
\Mcal g(\zeta)=\sup \left\{ \frac{1}{\leb\bigl(B_\delta(\zeta)\bigr)} 
\int_{B_\delta(\zeta)} |g(\xi)|\,d\xi\;\colon\; 0<\delta \leqslant \pi\right\}.
$$
{\bf (ii)} Let $[0,\zeta]$ be the line segment with endpoints $0$ and  $\zeta=e^{\im\alpha}$ in 
the unit disc $\DD$, and for $a\geqslant 0$ consider the \emph{admissible region}, or 
\emph{tubular domain} 
$$
\Gamma_a(\zeta) = \Bigl\{z\in \DD\colon \rho\bigl(z,[0,\zeta] \bigr) \leqslant a \Bigr\}.
$$
The \emph{non-tangential (tubular) maximal operator of width $a \geqslant0$} is defined on 
functions $g\in L^1(\partial \DD)$  as
$$
\Mfrak_a^{(n,\lambda)} g(\zeta)=\sup   \left\{ \,  \biggl|    
 \int_{\partial \DD} \mathcal{K}_{n,\lambda}(z,\xi)\,g(\xi)\, d\xi \biggr| \;\colon\; 
z\in \Gamma_a(\zeta)\,,\;|z| \geqslant r_{n,\lambda}
\right\},
$$
where $\mathcal{K}_{n,\lambda}$ is the kernel introduced in Definition \ref{def:normalizedkernel}
and $r_{n,\lambda}$ is as in Theorem \ref{thm:asymp}.
\\[4pt]
{\bf (iii)} With the same ingredients as in (ii), the \emph{enlarged-admissible region} is
$$
\Gamma_{(a)}(\zeta) = \Bigl\{z\in \DD\colon \rho\bigl(z,[0,\zeta] \bigr) 
\leqslant a + \log \rho(z,0) \Bigr\}.
$$
The \emph{extended maximal operator of width $a$} is defined on 
functions $g\in L^1(\partial \DD)$  as
$$
\Mfrak_{(a)}^{(n,\lambda)} g(\zeta)=\sup   \left\{ \,  \biggl|    
 \int_{\partial \DD} \mathcal{K}_{n,\lambda}(z,\xi)\,g(\xi)\, d\xi \biggr| \colon 
z\in \Gamma_{(a)}(\zeta)\,,\;|z| \geqslant r_{n,\lambda}
\right\}.
$$
\end{dfn}

Both $\Gamma_a(\zeta)$ and $\Gamma_{(a)}(\zeta)$ are (Euclidean) convex subsets of $\DD$
which touch the boundary  $\partial \DD$ only at $\zeta$. In the Euclidean metric, the 
``tube'' $\Gamma_a(\zeta)$ is conical at $\zeta$  with Stolz angle $2\arctan(\sinh a)$. 
On the other hand, the boundary
curve of the larger domain $\Gamma_{(a)}(\zeta)$ is tangent to  $\partial \DD$ at $\zeta$.

The following fact is well-known (see, for instance, \cite[Corollary of Lemma 1.1]{Koranyi&Vagi}).

\begin{pro}\label{prop:HL_is_weak_type_1-1} The Hardy--Littlewood maximal operator 
is of weak type $(1,1)$ and strong type $(p,p)$ for  $1<p\leqslant\infty$.
\end{pro}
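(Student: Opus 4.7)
The proof plan follows the classical Hardy--Littlewood strategy, adapted to the compact setting of the unit circle. First I would establish the weak type $(1,1)$ bound, which is the heart of the matter; the rest follows by soft interpolation arguments.

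For the weak type estimate, the plan is to fix $\alpha > 0$ and study the level set $E_\alpha = \{\zeta \in \partial \DD : \Mcal g(\zeta) > \alpha\}$. For each $\zeta \in E_\alpha$, by definition of the supremum there exists an arc $B_{\delta(\zeta)}(\zeta)$ centred at $\zeta$ with
\[
\int_{B_{\delta(\zeta)}(\zeta)} |g(\xi)|\,d\xi \; > \; \alpha \, \leb\bigl(B_{\delta(\zeta)}(\zeta)\bigr).
\]
The collection $\{B_{\delta(\zeta)}(\zeta) : \zeta \in E_\alpha\}$ forms an open cover of $E_\alpha$. The key step is to apply a Vitali-type covering lemma on the circle: from this cover one extracts a countable pairwise disjoint subfamily $\{B_{\delta_j}(\zeta_j)\}_j$ such that the triples $\{3 B_{\delta_j}(\zeta_j)\}_j$ (arcs with the same centres but three times the radius) cover $E_\alpha$. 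Then
\[
\leb(E_\alpha) \;\leqslant\; \sum_j \leb\bigl(3 B_{\delta_j}(\zeta_j)\bigr) \;=\; 3\sum_j \leb\bigl(B_{\delta_j}(\zeta_j)\bigr) \;<\; \frac{3}{\alpha} \sum_j \int_{B_{\delta_j}(\zeta_j)} |g|\,d\xi \;\leqslant\; \frac{3}{\alpha}\,\|g\|_{L^1(\partial \DD)},
\]
by the disjointness of the selected arcs. This yields the weak type $(1,1)$ inequality $\leb(E_\alpha) \leqslant C\,\|g\|_1/\alpha$.

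The strong type $(\infty,\infty)$ bound is immediate, since every arc average is bounded by $\|g\|_\infty$, so $\|\Mcal g\|_\infty \leqslant \|g\|_\infty$. For intermediate $1 < p < \infty$, I would invoke the Marcinkiewicz interpolation theorem applied to the sublinear operator $\Mcal$: from the weak type $(1,1)$ bound and the strong type $(\infty,\infty)$ bound one obtains the strong type $(p,p)$ inequality $\|\Mcal g\|_p \leqslant C_p \, \|g\|_p$ for all $p \in (1,\infty)$.

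The only point requiring care is the Vitali covering argument on the circle (rather than the line): one must handle arcs that may wrap around $\partial \DD$, but since the total measure is finite and one can always pass to a maximal disjoint subfamily by greedily selecting arcs of largest radius, no genuine obstacle arises. Alternatively, as the authors indicate, one can simply cite \cite[Corollary of Lemma 1.1]{Koranyi&Vagi} for the whole statement.
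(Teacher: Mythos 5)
Your proposal is correct, and it is the standard classical argument. The paper itself does not prove this proposition at all --- it simply records it as well known and cites Kor\'anyi--V\'agi --- so you are supplying a proof where the authors give only a reference. Your chain (Vitali covering on the circle $\Rightarrow$ weak type $(1,1)$; trivial $L^\infty$ bound; Marcinkiewicz interpolation for $1<p<\infty$) is exactly the expected route and is sound. The only point worth tightening is the covering step: with the \emph{infinite} Vitali lemma the dilation constant is usually taken to be $5$ (or $3+\ep$), while the constant $3$ is obtained from the \emph{finite} greedy selection applied to a compact subset of the level set $E_\alpha$, followed by inner regularity of $\leb$; either variant gives the weak $(1,1)$ bound with some absolute constant, and arcs wrapping around $\partial\DD$ cause no difficulty since one may cap $\delta$ at $\pi$. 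None of this affects the validity of your argument.
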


Our aim is to prove the following.

\begin{pro}\label{pro:maximal_functions_inequality}
For every $a\geqslant 0$ and $n\in\N_0$,  
there is a constant $C(a, \lambda)>0$ such that for all $g\in L^1(\partial\DD)$
$$
\begin{aligned}
\Mfrak_a^{(n,\lambda)}\,g &\leqslant C(a, \lambda) \, \Mcal g\,, \quad \text{if }\; 
\lambda \in \C\setminus (-\infty\,,\,-\frac14)\,,\\
\Mfrak_{(a)}^{(n,\lambda)}\,g &\leqslant C(a, \lambda) \, \Mcal g\,, \quad \text{if }\; 
\lambda = -\frac14\,.
\end{aligned}
$$
\end{pro}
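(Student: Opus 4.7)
The plan is to follow the classical template for non-tangential maximal inequalities: reduce to a real eigenvalue, extract the peak shape of the normalised Poisson kernel, and perform a dyadic decomposition on the circle.

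First I would use the rotation invariance $\mathcal{K}_{n,\lambda}(e^{i\alpha}z, e^{i\alpha}\xi) = \mathcal{K}_{n,\lambda}(z,\xi)$ to fix $\zeta = 1$. Lemma \ref{lem:Kcompare} then supplies the pointwise bound
\[
\bigl|\mathcal{K}_{n,\lambda}(z,\xi)\bigr| \leq \widetilde{C}(\lambda)\, \mathcal{K}_{0,\lambda^*}(z,\xi), \qquad |z| \geq r_{n,\lambda},
\]
with $\lambda^* \in [-\tfrac{1}{4}, \infty)$ real, which together with non-negativity of the right-hand side reduces the claim to $n=0$ and real $\lambda^*$; note that $\lambda = -\tfrac{1}{4}$ yields $\lambda^* = -\tfrac{1}{4}$, while any other admissible $\lambda$ yields $\lambda^* > -\tfrac{1}{4}$. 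Writing $z = re^{i\alpha}$ with $R = \rho(z,0)$, a short geometric computation based on $\rho(re^{i\alpha}, r) \sim \tfrac{1}{2}|\alpha|\, e^{R}$ translates admissibility into bounds on the angular offset: $|\alpha| \leq C_a e^{-R}$ for $z \in \Gamma_a(1)$, and $|\alpha| \leq C_a (\log R)\, e^{-R}$ for $z \in \Gamma_{(a)}(1)$.

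Using the identity $P(re^{i\alpha}, e^{i\phi}) = P_r(\phi - \alpha)$ together with Theorem \ref{thm:asymp}, I would derive the peak-shape asymptotics
\[
\mathcal{K}_{0,\lambda^*}(z, e^{i\phi}) \asymp_{a} \frac{e^{R}}{\bigl(1 + e^{2R}\sin^2\bigl((\phi - \alpha)/2\bigr)\bigr)^{\mus(\lambda^*) + 1/2}} \qquad (\lambda^* > -\tfrac{1}{4}),
\]
and, with the extra factor from Theorem \ref{thm:asymp}.B,
\[
\mathcal{K}_{0,-1/4}(z, e^{i\phi}) \asymp_{a} \frac{e^{R}}{R\,\bigl(1 + e^{2R}\sin^2\bigl((\phi - \alpha)/2\bigr)\bigr)^{1/2}}.
\]
That is, the kernel is essentially a plateau of height $\sim e^{R}$ (resp.\ $e^{R}/R$) on an arc of length $\sim e^{-R}$ centred at $\alpha$, with polynomial decay beyond. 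I would then decompose the integration into dyadic rings: $A_{-1} = \{|\phi - \alpha| < \delta_0\}$ and $A_k = \{2^k \delta_0 \leq |\phi - \alpha| < 2^{k+1}\delta_0\}$ with $\delta_0 = e^{-R}$, up to $2^k \lesssim e^{R}$. On each $A_k$ the kernel is essentially constant, and the offset estimates place $A_k$ inside $B_{C \cdot 2^k \delta_0}(1)$ in Case A, and inside $B_{C(2^k + \log R)\delta_0}(1)$ in Case B, so that $\int_{B_\delta(1)} |g|\, d\leb \leq (\delta/\pi)\, \Mcal g(1)$ applies termwise. In Case A, multiplying kernel by mass and summing gives the geometric series $\sum_{k} 2^{-2\mus(\lambda^*) k}$, which converges because $\mus(\lambda^*) > 0$.

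The main obstacle will be Case B, where $\mus(\lambda^*) = 0$ eliminates the geometric decay and the dyadic sum has order $R$ terms. I would split it into two regimes: for $k \geq \log_2(\log R)$ the arc $A_k$ is essentially $B_{C \cdot 2^k \delta_0}(1)$ and each term contributes $O(1/R)$, summing to $O(1)$; for smaller $k$ one must use the full enlarged offset $|\alpha| \leq C(\log R)\delta_0$ in the Hardy--Littlewood bound, producing terms of size $O(\log R/(R \cdot 2^k))$ whose sum is again $O(1)$. The bookkeeping exactly reflects the distinction in the statement: the extra $1/R$ in the $\lambda = -\tfrac{1}{4}$ kernel absorbs precisely one factor of $\log R$ in the permissible angular offset, whereas an analogous attempt in Case A with $\Gamma_{(a)}$ in place of $\Gamma_a$ would leave an unbounded $\log R$ factor already on the central arc $A_{-1}$.
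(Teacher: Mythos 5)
Your overall strategy is sound and, apart from one geometric slip discussed below, it can be made to work; but note that the paper deliberately avoids the dyadic decomposition you propose. After the same reduction to $n=0$ and real $\lambda^*$ via Lemma \ref{lem:Kcompare} and rotation invariance, the paper splits the integral at a \emph{single} threshold $|\phi|\asymp K_r$ and estimates the tail by partial integration against $H(\phi)=\int_0^\phi\bigl(|g(e^{\im\psi})|+|g(e^{-\im\psi})|\bigr)\,d\psi\leqslant 2\phi\,\Mcal g(1)$, which replaces your countably many dyadic rings by one antiderivative computation; the dichotomy $t>1/2$ versus $t=1/2$ there plays exactly the role of your convergent geometric series versus the sum with $O(R)$ terms. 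Both routes are classical, and your Case A is correct as written: the central arc gives $e^R\cdot O(e^{-R})=O(1)$ and the rings give $\sum_k 2^{-2\mus(\lambda^*)k}<\infty$ since $\mus(\lambda^*)>0$.

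The genuine problem is your translation of the enlarged region $\Gamma_{(a)}(1)$ into an angular offset. The approximation $\rho(re^{\im\alpha},r)\sim\tfrac12|\alpha|e^R$ is only legitimate while that quantity stays bounded; the exact relation (this is \eqref{eq:rho} in the paper) is $\sinh\rho\bigl(z,[0,1]\bigr)=\frac{2r|\sin\alpha|}{1-r^2}$ for $|\alpha|<\pi/2$, so the condition $\rho\bigl(z,[0,1]\bigr)\leqslant a+\log R$ reads $|\sin\alpha|\leqslant\frac{1-r^2}{2r}\sinh(a+\log R)$ with $\sinh(a+\log R)\sim\tfrac{e^a}{2}R$, giving $|\alpha|\lesssim R\,e^{-R}$ rather than your $(\log R)\,e^{-R}$. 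As written, your Case B therefore only verifies the bound on a strictly smaller region than $\Gamma_{(a)}(1)$. Fortunately the dyadic argument still closes with the correct offset $R\delta_0$: the central arc contributes at most $\frac{e^R}{R}\cdot C(R+1)\delta_0\,\Mcal g(1)=O(1)\,\Mcal g(1)$, and the $k$-th ring contributes at most $\frac{e^R}{R\,2^{k}}\cdot C(2^{k}+R)\delta_0\,\Mcal g(1)=C\bigl(\tfrac1R+2^{-k}\bigr)\Mcal g(1)$, which sums to $O(1)$ over the $\sim R$ admissible scales. So the repair is routine, but your closing bookkeeping remark --- that the extra $1/R$ in the critical kernel absorbs ``precisely one factor of $\log R$'' --- must be corrected: it absorbs a full factor of $R$, contributed jointly by the enlarged offset and by the number of dyadic scales. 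You should also record, as the paper does, that for $|z|$ bounded away from $1$ the kernel is bounded and the estimate is trivial, since the peak-shape asymptotics you invoke are valid only as $R\to\infty$.
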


Note that the inequality in the critical case is stronger, since the underlying
domain is enlarged. 
By Lemma \ref{lem:Kcompare}, it is sufficient to prove Proposition 
\ref{pro:maximal_functions_inequality}
for $n=0$ (i.e., for $\Lap - \lambda \, I$) and real $\lambda \geqslant -\frac14$, showing that
\begin{equation}\label{eq:zero}
\begin{aligned}
\Mfrak_a^{(0,\lambda)}\,g &\leqslant C(a, \lambda) \, \Mcal g\,\quad 
\text{if }\;\lambda > -\frac14\,,\AND\\
\Mfrak_{(a)}^{(0,\lambda)}\,g &\leqslant C(a, \lambda) \, \Mcal g\,\quad 
\text{if }\;\lambda = -\frac14\,.
\end{aligned}
\end{equation}
That is, we only need to work with the standard spherical functions 
$\Phi(z \,|\, \lambda)$.

In this case, the proof is practically folklore when $\lambda > -\frac14$, 
see \cite{Zygmund}, \cite{Kor} or \cite{Mich},  while the 
extended version for $\lambda = -\frac14$ is contained in a note by {\sc Sj\"ogren}~\cite{Sj}.
However, the extent to which the proof is folklore is such that it is hard to find
a simple version for the hyperbolic Laplacian, and the note \cite{Sj} is also not 
very easily accessible. Therefore, for the reader's convenience, we include
a simple proof (more direct via partial integration than typical versions which decompose 
the unit circle in countably many arcs).

\begin{proof}[{\bf Proof of Proposition \ref{pro:maximal_functions_inequality}}]$\,$
\\
{\bf 1)} As mentioned, it is sufficient to work with $n=0$ and $\lambda$ real. 
Then $\mathcal{K}_{n,\lambda}(z,\xi)\, d\xi$ is a probability measure, and we 
have rotation invariance:
$\;\mathcal{K}_{n,\lambda}(e^{\im  \alpha} z,e^{\im  \alpha}\xi) 
= \mathcal{K}_{n,\lambda}(z,\xi)\,$. Thus, it is 
enough to prove the inequality at $\zeta=1$. 
\\[5pt]
{\bf 2)} For the rest of this proof, we set
\begin{equation}\label{eq:t} 
t = s(\lambda) + 1/2 \AND R^* = \begin{cases} 1\,,&\text{if }\; \lambda > -\frac14\,,\\[.2cm]
                                                             R\,,&\text{if }\; \lambda = -\frac14\,.
                                               \end{cases}
\end{equation}
Thus $t > 1/2$ when $\lambda > -\frac14$, and $t = 1/2$ when
$\lambda = -\frac14$. We can subsume both admissible regions $\Gamma_a(1)$ and $\Gamma_{(a)}(1)$
under
$$
\bigl\{ z = r\,e^{\im \alpha} \in \DD : 
\rho\bigl(z,[0,1] \bigr) \leqslant a + \log R^*\bigr\}\,,
$$
where, as always, $R = \log \dfrac{1+r}{1-r}\,$.
Elementary computations with hyperbolic distance yield for $z = r \,e^{\im \alpha}$
with $r < 1$ and $|\alpha| \leqslant \pi$ that for any $a \geqslant 0$,
\begin{equation}\label{eq:rho}
\rho\bigl(z,[0,1] \bigr) \leqslant a + \log R^*\iff
\underbrace{\frac{1-r^2}{2r} \,\sinh (a+\log R^*)}_{\displaystyle =: K_r}\geqslant 
\begin{cases} |\sin \alpha|\,,&\text{if }\; |\alpha| < \pi/2\,\\
              1\,,&\text{if }\; |\alpha| \geqslant \pi/2\,.  
\end{cases}           
\end{equation}
If $|\alpha| \geqslant \pi/2\,$, this is the same as 
$R \leqslant a + \log R^*\,$.
Thus, there is $r_a \in (0\,,\,1)$  such that when $r \geqslant r_a$ 
then the second of the above cases is excluded, and that $\sinh (a+ R_a) >1$, 
where $R_a = \rho(r_a,0)$.
\\[5pt]
{\bf 3)}  
In view of \eqref{eq:taur}, we need to show that for 
$z = r\,e^{\im \alpha}$ 
in the range of \eqref{eq:rho},
\begin{equation}\label{eq:integral}
\frac{e^R}{2\pi\, R^*} \int_{-\pi}^{\pi} |g(e^{\im \phi})|\, 
\frac{1}{\bigl(1+ \frac{4r}{(1-r)^2}\sin^2\frac{\phi-\alpha}{2}\bigr)^t}\,  d\phi 
\leqslant C(a,\lambda) \, \mathcal{M}g(1).
\end{equation}
If $r \in [0\,,\,r_a)$ then the kernel of the above integral is bounded,
so that the estimate is immediate with a suitable value of $C(a,\lambda)$. So we now
consider the case $r \geqslant r_a\,$, where we know that $|\alpha| < \pi/2$, and $\alpha \to 0$
as $r \to 1$. For $\phi \in (-\pi\,,\,\pi]$, we have $|\phi-\alpha|/2 \leqslant 3\pi/4$, whence there
is $\kappa \in  (0\,,\,1)$ such that 
$|\sin \frac{\phi-\alpha}{2}| \geqslant \kappa \, |\frac{\phi-\alpha}{2}|$, as well as
$|\sin \alpha| \geqslant \kappa \, |\alpha|$. We may assume that $r_a$ is such that 
$2K_r \leqslant \pi$ for $r \geqslant r_a\,$. Now the left hand
side of \eqref{eq:integral} is bounded above by 
$$
\frac{e^R}{2\pi\, R^*} \int_{-\pi}^{\pi} |g(e^{\im \phi})|\,
\frac{1}{\bigl(1+ 
 \frac{r}{\kappa^2(1-r)^2} (\phi-\alpha)^2\bigr)^t}\,  d\phi \,.
$$
We decompose the last integral into 
$\int_{|\phi| < 2K_r/\kappa} + \int_{|\phi| \geqslant 2K_r/\kappa}\,$: for the first part,
$$
\begin{aligned}
\frac{e^R}{2\pi\, R^*} & \int_{-2K_r/\kappa}^{2K_r/\kappa} |g(e^{\im \phi})|\,
\frac{1}{\bigl(1+ 
 \frac{r}{\kappa^2(1-r)^2} (\phi-\alpha)^2\bigr)^t}\,  d\phi \\[.2cm]
 &\leqslant 
 \frac{e^R}{2\pi\, R^*} \int_{-2K_r/\kappa}^{2K_r/\kappa} |g(e^{\im \phi})| \, d\phi 
\leqslant \frac{4K_r\, e^R}{2\pi\,\kappa\, R^*} \, \mathcal{M}g(1) \leqslant \frac{4e^a}{2\pi\,\kappa\, r_a} 
\, \mathcal{M}g(1)\,. 
\end{aligned}
$$
For the second part, we set 
$$
H(\phi) = \int_0^{\phi} \bigl(|g(e^{\im \psi})|+|g(e^{-\im \psi})|\bigr)\, d\psi\,, \quad 
\phi \in [-\pi\,,\,\pi].
$$
Then 
$$
H(\phi) \leqslant 2\phi \, \Mcal g(1)\,.
$$
Observe that, by \eqref{eq:rho} and the choices of $r$ and $\kappa$,
we have $|\alpha| \leqslant K_r/\kappa$, so that  $|\phi| \geqslant 2K_r/\kappa$ implies 
$|\phi - \alpha| \geqslant |\phi|/2$.  We get, with  $c_{\kappa} = (2\kappa)^{2t}/(r_a\,\pi)$, 
and using partial integration
$$
\begin{aligned}
\frac{e^R}{2\pi\, R^*} & \int_{2K_r/\kappa \leqslant |\phi| \leqslant \pi} |g(e^{\im \phi})|
\,\frac{1}{\bigl(1+ \frac{r}{\kappa^2(1-r)^2} (\phi-\alpha)^2\bigr)^t}\,\,  d\phi \\[5pt]
 &\leqslant 
c_{\kappa} \,\frac{(1-r)^{2t-1}}{R^*} \,
\int_{2K_r/\kappa}^{\pi}  \bigl(|g(e^{\im \phi})|+|g(e^{-\im \phi})|\bigr)
\frac{1}{\phi^{2t}}\,  d\phi \\[5pt]
&= c_{\kappa} \,\frac{(1-r)^{2t-1}}{R^*} 
\left( \frac{H(\pi)}{\pi^{2t}} - \frac{H(2K_r/\kappa)}{(2K_r/\kappa)^{2t}} 
  + 2t \int_{2K_r/\kappa}^{\pi} \frac{H(\phi)}{\phi^{2t+1}}\, d\phi\right) \\[5pt]
&\leqslant c_{\kappa} \,\frac{(1-r)^{2t-1}}{R^*} \left(\frac{2\pi}{\pi^{2t}}  
+ 4t \int_{2K_r/\kappa}^{\pi} \frac{1}{\phi^{2t}}\, d\phi\right)
\mathcal{M}g(1)\\[5pt]
&\leqslant \begin{cases} c_{\kappa} 
     \left( \dfrac{2}{\pi^{2t-1}} + \dfrac{4t}{2t-1} \Bigl(\dfrac{\kappa}{\sinh a}\Bigr)^{2t-1}
     \right) \mathcal{M}g(1)\,,&\text{if }\; t > 1/2,\\[.5cm]
     c_{\kappa} \left( \dfrac{2+2\log\pi}{R_a} + 2\right) \mathcal{M}g(1)\,,&\text{if }\; t = 1/2\,,
     \end{cases}
\end{aligned}
$$
because $-\log(2K_r/\kappa) \leqslant R$ by the choices of $r_a$ and $\kappa$.
This concludes the proof.
\end{proof}

As a consequence, we have the following convergence theorem (in the discrete setting of trees, see \cite[Theorem 1, Theorem 3]{KP} for $\lambda$-harmonic functions, and \cite[Theorem 4.6]{Sava&Woess} for regular trees and $\lambda$-polyharmonic functions).
\begin{dfn}\label{def:admconv}
A function $f\colon\DD\to\C$ \emph{converges admissibly, ($\equiv$ non-tangentially)}  
resp. \emph{enlarged-admissibly} 
at a boundary point $\xi\in\partial\DD$ if, for every $a\geqslant 0$,  the limit 
$$
\lim_{\Gamma_a(\xi) \ni z\to\xi} f(z)\,,\quad  \text{resp.} \quad 
\lim_{\Gamma_{(a)}(\xi) \ni z\to\xi} f(z)
$$ 
exists and is finite. 
\end{dfn}

\begin{theorem}[Fatou theorem for $\lambda$-polyharmonic functions]\label{thm:Fatou}
Let $\lambda\in\C\setminus (-\infty\,,\,-\frac14)$ and $\nu$ a Borel measure on $\partial \DD$. 
Then the normalised $\lambda$-polyharmonic function
$$
\frac{\Poiss_{n,\lambda}\nu(z)}{\Phi_n(z \,|\, \lambda)} 
\qquad (\,|z| \geqslant r_{n,\lambda}\,)
$$ 
converges admissibly at $\leb$-almost every point in $\partial \DD$. 
The limit is  the Radon--Nykodim derivative $d\nu^{\text{ac}}/d\leb$, where (recall) $\leb$ is
the normalised arc length measure, and $\nu^{\text{ac}}$ is the absolutely continuous part 
of $\nu$.

If $\lambda = -\frac14$, convergence is even enlarged-admissible.
\end{theorem}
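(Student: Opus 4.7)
My plan is to follow the classical template for Fatou-type theorems, in which the maximal inequality of Proposition \ref{pro:maximal_functions_inequality} plays the role that the Hardy--Littlewood inequality plays for the Euclidean Poisson integral. Decompose $\nu = \nu^{\text{ac}} + \nu^{\text{s}}$ with density $g = d\nu^{\text{ac}}/d\leb \in L^1(\partial \DD)$, so that
\[
\frac{\Poiss_{n,\lambda}\nu(z)}{\Phi_n(z\,|\,\lambda)} = \frac{\Poiss_{n,\lambda}g(z)}{\Phi_n(z\,|\,\lambda)} + \frac{\Poiss_{n,\lambda}\nu^{\text{s}}(z)}{\Phi_n(z\,|\,\lambda)}.
\]
I will handle the two parts separately, working throughout in $\Gamma_a(\zeta)$ (respectively $\Gamma_{(a)}(\zeta)$ when $\lambda = -\frac14$, which is carried by the strengthened bound in Proposition \ref{pro:maximal_functions_inequality}). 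Combining that proposition with the weak-type $(1,1)$ estimate of Proposition \ref{prop:HL_is_weak_type_1-1} immediately shows that $\Mfrak_a^{(n,\lambda)}$ is of weak type $(1,1)$, which is the essential ingredient below.

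For the absolutely continuous part, given $\ep > 0$ I approximate $g$ by a continuous function $g_\ep \in \mathcal{C}(\partial \DD)$ with $\|g - g_\ep\|_{L^1} < \ep$. By Proposition \ref{pro:basic_convergence}(i), $\Poiss_{n,\lambda}g_\ep/\Phi_n(\cdot\,|\,\lambda)$ extends continuously to $\partial \DD$ with boundary value $g_\ep$, so the admissible oscillation
\[
\omega(\zeta) := \limsup_{\Gamma_a(\zeta) \ni z \to \zeta} \left| \frac{\Poiss_{n,\lambda}g(z)}{\Phi_n(z\,|\,\lambda)} - g(\zeta) \right|
\]
is dominated by $\Mfrak_a^{(n,\lambda)}(g - g_\ep)(\zeta) + |g(\zeta) - g_\ep(\zeta)|$ for every $\zeta$. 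The weak-type bound therefore yields $\leb\{\omega > \eta\} \leqslant C\ep/\eta$ for every $\eta > 0$, and letting $\ep \to 0$ forces $\omega = 0$ $\leb$-a.e., giving admissible convergence of $\Poiss_{n,\lambda}g(z)/\Phi_n(z\,|\,\lambda)$ to $g(\zeta)$ at $\leb$-a.e.\ $\zeta$.

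For the singular part, the key input is Lebesgue differentiation on $\partial \DD$: at $\leb$-a.e.\ $\zeta$, $|\nu^{\text{s}}|(B_\delta(\zeta))/\leb(B_\delta(\zeta)) \to 0$ as $\delta \to 0$. Fix such a $\zeta$ and $\ep > 0$, pick $\delta_0$ with $|\nu^{\text{s}}|(B_\delta(\zeta))/\leb(B_\delta(\zeta)) < \ep$ for all $\delta \leqslant \delta_0$, and split $\nu^{\text{s}} = \nu_1 + \nu_2$ with $\nu_1 = \nu^{\text{s}}|_{B_{\delta_0}(\zeta)}$. The transform of $\nu_2$ tends to zero along $\Gamma_a(\zeta) \ni z \to \zeta$ because Lemma \ref{lem:tozero}, combined with the rotation invariance of $\mathcal{K}_{n,\lambda}$, forces $\mathcal{K}_{n,\lambda}(z,\xi) \to 0$ uniformly for $\xi$ in the support of $\nu_2$, which lies in $\partial \DD \setminus B_{\delta_0}(\zeta)$. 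For $\nu_1$, I observe that the proof of Proposition \ref{pro:maximal_functions_inequality} carries over with the primitive $H(\phi)$ replaced by the cumulative distribution $F(\phi) = |\nu_1|(\{e^{\im\psi} : 0 < \psi \leqslant \phi\}) + |\nu_1|(\{e^{-\im\psi} : 0 < \psi \leqslant \phi\})$; this gives
\[
\sup_{z \in \Gamma_a(\zeta),\,|z|\geqslant r_{n,\lambda}} \left| \int_{\partial \DD} \mathcal{K}_{n,\lambda}(z,\xi)\, d\nu_1(\xi) \right| \leqslant C(a,\lambda)\, \sup_{\delta > 0} \frac{|\nu_1|(B_\delta(\zeta))}{\leb(B_\delta(\zeta))} \leqslant C(a,\lambda)\, \ep,
\]
where the last inequality uses $|\nu_1|(B_\delta(\zeta)) \leqslant |\nu^{\text{s}}|(B_{\min(\delta,\delta_0)}(\zeta)) \leqslant \ep\, \leb(B_\delta(\zeta))$. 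Since $\ep$ is arbitrary, the singular contribution vanishes admissibly at $\leb$-a.e.\ $\zeta$, and the enlarged-admissible case $\lambda = -\frac14$ is identical with $\Gamma_{(a)}$ and $\Mfrak_{(a)}^{(n,\lambda)}$ throughout. The main technical obstacle I anticipate is precisely this measure-valued extension of Proposition \ref{pro:maximal_functions_inequality}: the partial-integration step in its proof must be re-derived as a Riemann--Stieltjes identity against $F$, which is routine but must be verified with care to ensure the same constants control $\sup_\delta |\nu_1|(B_\delta(\zeta))/\leb(B_\delta(\zeta))$.
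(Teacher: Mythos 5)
Your proof is correct, and for the absolutely continuous part it is essentially the paper's argument: both rest on the maximal inequality of Proposition \ref{pro:maximal_functions_inequality} together with the weak type $(1,1)$ bound of Proposition \ref{prop:HL_is_weak_type_1-1}, plus Proposition \ref{pro:basic_convergence}(i) applied to continuous approximants of $g$ (the paper packages the limit $\ep\to 0$ as a Borel--Cantelli argument along a sequence $g_k$ with $\|g-g_k\|_1<2^{-k}$, you phrase it as a weak-type estimate on the admissible oscillation; these are interchangeable). Where you genuinely diverge is the singular part. The paper disposes of it by covering the carrier null set of $\nu^{\text{s}}$ by open arcs of small total $\leb$-measure and invoking Lemma \ref{lem:tozero} to kill the transform at boundary points off those arcs; this is short but uses no quantitative control of $\nu^{\text{s}}$ near $\zeta$. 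You instead use the differentiation theorem for singular measures, split $\nu^{\text{s}}$ into its restrictions to $B_{\delta_0}(\zeta)$ and its complement, handle the far piece with Lemma \ref{lem:tozero} and rotation invariance, and control the near piece by a measure-valued version of the maximal inequality, i.e.\ by $\sup_\delta |\nu_1|(B_\delta(\zeta))/\leb(B_\delta(\zeta))$. This costs you the extra (routine but nontrivial) verification that the partial-integration step in the proof of Proposition \ref{pro:maximal_functions_inequality} goes through as a Riemann--Stieltjes identity against the cumulative mass function $F$ -- you correctly flag this, and the bound $F(\phi)\leqslant (\phi/\pi)\sup_\delta|\nu_1|(B_\delta(\zeta))/\leb(B_\delta(\zeta))$ does make the argument close -- but it buys a more robust and quantitative treatment: it avoids any covering of the carrier set (which for a non-compact null set requires a regularity step the paper glosses over) and identifies the admissible limit directly as the symmetric derivative of $\nu$. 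Both routes are valid; yours is longer but arguably tighter on the singular part.
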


\begin{proof} It is well-known that we can assume $\nu$ to be absolutely continuous 
with respect to $\leb$. 
We prove this fact for the sake of completeness: if $\nu$ is singular with respect to 
$\leb$, then there is an $\leb$-null set $E\subset\partial\DD$ such that $\nu(\DD\setminus E)=0$.
For every $\ep > 0$, let $\{A_j\subset\partial\DD\colon j=1,\dotsc,n\}$ be a finite collection of 
open arcs covering $E$ with $\leb\bigl(\bigcup_{j=1}^n A_j\bigr)<\ep\,$. Let 
$U=\bigcup_{j=1}^n A_j$. Then 
$$
\left|\frac{f(z)}{\Phi_n(z\barra\lambda)}\right| \leqslant 
\int_U \frac{\bigl|P(z,\xi \,|\, \lambda)\,\hor(z,\xi \,|\, \lambda)^n\bigr|}
{|\Phi_n(z\barra\lambda)|}\,d\nu(\xi)\,,
$$
which tends to $0$ when $z \to \zeta \in \partial \DD \setminus U$ by Lemma \ref{lem:tozero}.

So we can assume that $\nu$ is absolutely continuous with respect to the normalised Lebesgue measure 
$\leb$, with  $g=d\nu/d\leb \in L^1(\partial\DD,\leb)$. The rest of the proof is the 
continuous analogue of \cite[Theorems 1 and 3]{KP} and \cite[Theorem 4.6]{Sava&Woess}. 
In brief, we can find a sequence $(g_k)$ in $\mathcal{C}(\partial \DD)$ such that
$\| g - g_k \| < 1/2^k$, and by propositions \ref{prop:HL_is_weak_type_1-1}
and \ref{pro:maximal_functions_inequality},
$$
\sum_k \leb\bigl[\Mfrak_a^{n,\lambda}(g-g_k) \geqslant \ep \bigr] <\infty\,.
$$ 
By the Borel-Cantelli Lemma, 
$$
\lim_{k \to \infty} \Mfrak_a^{n,\lambda}(g-g_k)(\xi) = 0 \quad \text{for $\leb$-almost every }\;
\xi \in \partial \DD.
$$
We can now apply Proposition \ref{pro:basic_convergence}(i) to each of 
the $g_k$ to get the proposed convergence at all those points $\xi\in \partial \DD\,$.
\end{proof}

Let us call a $\lambda$-polyharmonic function $f$ \emph{regular} if all the analytic functionals
$\nu_k$ in the boundary representation of Theorem \ref{thm:poly}, or better (equivalently) formula 
\eqref{eq:poly}, are complex Borel measures on $\partial \DD$. The fact that 
$\Phi_k(r \,|\, \lambda)/\Phi_n(r \,|\, \lambda) \to 0$ for $k < n$, when $r \to 1$, 
yields the following.

\begin{cor}\label{cor:regular}  For $\lambda \in \C\setminus (-\infty\,,\,-\frac14)$, let 
$f$ be a regular $\lambda$-polyharmonic function of order $n+1$ and
$\nu_n$ the highest-order representing measure of $f$ in \eqref{eq:poly}. Then 
$f(z)/\Phi_{n}(z \,|\, \lambda)$ converges admissibly (resp. enlarged-admissibly, if 
$\lambda = -\frac14$)
at almost every $\xi \in \partial \DD$. The limit function 
is the Radon--Nykodim derivative  $d\nu_n^{\text{ac}}/d\leb\,$.
\end{cor}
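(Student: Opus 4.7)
The plan is to use the decomposition \eqref{eq:poly},
$$
f(z) = \sum_{k=0}^n f_k(z)\,,\qquad f_k = \Poiss_{k,\lambda}\nu_k\,,
$$
and to analyse each normalised summand $f_k(z)/\Phi_n(z \,|\, \lambda)$ separately,
then combine the a.e.\ results at the end.

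For the top-order term $k=n$, the conclusion is already contained in
Theorem \ref{thm:Fatou}: since $\nu_n$ is a complex Borel measure on $\partial\DD$,
the quotient $f_n(z)/\Phi_n(z \,|\, \lambda)$ converges admissibly (respectively
enlarged-admissibly, when $\lambda=-\frac14$) at $\leb$-a.e.\ $\xi \in \partial\DD$ to
$d\nu_n^{\text{ac}}/d\leb$. For each lower-order term $k < n$, I would write
$$
\frac{f_k(z)}{\Phi_n(z \,|\, \lambda)}
= \frac{f_k(z)}{\Phi_k(z \,|\, \lambda)}\cdot
\frac{\Phi_k(z \,|\, \lambda)}{\Phi_n(z \,|\, \lambda)}\,.
$$
Applying Theorem \ref{thm:Fatou} to $f_k$, the first factor has a finite admissible
(resp.\ enlarged-admissible) limit at $\leb$-a.e.\ $\xi$, namely
$d\nu_k^{\text{ac}}/d\leb\,(\xi)$. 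The second factor is \emph{radial} because both
polyspherical functions depend only on $|z|$. Hence $\Phi_k/\Phi_n$ is a function of
$R = \rho(z,0)$ alone, and by Theorem \ref{thm:asymp} one has, as $R\to\infty$,
$$
\frac{\Phi_k(z \,|\, \lambda)}{\Phi_n(z \,|\, \lambda)}
\sim \operatorname{const}\cdot R^{k-n}\quad\text{if }\lambda\ne -\tfrac14\,,\qquad
\frac{\Phi_k(z \,|\, \lambda)}{\Phi_n(z \,|\, \lambda)}
\sim \operatorname{const}\cdot R^{2(k-n)}\quad\text{if }\lambda = -\tfrac14\,,
$$
both of which tend to $0$ since $k < n$. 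Because any admissible region $\Gamma_a(\xi)$,
or the enlarged region $\Gamma_{(a)}(\xi)$ in the critical case, is contained in $\DD$
and touches $\partial\DD$ only at $\xi$, one has $R \to \infty$ as $z\to\xi$ inside
the region; therefore $\Phi_k(z \,|\, \lambda)/\Phi_n(z \,|\, \lambda) \to 0$ along the
admissible approach. Multiplying by the finite admissible limit of $f_k/\Phi_k$ yields
admissible convergence of $f_k(z)/\Phi_n(z \,|\, \lambda)$ to $0$ at $\leb$-a.e.\ $\xi$.

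To conclude, I would let $E_k\subset\partial\DD$ be the $\leb$-null exceptional set for
the $k$-th term ($k=0,1,\dots,n$) and take the union $E = \bigcup_{k=0}^n E_k\,$, still
a $\leb$-null set. For every $\xi\in\partial\DD\setminus E$ and every $a\geqslant 0$,
summing the admissible (resp.\ enlarged-admissible) limits gives
$$
\lim_{\Gamma_a(\xi)\ni z\to\xi}\frac{f(z)}{\Phi_n(z \,|\, \lambda)}
= \frac{d\nu_n^{\text{ac}}}{d\leb}(\xi)\,,
$$
as claimed. The only subtle point is ensuring that the pointwise admissible limit of
$f_k/\Phi_k$ genuinely multiplies against the (deterministic, radial) vanishing
factor $\Phi_k/\Phi_n$; this is immediate because the existence of a finite
admissible limit means $|f_k(z)/\Phi_k(z \,|\, \lambda)|$ stays bounded on some tail of
$\Gamma_a(\xi)\cap\{z:|z|\geqslant r_0\}$, so multiplication by a factor tending to zero
produces a sequence that tends to zero. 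No maximal-function machinery beyond what is
already used in Theorem \ref{thm:Fatou} is needed; the main (very mild) obstacle is
just the bookkeeping of finitely many exceptional sets and the observation that the
radial factor $\Phi_k/\Phi_n$ decays to $0$ along every admissible approach, which
Theorem \ref{thm:asymp} supplies at once.
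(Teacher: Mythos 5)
Your proposal is correct and is exactly the argument the paper intends: the corollary is stated as an immediate consequence of Theorem \ref{thm:Fatou} applied to each term of the decomposition \eqref{eq:poly} together with the fact (from Theorem \ref{thm:asymp}) that the radial quotient $\Phi_k(z\,|\,\lambda)/\Phi_n(z\,|\,\lambda)\to 0$ for $k<n$. Your write-up merely makes the bookkeeping of the finitely many null sets explicit, which matches the paper's one-line justification.
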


Compare the following  with \cite{SchBe}.

\begin{cor}\label{cor:radial}  For $\lambda \in \C\setminus (-\infty\,,\,-1/4)$, the only 
radial $\lambda$-polyharmonic functions of order $n+1$ on $\DD$
are the linear combinations of the $\lambda$-polyspherical functions $\Phi_0,\dotsc,\Phi_n$.
\end{cor}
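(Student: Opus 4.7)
The plan is to combine the integral representation of Theorem \ref{thm:poly} with the rotation invariance of the kernels $P_k$, and then to identify all rotation-invariant analytic functionals on $\partial \DD$. Concretely, write
$$ f(z) = \sum_{k=0}^n \Poiss_{k,\lambda}\nu_k(z) $$
via \eqref{eq:poly}, where the analytic functionals $\nu_0, \dotsc, \nu_n$ are uniquely determined by $f$. Denote by $R_\alpha$ the rotation $\xi \mapsto e^{\im\alpha}\xi$; since $g \mapsto g \circ R_\alpha$ is a continuous automorphism of $\mathcal{H}(\partial \DD)$, the formula $(R_\alpha^{\sharp}\nu)(g) := \nu(g \circ R_\alpha)$ lifts rotations to an action on analytic functionals. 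The rotation identity \eqref{eq:rotation} implies $P_k(e^{\im\alpha}z, \xi \,|\, \lambda) = P_k(z, e^{-\im\alpha}\xi \,|\, \lambda)$, and transporting this into the pairing with $\nu_k$ yields
$$ f(e^{\im\alpha}z) = \sum_{k=0}^n \Poiss_{k,\lambda}(R_{-\alpha}^{\sharp}\nu_k)(z). $$
Radiality of $f$ means $f(e^{\im\alpha}z) = f(z)$ for every $\alpha \in \R$, so the uniqueness clause of Theorem \ref{thm:poly} forces every $\nu_k$ to be rotation-invariant.

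Next, I would describe the rotation-invariant analytic functionals on $\partial \DD$. Invariance of $\nu$ forces $\nu(e^{\im m\phi}) = 0$ for every $m \ne 0$, since $\nu(e^{\im m(\phi+\alpha)}) = e^{\im m\alpha}\,\nu(e^{\im m\phi})$ must equal $\nu(e^{\im m\phi})$ for every $\alpha \in \R$. By the Fourier description recalled in \eqref{eq:nuaction}, the value $\nu(g)$ therefore depends only on the mean $g_0 = \int_{\partial\DD} g\, d\leb$, so $\nu$ is a scalar multiple of normalised Lebesgue measure. Writing $\nu_k = c_k \, \leb$, Definition \ref{def:polysph} gives $\Poiss_{k,\lambda}\nu_k = c_k \, \Phi_k(\,\cdot\,|\,\lambda)$, and summing over $k$ produces the desired linear combination.

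The only mildly delicate point is verifying that $R_\alpha^{\sharp}$ preserves analytic functionals and reducing rotation invariance to the vanishing of all nonzero Fourier modes; once these are in place, the conclusion drops out of the uniqueness in Theorem \ref{thm:poly}. The hypothesis $\lambda \notin (-\infty,-\tfrac14)$ enters only through the availability of Theorem \ref{thm:poly}; note also that the $\Phi_k(\,\cdot\,|\,\lambda)$ are in fact linearly independent by their distinct asymptotic orders in Theorem \ref{thm:asymp}, so the representing coefficients $c_k$ are unique, although the statement itself requires only existence.
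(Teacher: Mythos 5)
Your proof is correct. The paper states this corollary without proof, and the argument the authors presumably have in mind is the short induction that bypasses boundary representations altogether: if $f$ is radial and $(\Lap-\lambda I)^{n+1}f=0$, then $h=(\Lap-\lambda I)^n f$ is radial and $\lambda$-harmonic, hence $h=c_n\,\Phi(\cdot\,|\,\lambda)$ by the uniqueness (up to scalars) of the radial $\lambda$-eigenfunction -- the very fact invoked in the proof of Theorem \ref{thm:dirichlet} -- so $f-c_n\Phi_n$ is radial of order $n$ by Lemma 3.2, and one inducts. Your route instead pushes the rotation action through the representation \eqref{eq:poly}: you use uniqueness of the $\nu_k$ to conclude they are rotation-invariant, then classify rotation-invariant analytic functionals via their Fourier coefficients as scalar multiples of $\leb$. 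All the steps check out: $R_\alpha^\sharp$ is the adjoint of a continuous automorphism of $\mathcal{H}(\partial\DD)$, the identity $P_k(e^{\im\alpha}z,\xi\,|\,\lambda)=P_k(z,e^{-\im\alpha}\xi\,|\,\lambda)$ follows from \eqref{eq:rotation}, and the vanishing of the nonzero Fourier modes together with \eqref{eq:nuaction} does force $\nu=c\,\leb$. Your approach is heavier but buys something the elementary induction does not: it shows directly that the representing functionals of a radial polyharmonic function are multiples of Lebesgue measure, which, with the linear independence of the $\Phi_k$ that you note, gives uniqueness of the coefficients for free. One small correction: the hypothesis $\lambda\notin(-\infty,-\frac14)$ does not in fact enter your argument at all, since Theorem \ref{thm:poly} holds for every $\lambda\in\C$; your proof (like the inductive one) actually establishes the corollary for all complex $\lambda$, and the restriction in the statement appears to be inherited from the normalisation concerns of the surrounding sections rather than being needed here.
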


\section{Examples, complements, open problems}\label{sec:exa}

In this section, we study examples of (hyperbolically) harmonic and bi-harmonic
functions which are not regular. We focus on the case $\lambda = 0$,
where of course $\mus(0)+1/2 =1$.

First, for $0 \leqslant r < 1$ and $\xi \in \partial \DD$,
$$
P(r,\xi) = \frac{1-r^2}{1+r^2-r\xi - r/\xi} = \sum_{n \in \Z} r^{|n|}\,\xi^n.
$$
Thus, for any fixed $z =r \, e^{\im \alpha} \in \DD$, the Fourier expansion of the Poisson kernel
in the boundary variable $\xi = e^{\im \phi}$ is
$$
P(r \, e^{\im \alpha}, e^{\im \phi}) = 
\sum_{n \in \Z} r^{|n|}\, e^{-\im n \alpha}\, e^{\im n \phi}.
$$
Next, we want do determine the Fourier expansion of $-\hor(z,\xi)= \log P(z,\xi)$.
By \eqref{eq:extendedP},
$$
\log P(r,\xi) 
= \log(1-r^2) - \log(\xi-r) - \log\Bigl(\frac{1}{\xi}-r\Bigr)
= \log(1-r^2) + \sum_{0 \ne n \in \Z} \frac{r^{|n|}}{|n|}\, \xi^n\,,
$$
so that  we have the Fourier expansion
$$
\log P(r \, e^{\im \alpha}, e^{\im \phi})  = \log(1-r^2) 
+ \sum_{0 \ne n \in \Z} \frac{r^{|n|}}{|n|}\, e^{-\im n \alpha}\, e^{\im n \phi}.
$$
Now we can write 
$$
P(r,\xi) \log P(r,\xi) = \sum_{n \in \Z} r^{|n|}\, d_{|n|}(r)\, \xi^n\,,
$$
since the coefficients of $\xi^n$ and $\xi^{-n}$ must coincide.
For $n \geqslant 0$,
$$
\begin{aligned}
r^n \,d_n(r) &=  r^n \log(1-r^2) + \sum_{0 \ne k \in \Z} r^{|n-k|} \frac{r^{|k|}}{|k|}\\
&= r^n \log(1-r^2) + \underbrace{\sum_{k=1}^{\infty} \frac{r^{n+2k}}{k}}_{\displaystyle = -r^n \log(1-r^2)}
                   + \sum_{k=1}^n \frac{r^n}{k} + \sum_{k=n+1}^{\infty} \frac{r^{2k-n}}{k}\,,\\
\end{aligned}
$$
whence 
\begin{equation}\label{eq:dn}
d_n(r) = \sum_{k=1}^n \frac{1}{k} + \sum_{k=1}^{\infty} \frac{r^{2k}}{k+n}\,.
\end{equation}
Note that $d_n(r)$ is in fact a function of $r^2$.
We have
$$
d_n(r) - d_{n-1}(r) = \sum_{k=0}^{\infty} \frac{(1-r^2)r^{2k}}{k+n} \geqslant 0\,,
$$
so that 
$$
d_n(r) = d_0(r) + (1-r^2)\sum_{k=0}^{\infty} \frac{r^{2k}}{k} \sum_{m=1}^n\frac{k}{k+n}\,,
$$
whence
\begin{equation}\label{eq:dnlog}
\log \frac{1}{1-r^2} = d_0(r) \leqslant d_n(r) \leqslant 
             \displaystyle \bigl( 1 + n(1-r^2) \bigr)\,\log \frac{1}{1-r^2}\,.
\end{equation}
We conclude that the Fourier expansion of the biharmonic Poisson kernel is 
$$
P_1(r \, e^{\im \alpha}, e^{\im \phi}) 
=  P(r \, e^{\im \alpha}, e^{\im \phi})\, \log P(r \, e^{\im \alpha}, e^{\im \phi}) 
=  \sum_{n \in \Z} r^{|n|}\,d_{|n|}(r)\, e^{-\im n \alpha}\, e^{\im n \phi}.
$$
\begin{dfn}\label{def:assoc}
 Let $h(z)$ be a harmonic function on $\DD$ and let $\nu^h$ be 
 the analytic functional on $\partial \DD$ in its Poisson representation.
 The \emph{associated biharmonic function} is
 $$
 f_h(z) = \int_{\partial \DD} P_1(z,\xi)\, d\nu^h(\xi)\,.
 $$
\end{dfn}

Now, let us start with an analytic, whence harmonic function 
\begin{equation}\label{eq:hz}
h(z) = \sum_{n=0}^{\infty} h_n\, z^n\,, \quad  
\limsup_{n \to \infty} |h_n|^{1/n} \leqslant 1
\end{equation}
on $\DD$. We compute the Fourier coefficients
$\nu_n^h = \nu^h(e^{-\im  n\phi})$ of the corresponding 
analytic functional $\nu^h$, that is  
$$
h(z) = \int_{\partial\DD} P(z,\xi)\, d\nu^h(\xi) = 
\sum_{n \in \Z} r^{|n|}\, e^{-\im n \alpha}\, \overline{\nu_n^h}\,,
$$
where $z = r \, e^{-\im \alpha}$. Comparison with 
$$
h(z) = \sum_{n=0}^{\infty} h_n\, r^n\,  e^{\im  n\alpha}
$$
yields
\begin{equation}\label{eq:nunh}
\nu_n^h = \begin{cases} \overline{h\,}_{\!-n}\,,&\text{if }\; n \leqslant 0\,,\\
                      0\,,&\text{if }\; n > 0\,. 
        \end{cases}
\end{equation}
Then the associated biharmonic function is
$$
f_h(z) = \sum_{n=0}^{\infty} h_n\, d_n(|z|)\, z^n\,.
$$
Below we shall use the fact that \eqref{eq:dnlog} implies
\begin{equation}\label{eq:nhn}
\left| f_h(z) - \log\frac{1}{1-r^2}h(z) \right| 
\leqslant (1-r^2) \sum_{n=1}^{\infty} n \,|h_n|\, r^n
\end{equation}
with $\log\frac{1}{1-r^2} \sim R$ as $r = |z| \to 1$. 
Moreover, the real part
$$
\Re h(z) = \sum_{n=0}^{\infty} h_n\, r^n \,
\frac{h_n e^{\im n \alpha} + \overline{h\,}_{\!n}e^{-\im n \alpha}}{2}
$$
is harmonic, and the Fourier coefficients of the corresponding 
analytic functional $\nu^{\Re h}$ are 
$$
\nu_n^{\Re h} = \begin{cases} \Re h_0\,,&\text{if }\; n=0,\\[3pt]
                              h_n/2\,,&\text{if }\; n>0,\\[3pt]
                              \overline{h\,}_{\!-n}/2\,,&\text{if }\; n<0\,.
                \end{cases}
$$ 
(Indeed, every real harmonic function arises as the real part of an analytic function.)
It follows that
$$
f_{\Re h(z)}  = -\int_{\partial \DD} P_1(z,\xi)\, d\nu^{\Re h}(\xi)
= \Re f_h(z)\,.
$$

\smallskip

\begin{disc}\label{disc:bounded}
Euclidean and hyperbolic harmonic functions coincide. We know that if a 
harmonic function $h$ is bounded then its representing analytic functional
is in fact a measure with bounded density with respect to the Lebesgue measure
on $\partial \DD$. Euclidean biharmonic functions are of the form $h_1(z) + r^2 h_2(z)$
with $h_1\,,h_2$ harmonic. The Euclidean biharmonic kernel is  $r^2 P(z,\xi)$, and no
normalisation is in place. In this context, {\sc Mazalov}~\cite{Maz} provides an
example of a \emph{bounded} biharmonic function which has no radial limits at the boundary.
The function is of the form $(1-r^2)\,\Re h(z)$ where $h(z)$ is as in \eqref{eq:hz} with
a lacunary sequence of coefficients $h_n \geqslant 0$. It is bounded, while $h(z)$ is not bounded.
What would be an analogue in the hyperbolic case$\,$? Since the biharmonic kernel needs to
be normalised by $\Phi_1(z|0) \sim R \sim |\log(1-r)|$ as $R \to \infty$ (recall that 
$R= \rho(z,0)$ and $r = |z|$), we would look for a
hyperbolically biharmonic function $f(z)$ such that $f(z)/|\log(1-r)|$ is bounded but has 
no radial limits.

\end{disc}

Now suppose that $h(z)$ is such that $h_n \geqslant 0$ and such that $f(z)/|\log(1-r)|$ is bounded.
Then we have by \eqref{eq:dnlog} for $|z| = r$ 
$$
|h(z)| \leqslant h(r) \leqslant 
\sum_{n=0}^{\infty} h_n \frac{d_n(r)}{d_0(r)} r^n 
\leqslant \frac{f_h(r)}{|\log(1-r)|}\,. 
$$
Hence also the harmonic function $h(z)$ is bounded, the (common) representing analytic 
functional of $h(z)$ and $f_h(z)$ is a measure with bounded density and 
$f_h(z)/|\log(1-r)|$ has admissible limits almost everywhere on $\partial \DD$ by 
Theorem \ref{thm:Fatou}.  It is also worth mentioning 
that $f_h(z)/|\log(1-r)| - h(z)$ has admissible 
limit $0$ almost everywhere.\hss\qed

\begin{ques}\label{question:measure}
Let $\nu$ be an analytic functional on $\partial \DD$ and 
$$
f(z) = \Poiss_{1,0}\nu(z) = \int_{\DD} P_1(z,\xi) \, d\nu(\xi)
$$
be such that $f(z)/R$ is bounded. Is it true that $\nu$ must be a measure with bounded
density$\,$?
\end{ques}

\medskip

\noindent
\textbf{On an example of Borichev.\footnote{We acknowledge literature hints of 
Fausto Di Biase (Pescara) which led us to the work of Borichev. We are particularly 
grateful to Alexander Borichev (Marseille) who indicated this clever example to us.}}\\
The following interesting example is closely related to the results and methods  
in the paper by {\sc Borichev et al.}~\cite{Bor}.

\begin{pro}\label{pro:borichev} There is a harmonic function $h(z)$ on $\DD$ such that
$h(z)/|\log(1-r)|$ is bounded, but has no radial limits at any point of $\partial \DD$. 
\end{pro}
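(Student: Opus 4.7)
\medskip

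\noindent\textbf{Proof proposal.} The plan is to construct $h$ as the real part of a lacunary analytic series, in the spirit of \cite{Bor}. Fix a lacunary sequence of positive integers $(n_k)$ with $n_{k+1}/n_k\geqslant q>1$ (for instance $n_k=2^k$) and a bounded sequence of coefficients $a_k\in\C$ to be specified. Set
\[
h(z) \;=\; \Re \sum_{k=1}^\infty a_k\, z^{n_k},
\]
which is harmonic on $\DD$ as the real part of an analytic function.

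The upper bound $|h(z)|\leqslant C|\log(1-r)|$ is routine. Setting $\rho=1-r$ and letting $K=K(r)$ be the largest index with $n_K\rho\leqslant 1$, lacunarity forces $K\asymp\log(1/\rho)\asymp|\log(1-r)|$, and
\[
\sum_k |a_k|\,r^{n_k}\;\leqslant\;\|a\|_\infty K\,+\,\sum_{k>K}|a_k|\,e^{-n_k\rho},
\]
where the tail is dominated by a geometric series thanks to lacunarity, hence $O(1)$. This yields the logarithmic bound on $h$.

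The heart of the matter is to arrange that $h(re^{i\theta})/|\log(1-r)|$ fails to converge as $r\to 1$ for \emph{every} $\theta\in[0,2\pi)$. Along radii $r_j\to 1$ with $n_j(1-r_j)\asymp 1$ one checks that, with an error of order $1/j$,
\[
\frac{h(r_je^{i\theta})}{|\log(1-r_j)|} \;=\; \frac{1}{j\log q}\sum_{k\leqslant j} a_k\cos(n_k\theta+\arg a_k)\;+\;o(1),
\]
so the problem reduces to choosing the signs or phases of $(a_k)$ so that, for every $\theta$, the Ces\`aro averages of $a_k\cos(n_k\theta+\arg a_k)$ fail to converge.

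The main obstacle is precisely this last step. For a ``typical'' $\theta$ one expects equidistribution of the orbit $(n_k\theta\bmod 2\pi)$ to drive the Ces\`aro averages to zero, so a naive choice (e.g.\ $a_k\equiv 1$) cannot work at almost every $\theta$. Following \cite{Bor}, this is overcome by an explicit or Baire-category construction of the $a_k$, producing two interlaced subsequences $j_\ell,j'_\ell\to\infty$ along which the block sums $\sum_{j_\ell<k\leqslant j'_\ell}a_k\cos(n_k\theta+\arg a_k)$ are uniformly bounded below in absolute value and alternate in sign --- simultaneously for every $\theta$. This global control, exploiting the rigidity of lacunary orbits, is the technical core of the argument and is where the methods of \cite{Bor} are needed in full.
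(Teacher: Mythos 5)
Your proposal stops exactly at the point where the proof has to be done, and the framework you set up cannot be pushed through. With coefficients $a_k$ \emph{bounded} and frequencies $n_k$ of fixed lacunarity ratio (e.g.\ $n_k=2^k$), the block sums $\sum_{j<k\leqslant j'}a_k\cos(n_k\theta+\arg a_k)$ have $L^2(\leb)$-norm of order $\sqrt{j'-j}$ by orthogonality, and by the law of the iterated logarithm for lacunary series one has $S_j(\theta)=O(\sqrt{j\log\log j})$ for a.e.\ $\theta$, whatever the choice of phases. Hence the Ces\`aro averages $S_j(\theta)/j$ tend to $0$ almost everywhere, and by Abel summation $h(re^{\im\theta})/|\log(1-r)|\to 0$ for a.e.\ $\theta$: the radial limit then \emph{exists} (and equals $0$) almost everywhere, which is the opposite of what is needed. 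Note also that even the property you ask of the interlaced blocks -- block sums ``uniformly bounded below in absolute value'' -- would only perturb $S_j/j$ by $O(1/j)$ and so could not destroy convergence of the Ces\`aro averages; you would need block sums of size comparable to $j$, which the Parseval bound rules out on any set of positive measure. So the ``technical core'' you defer to \cite{Bor} is not a missing technicality: it is an impossibility within the bounded-coefficient, fixed-ratio lacunary setting.

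The paper (following the actual mechanism of \cite{Bor}) escapes this in two ways at once: the coefficients grow ($k!$) and the exponents are super-lacunary ($2^{k!}$, so $n_{k+1}/n_k\to\infty$). At radius $r$ with $1-r\approx 2^{-N!}$ a \emph{single} block $N!\,p\bigl(z^{2^{N!}}\bigr)$ carries essentially the whole mass $\approx|\log(1-r)|$, all other blocks being $o(N!)$. Largeness at \emph{every} angle $\alpha$ is then obtained not by controlling averages but by a geometric device: $p$ is a Runge polynomial with $|p|>1$ on a spiral $K$ winding more than once around $0$, so for each $N$ and each $\alpha$ one can pick $r_N$ with $r_N e^{\im\alpha 2^{N!}}\in K$, forcing $|h(z_N)|/|\log(1-|z_N|)|\geqslant 1-o(1)$ at $z_N=r_N^{1/2^{N!}}e^{\im\alpha}$. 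A second sequence of radii $|z|=1-2^{-N!\sqrt N}$, lying between consecutive dominant scales, makes the normalised function tend to $0$ uniformly. These two facts together kill the radial limit at every boundary point. To repair your argument you would have to abandon bounded $a_k$ and fixed-ratio lacunarity and introduce some analogue of the single-dominant-block plus spiral/Runge mechanism.
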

 
Of course, this function is also $\Lap$-biharmonic, so it is an example of a biharmonic
function in the sense of Discussion \ref{disc:bounded}. 

\smallskip

We now provide the details of the proof of Proposition \ref{pro:borichev}. 

\smallskip

The set $K = \bigl\{ \frac{t+1}{6}\, e^{3\pi t\im} : t \in [0\,,\,1] \bigr\}$ is a spiral
winding one and a half times around the origin with radius $r$ varying from $1/6$ to $1/3$. 
Using Runge's approximation theorem as for example stated by 
{\sc Rudin}~\cite[Thm. 13.7]{Rudin}, one finds a homogeneous polynomial
\begin{equation}\label{eq:hRunge}
p(z) = \sum_{j=1}^s p_j\,z^j
\quad \text{with}\quad |p(z) - 5/3| < 2/3 \; \text{ for all } \;z \in K\,.    
\end{equation}
It is of course harmonic, and on $\DD$,
$$
|p(z)| \leqslant B\,|z| \,,\quad \text{where}\quad B = \sum_{j=1}^s |p_j|\,.
$$
We construct the function
\begin{equation}\label{eq:hBorichev}
h(z) = \sum_{k=1}^{\infty} k! \, p\bigl(z^{2^{k!}}\bigr)\,, \quad z \in \DD\,.
\end{equation}
Note that $p(0)=0$. 
\begin{lem}\label{lem:logbound}
The series defining $h(z)$ converges absolutely in $\DD$, and
$$
\sup_{z \in \DD} \frac{|h(z)|}{\bigl|\log(1-|z|)\bigr|} < \infty\,.
$$
\end{lem}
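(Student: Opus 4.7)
The plan is to use the crude bound $|p(w)| \leqslant B\,|w|$ on $\DDbar$ (valid since $p$ has no constant term, with $B = \sum_{j=1}^s |p_j|$), reducing the lemma to controlling the nonnegative majorant
$$
S(r) \;:=\; \sum_{k=1}^\infty k!\, r^{2^{k!}}\,, \qquad r = |z| \in [0,1).
$$
Absolute convergence of the series defining $h(z)$ at every $z\in\DD$ then follows at once from the ratio test, since $(k+1)\,r^{2^{(k+1)!}-2^{k!}} \to 0$ for $r<1$. The range $r\in[0,\tfrac12]$ is immediate: there $S(r)$ is continuous and bounded, while $S(r) = O(r^2)$ near $r=0$ (only $j\geqslant 1$ powers appear), so $S(r)/|\log(1-r)|$ stays bounded on this range.

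For $r \in [\tfrac12, 1)$, let $N = N(r)\in\N$ be the unique integer with $2^{N!} \leqslant 1/(1-r) < 2^{(N+1)!}$, and split $S(r)$ at $k=N$. The ``head'' is easy: using $r^{2^{k!}} \leqslant 1$ together with $\sum_{k=1}^N k! \leqslant 2\,N!$,
$$
\sum_{k=1}^N k!\, r^{2^{k!}} \;\leqslant\; 2\,N! \;\leqslant\; \tfrac{2}{\log 2}\,\log\tfrac{1}{1-r}.
$$
For the ``tail'' $k \geqslant N+1$, use $r^m\leqslant e^{-m(1-r)}$ and set $A:= (1-r)\,2^{(N+1)!}\in(1,\infty)$. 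By definition of $N$, one has $(N+1)!\log 2 = \log A + \log\tfrac{1}{1-r}$, so
$$
(N+1)!\,e^{-A} \;=\; \frac{e^{-A}}{\log 2}\Bigl(\log A + \log\tfrac{1}{1-r}\Bigr) \;\leqslant\; \tfrac{e^{-1}}{\log 2}\,\log\tfrac{1}{1-r} + C_0\,,
$$
where $C_0 := (\log 2)^{-1}\sup_{A\geqslant 1}(\log A)\,e^{-A}<\infty$. For $k\geqslant N+2$ the quantity $(1-r)\,2^{k!}$ jumps by a factor $2^{k\cdot k!}$ at each step, so the remaining tail is dominated by its first term $(N+2)!\,e^{-A\cdot 2^{(N+1)(N+1)!}}$, which is bounded by an absolute constant. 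Adding up the three pieces yields $S(r) \leqslant C\,|\log(1-r)|$ for $r \geqslant \tfrac12$.

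The only delicate point is the borderline term $k=N+1$: the factor $(N+1)! = (N+1)\,N!$ overshoots the target $\log\tfrac{1}{1-r}\asymp N!$ by a factor $N+1$, while the compensating $e^{-A}$ may be as large as $e^{-1}$ (when $A$ is close to $1$). The identity $(N+1)!\log 2 = \log A + \log\tfrac{1}{1-r}$ is precisely what trades this excess factorial growth for the required linear-in-$\log\tfrac{1}{1-r}$ bound, via the fact that $A \mapsto (\log A)\,e^{-A}$ is bounded on $[1,\infty)$.
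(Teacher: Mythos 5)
Your proof is correct, but it follows a different route from the paper's. After the same reduction $|h(z)|\leqslant B\,S(r)$ with $S(r)=\sum_k k!\,r^{2^{k!}}$, you run a ``critical index'' argument: you locate $N=N(r)$ with $2^{N!}\leqslant \tfrac{1}{1-r}<2^{(N+1)!}$, bound the head by $\sum_{k\leqslant N}k!\leqslant 2N!\leqslant \tfrac{2}{\log 2}\log\tfrac{1}{1-r}$, handle the borderline term $k=N+1$ via the identity $(N+1)!\log 2=\log A+\log\tfrac{1}{1-r}$ together with the boundedness of $(\log A)e^{-A}$ on $[1,\infty)$, and crush the deep tail by the super-exponential decay of $e^{-(1-r)2^{k!}}$. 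The paper instead proves the single chain of inequalities
$$
-\log(1-r)=\sum_{n\geqslant 1}\frac{r^n}{n}\geqslant \frac12\sum_{l\geqslant 1}r^{2^l}
=\frac12\sum_{k\geqslant 1}\sum_{l=k!}^{(k+1)!-1}r^{2^l}\geqslant \frac14\sum_{k\geqslant 2}k!\,r^{2^{k!}},
$$
i.e.\ it lower-bounds the logarithm by the lacunary series through dyadic block grouping of the harmonic series, with no case distinction in $r$ and no splitting of the sum. Both arguments are sound; the paper's is shorter and uniform in $r$, while yours makes more explicit where the transition happens (the index $N(r)$) and why the borderline term is the only delicate one — information that is in fact reused in spirit in the proofs of the subsequent Lemmas \ref{lem:limsup} and \ref{lem:zero}. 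Your treatment of the small-$r$ range (using $S(r)=O(r^2)$ against $|\log(1-r)|\sim r$) is a correct and slightly more careful handling of the ratio near $z=0$ than the paper's. The only cosmetic looseness is the phrase ``dominated by its first term'' for the $k\geqslant N+2$ tail; strictly one should note that consecutive terms there decrease by factors far below $1/2$ (the exponent $(1-r)2^{k!}$ multiplies by $2^{k\cdot k!}$ at each step while the factorial only multiplies by $k+1$), so the tail is at most twice its first term — but this is immediate and does not affect correctness.
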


\begin{proof} 
Consider
$$
f(z) = \sum_{n=1}^{\infty} \frac{z^n}{n} = -\log(1-z). 
$$
For real $r \in [0\,,\,1)$, the sequence $(r^n/n)$ is decreasing, whence
$$
\begin{aligned}
f(r) &= \sum_{l=0}^{\infty} \sum_{n=2^l}^{2^{l+1}-1} \frac{r^n}{n} \geqslant 
\sum_{l=0}^{\infty} 2^l \frac{r^{2^{l+1}-1}}{2^{l+1}-1} \geqslant 
\frac{1}{2} \sum_{l=1}^{\infty} r^{2^l}
= \frac12\sum_{k=1}^{\infty} \sum_{l=k!}^{(k+1)!-1} r^{2^l} \\
&\geqslant \frac12\sum_{k=1}^{\infty} \bigl((k+1)!-k!\bigr) r^{2^{(k+1)!-1}} 
\geqslant \frac12\sum_{k=1}^{\infty} \bigl(1 - \frac{1}{k+1}\bigr) (k+1)!\, r^{2^{(k+1)!}} 
\geqslant \frac14 \sum_{k=2}^{\infty} k!\, r^{2^{k!}}\,.
\end{aligned}
$$
Therefore
$$
\sum_{k=1}^{\infty} k!\, |z|^{2^{k!}} \leqslant |z|^2 + 4 \bigl|\log(1-|z|)\bigr| 
\leqslant c \, \bigl|\log(1-|z|)\bigr|\,.
$$
Now
$$
|h(z)| 
\leqslant \sum_{j=1}^{s} |p_j| \sum_{k=1}^{\infty} k!\, |z|^{j\,2^{k!}} 
\leqslant c \sum_{j=1}^{s} |h_j| \bigl|\log(1-|z|^j)\bigr|
\leqslant c\, B \,\bigl|\log(1-|z|)\bigr|,
$$
as stated.
\end{proof}

\begin{lem}\label{lem:limsup}
For every $\alpha \in (-\pi\,,\pi]$,
$$
\limsup_{r \to 1} \frac{|h(r \, e^{\im \alpha})|}{\bigl|\log(1-r)\bigr|} \geqslant 1\,.
$$
\end{lem}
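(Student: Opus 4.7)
The plan is to exploit the spiral $K$ so that, for each fixed $\alpha$ and each large $k$, we can locate a radius $r_k$ near $1$ for which $z_k := r_k e^{\im\alpha}$ satisfies $z_k^{2^{k!}} \in K$. Since $|p(w) - 5/3| < 2/3$ on $K$ by \eqref{eq:hRunge}, this forces $\Re p(z_k^{2^{k!}}) > 1$, so the $k$-th term of the series \eqref{eq:hBorichev} contributes a real part exceeding $k!$, while the other terms are shown to be negligible.

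First I would establish the geometric fact behind the choice of $r_k$. As $t$ runs through $[0,1]$, the point $\frac{t+1}{6}e^{3\pi t\im}$ sweeps out $K$, and the argument $3\pi t$ traverses the interval $[0,3\pi]$, so it hits every residue class mod $2\pi$ at least once. Hence, given $\alpha\in(-\pi,\pi]$, for every $k$ there exists $t_k\in[0,1]$ with $3\pi t_k \equiv \alpha\,2^{k!}\pmod{2\pi}$. Defining $r_k\in(0,1)$ by $r_k^{2^{k!}} = (t_k+1)/6 \in [1/6, 1/3]$ makes $z_k^{2^{k!}}$ land on $K$, so
\[
\Re p\bigl(z_k^{2^{k!}}\bigr) > 1.
\]

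Next I would split $h(z_k)$ into the main term $k!\,p(z_k^{2^{k!}})$, the lower tail $\sum_{j<k}$, and the upper tail $\sum_{j>k}$. Since $p(0)=0$, one has $|p(w)| \leqslant B|w|$ on $\DD$, so for $j<k$ the bound $|z_k^{2^{j!}}| \leqslant 1$ gives
\[
\Bigl|\sum_{j=1}^{k-1} j!\,p\bigl(z_k^{2^{j!}}\bigr)\Bigr| \leqslant B\sum_{j=1}^{k-1} j! \leqslant 2B(k-1)! = o(k!).
\]
For $j>k$, the identity $r_k^{2^{j!}} = \bigl(r_k^{2^{k!}}\bigr)^{2^{j!-k!}} \leqslant (1/3)^{2^{j!-k!}}$ yields
\[
\Bigl|\sum_{j>k} j!\,p\bigl(z_k^{2^{j!}}\bigr)\Bigr| \leqslant B\sum_{j>k} j!\,(1/3)^{2^{j!-k!}},
\]
which is super-exponentially small. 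Combining, $\Re h(z_k) \geqslant k! - 2B(k-1)! - o(1)$, and in particular $|h(z_k)|\geqslant k!\bigl(1 - O(1/k)\bigr)$.

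Finally I would estimate the denominator. From $r_k^{2^{k!}} \in [1/6, 1/3]$ we get $-\log r_k \asymp 2^{-k!}$, whence $1 - r_k \sim -\log r_k$ and therefore
\[
|\log(1-r_k)| = k!\log 2 + O(1) \quad \text{as } k \to \infty.
\]
Dividing, the ratio $|h(z_k)|/|\log(1-r_k)|$ tends to $1/\log 2 > 1$, which yields the claimed lower bound $\limsup \geqslant 1$.

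The main obstacle is purely bookkeeping: one must verify that the spiral is wound just enough to accommodate every boundary angle (which is why the factor $3\pi$ in the parametrisation of $K$ matters) and confirm that the tail bounds survive when $\alpha$ is arbitrary. Once the choice of $r_k$ is in hand, the rest is a direct comparison of the series with $k!\log 2$.
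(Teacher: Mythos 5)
Your argument is correct and follows essentially the same route as the paper: pick $z_k$ so that $z_k^{2^{k!}}$ lands on the spiral $K$ (forcing $\Re p(z_k^{2^{k!}})>1$), show the $k$-th term $k!\,p(z_k^{2^{k!}})$ dominates both tails of the series, and compare with $|\log(1-r_k)|$. You are in fact slightly more careful than the paper on the denominator, correctly getting $|\log(1-r_k)| = k!\log 2 + O(1)$ (the paper writes $\sim N!$), so your subsequential lower bound is $1/\log 2 > 1$, which still yields $\limsup \geqslant 1$.
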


\begin{proof} We fix $\alpha$ and $N \in \N$ and can find $r_N = r_N(\alpha)$
such that 
$$
w_N = r_N\, e^{\im \alpha\, 2^{N!}} \in K\,.
$$
Thus,
$$
1/6 \leq r_N\leqslant 1/3\,, \quad \bigl|p(w_N) - 5/3| < 2/3\,,\AND \Re\bigl(p(w_N)\bigr) > 1. 
$$
Now we choose
$$
z_N = r_N^{1/2^{N!}}\,e^{\im \alpha}\,,  
$$
so that $|z_N| \to 1$, and we consider 
$$
h(z_N) - N!\,p\bigl(z_N^{2^{N!}}\bigr) 
= \sum_{k=1}^{N-1} k!\, p\bigl(z_N^{2^{k!}}\bigr) 
+ \sum_{k=N+1}^{\infty} k!\, p\bigl(z_N^{2^{k!}}\bigr). 
$$
We can estimate the first sum by 
$$
\left| \sum_{k=1}^{N-1} k!\, p\bigl(z_N^{2^{k!}}\bigr)  \right| \leqslant B \sum_{k=1}^{N-1} k! 
\leqslant 2B\,(N-1)! = \frac{2B}{N} N!\,.
$$
and, by setting $x_N = (1/3)^{N!}$ and using that $2^n \geqslant n$, the second sum is majorized as follows:
$$
\begin{aligned}
\left| \sum_{k=N+1}^{\infty} k!\, p\bigl(z_N^{2^{k!}}\bigr) \right|
&\leqslant B\, N! \sum_{k=N+1}^{\infty} \frac{k!}{N!}\, x_N^{2^{(k!/N!)-1}} 
\leqslant B\, N! \sum_{k=N+1}^{\infty} \frac{k!}{N!}\, x_N^{(k!/N!)-1}\\[.25cm]
&\leqslant B\, N! \sum_{m=N+1}^{\infty} m\, x_N^{m-1} 
= B\, N! \, \underbrace{\frac{(N+1)x_N^N - Nx_N^{N+1}}{(1-x_N)^2}}_{\displaystyle 
                                   \to 0\,,\; \text{ as } \; N \to \infty}\,. 
\end{aligned}
$$
It follows that
$$
\Bigl|h(z_N) - N!\,\underbrace{p\bigl(z_N^{2^{N!}}}_{\displaystyle p(w_n)}\bigr)\Bigr| 
\leqslant N!\, \epsilon_N\,, 
\quad \text{where }\; \epsilon_N \to 0 \; \text{ as } \; N \to \infty\,.
$$
Now note that $1 < \log(1/r_N) < 2$. Using this and the fact that for $x >0$
one has $x-x^2/2 \leqslant 1-e^{-x} \leqslant x$, one obtains that 
$$
1-|z_N| = 1- e^{-2^{-N!} \log(1/r_N)} 
\begin{cases} < 2\cdot 2^{-N!}\\ > \frac12 \cdot 2^{-N!}.
\end{cases}
$$
We deduce that 
$\bigl|\log(1-|z_N|)\bigr| \sim N!\,$, whence 
$$
\left| \frac{h(z_N)}{\bigl|\log(1-|z_N|)\bigr|} - p(w_N) \right| \to 0  
\quad \text{ as } \; N \to \infty\,.
$$
Since $|p(w_N)| \geqslant \Re\bigl(p(w_N)) > 1$, the statement is proved.
\end{proof}

\medskip

\begin{lem}\label{lem:zero}
$\qquad \displaystyle
\lim_{N \to \infty} \sup_{|z| = 1-2^{-N!\sqrt{N}}} \frac{h(z)}{\bigl|\log(1-|z|)\bigr|} = 0\,.
$
\end{lem}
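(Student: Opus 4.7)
My plan is to bound $|h(z)|$ directly from its series definition on the circle $|z|=r_N$ with $r_N = 1 - 2^{-N!\sqrt N}$, using only the elementary estimate $|p(w)| \leqslant B|w|$ on $\DD$ (coming from $p(0)=0$ and $\deg p = s$), and then divide by $|\log(1-r_N)| = N!\sqrt N \log 2$. So the whole task reduces to showing
$$
S_N := \sum_{k=1}^{\infty} k!\, r_N^{2^{k!}} = \mathfrak{o}\bigl(N!\sqrt N\bigr) \quad \text{as }\; N \to \infty,
$$
because $|h(z)| \leqslant B\, S_N$ uniformly for $|z| = r_N$.

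The natural decomposition is $S_N = \Sigma_1 + \Sigma_2 + \Sigma_3$ with $\Sigma_1 = \sum_{k < N}$, $\Sigma_2 = N!\,r_N^{2^{N!}}$, and $\Sigma_3 = \sum_{k > N}$. For $\Sigma_1$, I use $r_N < 1$ and the well-known estimate $\sum_{k=1}^{N-1} k! \leqslant 2(N-1)! = 2N!/N$. For $\Sigma_2$ I trivially bound $r_N^{2^{N!}} \leqslant 1$, giving $\Sigma_2 \leqslant N!$. Both are $\mathfrak{o}(N!\sqrt N)$, so these parts cause no trouble.

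The only step requiring some care is $\Sigma_3$, and this is where I expect the main obstacle, though it turns out to be modest. I will use
$$
r_N^{2^{k!}} \leqslant \exp\bigl(-2^{k!}(1-r_N)\bigr) = \exp\bigl(-2^{\,k! - N!\sqrt N}\bigr).
$$
For $k \geqslant N+1$ one has $k! - N!\sqrt N \geqslant N!(N+1-\sqrt N)$, which tends to $+\infty$ as $N\to\infty$ (super-exponentially in $N!$), so already the $k=N+1$ term is bounded by $(N+1)!\,\exp\bigl(-2^{N!}\bigr)$. The remaining terms $k \geqslant N+2$ contribute even less: since $k!/(k-1)! = k$ grows polynomially while the exponent doubles at every step in $k!$, a crude geometric-type comparison shows $\Sigma_3$ is bounded by a super-exponentially small quantity, certainly $\mathfrak{o}(1)$.

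Putting everything together, $S_N \leqslant 2N!/N + N! + \mathfrak{o}(1) = \mathfrak{o}(N!\sqrt N)$, hence
$$
\sup_{|z|=r_N} \frac{|h(z)|}{|\log(1-|z|)|} \leqslant \frac{B\,S_N}{N!\sqrt N\,\log 2} \longrightarrow 0,
$$
which is the claim. The point of the exotic radii $r_N = 1-2^{-N!\sqrt N}$ is exactly to make $|\log(1-r_N)|$ asymptotically larger than $N!$ (the dominant contribution in $S_N$) while still being small enough that the tail $k > N$ remains negligible; any sequence of radii chosen so that $(1-r_N)^{-1}$ grows like $2^{N!\omega(N)}$ with $1 \ll \omega(N) \ll N$ would work equally well.
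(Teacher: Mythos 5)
Your proof is correct and follows essentially the same route as the paper's: both reduce the claim to bounding $\sum_k k!\,r_N^{2^{k!}}$ by $2N!$ plus a negligible tail over $k>N$, then divide by $|\log(1-r_N)|=N!\sqrt{N}\,\log 2$. The only difference is in the bookkeeping for the tail — you use $r_N^{2^{k!}}\leqslant \exp\bigl(-2^{k!-N!\sqrt{N}}\bigr)$ and a ratio/geometric comparison, whereas the paper uses $(1-2^{-N!\sqrt{N}})^{2^{N!\sqrt{N}}}\leqslant 1/e$ together with $2^n\geqslant n$ and an explicit evaluation of $\sum_m m\,y^{m-1}$ — and both give the same conclusion.
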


\begin{proof}
It is sufficient to show that
$$
\lim_{N \to \infty} \frac{h_0\bigl(1-2^{-N!\sqrt{N}}\bigr)}{N!\sqrt{N}} = 0\,,\quad 
\text{where} \quad
h_0(z) = \sum_{k=1}^{\infty} k!\, z^{2^{k!}}.
$$
Similarly as above, using the fact that  $(1-2^{-N!\sqrt{N}})^{2^{N!\sqrt{N}}} \leqslant 1/e$ 
and setting $y_N = e^{-N!}$, we find
$$
\begin{aligned}
h_0\bigl(1-2^{-N!\sqrt{N}}\bigr) &\leqslant 
\sum_{k=1}^{N} k!\,  +  \sum_{k=N+1}^{\infty} k!\,\bigl(\frac{1}{e}\bigr)^{2^{k!-N!\sqrt{N}}}\\ 
&\leqslant 2\,N! 
+ N! \sum_{k=N+1}^{\infty} \frac{k!}{N!}\,(y_N)^{(k!/N!)-1} y_N^{1-\sqrt{N}}\\ 
&\leqslant 2\,N! 
+ N! \sum_{m=N+1}^{\infty} m\,(y_N)^{m-1} y_N^{1-\sqrt{N}}\\ 
&= 2\,N! 
+ N! \underbrace{\frac{(N+1)y_N^{N+1-\sqrt{N}} - Ny_N^{N+2-\sqrt{N}}}{(1-y_N)^2}}_{\displaystyle 
                                   \to 0\,,\; \text{ as } \; N \to \infty}. 
\end{aligned}                                   
$$
Divided by $N!\sqrt{N}$, this tends to $0$.
\end{proof}

Lemmas \ref{lem:logbound}, \ref{lem:limsup} and \ref{lem:zero} prove Proposition 
\ref{pro:borichev}. With very small adaptations, one verifies that the same statement
holds for the harmonic function $\Re h(z)$.
We conclude with the following observation, which should not look too surprising. (Recall that 
for $\lambda = 0$, $\Phi_2(z|0) \sim R^2/2$, as $r = |z| \to 1$.)

\begin{lem}\label{lem:fh-borichev}
The biharmonic function $f_h$ associated with the function $h$ of  \eqref{eq:hBorichev}
is such that  $f_h(z)/R^2$ is bounded and has no radial limits at any point of $\partial \DD$.
\end{lem}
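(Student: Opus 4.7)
The plan is to start from the Fourier expansion $f_h(z) = \sum_n h_n\, d_n(|z|)\, z^n$ with $h_n = k!\,p_j$ when $n = j\cdot 2^{k!}$ (and zero otherwise), and to compare $f_h$ with the leading term $d_0(r) h(z)$, where $d_0(r) = -\log(1-r^2) \sim R$ and $|h(z)| = O(R)$ by Lemma \ref{lem:logbound}. The key inputs will be the trivial bound $d_n(r) \leq d_0(r) + 1 + \log n$ (immediate from \eqref{eq:dn}) together with a sharper asymptotic of $d_n(r)$ in the borderline regime $n(1-r) = O(1)$.

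For boundedness of $f_h(z)/R^2$, the above bound on $d_n$ gives
\[
|f_h(z)| \;\leq\; d_0(r)\,B\,h_0(r) \;+\; C \sum_{k\geq 1}(k!)^2\,r^{2^{k!}},
\]
where $h_0(r) = \sum k!\,r^{2^{k!}}$ and $B = \sum|p_j|$. The first term is $O(R^2)$ directly from Lemma \ref{lem:logbound}. For the second, the same peak analysis as inside the proof of Lemma \ref{lem:logbound} isolates a dominant index $k_0$ with $k_0! \asymp R/\log 2$, contributing $(k_0!)^2 = O(R^2)$; terms with $k > k_0$ are doubly exponentially small since $r^{2^{k!}} \leq e^{-2^{k!}(1-r)}$ with $2^{k!}(1-r) \gg 1$.

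For the non-existence of radial limits at any $\xi = e^{i\alpha}$, I would evaluate $f_h$ along the two subsequences from Lemmas \ref{lem:limsup} and \ref{lem:zero}. Along $z_N = r_N^{1/2^{N!}}e^{i\alpha}$, one has $1-|z_N| \asymp 2^{-N!}$, $R_N \sim N!\log 2$, and for $n = j\cdot 2^{N!}$ with $1 \leq j \leq s$ a substitution $k = u/(1-|z_N|)$ in the tail $\sum_k |z_N|^{2k}/(k+n)$ recognises it as $O(1)$, giving $d_n(|z_N|) = \log n + O(1) = N!\log 2 + O(1)$ uniformly in $j$. Hence the $k=N$ summand of $f_h(z_N)$ equals $(N!)^2(\log 2)\,p(w_N) + O(N!)$; summands with $k<N$ contribute at most $d_0(|z_N|)\,P_{\max}\sum_{k<N} k! = O((N!)^2/N)$ (using $d_n \approx d_0$ when $n(1-r)\to 0$), and summands with $k>N$ are doubly exponentially small because $|z_N|^{2^{k!}}$ is negligible. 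Dividing by $R_N^2 \sim (N!\log 2)^2$ yields $f_h(z_N)/R_N^2 \to p(w_N)/\log 2$, of modulus $>1/\log 2 > 1$ by the Runge inequality \eqref{eq:hRunge}. Along the second subsequence $z_N' = (1 - 2^{-N!\sqrt N})e^{i\alpha}$, Lemma \ref{lem:zero} gives $h_0(|z_N'|) = o(R)$, so $d_0\cdot h_0 = o(R^2)$, and at this radius the error sum $\sum(k!)^2 r^{2^{k!}}$ is bounded by $2(N!)^2 = O(R^2/N) = o(R^2)$. Hence $f_h(z_N')/R(z_N')^2 \to 0$, and the two disagreeing subsequential behaviors preclude any radial limit.

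The main obstacle is the sharp uniform estimate $d_n(r) = \log n + O(1)$ in the borderline regime $n(1-r) = O(1)$, which is finer than the trivial bounds from \eqref{eq:dnlog} and requires a Riemann-sum-type analysis of $\sum_k r^{2k}/(k+n)$; everything else is a repetition of the peak analysis developed for Lemma \ref{lem:logbound}.
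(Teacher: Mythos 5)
Your proposal is correct in substance, but it takes a genuinely different route from the paper. The paper does not touch the individual coefficients $d_n(r)$ again at this stage: it invokes the already established inequality \eqref{eq:nhn}, namely $\bigl|f_h(z)-\log\tfrac{1}{1-r^2}\,h(z)\bigr|\leqslant(1-r^2)\sum_n n\,|h_n|\,r^n$, and shows via an elementary lower bound for $\tfrac{1}{1-r}\log\tfrac{1}{1-r}$ (the step $(*)$) that the right-hand side is $O\bigl(\log\tfrac{1}{1-r}\bigr)=O(R)$. Hence $f_h(z)/R^2-h(z)/R\to 0$ uniformly as $r\to 1$, and both the boundedness and the failure of radial limits are inherited verbatim from Proposition \ref{pro:borichev}. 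You instead re-run the two-subsequence analysis directly on $f_h$, which forces you to prove the finer asymptotic $d_n(r)=\log n+O(1)$ in the regime $n(1-r)\asymp 1$; this is true and follows as you indicate by splitting $\sum_k r^{2k}/(k+n)$ at $k=n$, and it is genuinely needed, since the crude bound \eqref{eq:dnlog} only gives $d_n\asymp d_0$ up to a multiplicative constant, which would not prevent cancellation in $\sum_j p_j\,d_{j2^{N!}}\,w_N^j$. Your route costs more computation but stays self-contained at the level of the Fourier coefficients; the paper's route buys the lemma almost for free from the harmonic case. One point in your boundedness argument needs slightly more care than stated: at the threshold index $k_0+1$ the quantity $2^{k!}(1-r)$ is only $>1$, not $\gg 1$, so $((k_0+1)!)^2\,r^{2^{(k_0+1)!}}$ is not automatically negligible. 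Setting $x=2^{(k_0+1)!}(1-r)$ one has $(k_0+1)!=\log_2 x+\log_2\tfrac{1}{1-r}$ and $r^{2^{(k_0+1)!}}\leqslant e^{-x}$, whence $((k_0+1)!)^2 e^{-x}\leqslant 2(\log_2 x)^2e^{-x}+2\bigl(\log_2\tfrac{1}{1-r}\bigr)^2=O(R^2)$, so the conclusion stands; everything else checks out.
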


\begin{proof}
Similarly to Lemma \ref{lem:logbound}, for $0 \leqslant r < 1$,
$$
\begin{aligned} \frac{1}{1-r} \log \frac{1}{1-r} 
&= \sum_{n=1}^{\infty} \left(\sum_{j=1}^n \frac{1}{j}\right) r^n
\geqslant \sum_{k=1}^{\infty} 
\left(\sum_{n=2^{k!}}^{2^{(k+1)!}-1} \sum_{j=1}^n \frac{1}{j}\right) r^{2^{(k+1)!}} \\
&\stackrel{(*)}{\geqslant} \tilde c \sum_{k=1}^{\infty} (k+1)! \, 2^{(k+1)!} \, r^{2^{(k+1)!}}. 
\end{aligned}
$$
For $(*)$, see below.
We get via \eqref{eq:nhn}
$$
\left| f_h(z) - \log\frac{1}{1-r^2}\,h(z) \right| \leqslant (1-r^2) \sum_{j=1}^N j\,|p_j| 
\sum_{k=1}^{\infty} k! \, 2^{k!}\, r^{j2^{k!}} \leqslant \widetilde C \log \frac{1}{1-r}
$$
for a suitable constant $\widetilde C > 0$. The statement follows by dividing by 
$\bigl(\log \frac{1}{1-r}\bigr)^2$ and applying Proposition \ref{pro:borichev}.

Let us prove $(*)$:
$$
\begin{aligned}
Q &:=\frac{1}{(k+1)!\, 2^{(k+1)!}}\sum_{n=2^{k!}}^{2^{(k+1)!}-1} \sum_{j=1}^n \frac{1}{j}  
\\
&\phantom{:} = 
\frac{1}{(k+1)!\, 2^{(k+1)!}}\, 2^{k!} \sum_{j=1}^{2^{k!}} \frac{1}{j} \; + \; 
\frac{1}{(k+1)!\, 2^{(k+1)!}}\sum_{j=2^{k!}+1}^{2^{(k+1)!}-1} \frac{2^{(k+1)!}-j}{j}.
\end{aligned}
$$
As $k \to \infty$, the first term behaves like 
$$
\frac{ 2^{k!}\, k! \,\log 2}{(k+1)!\, 2^{(k+1)!}} \to 0\,,
$$
while the second term is
$$
\frac{1}{(k+1)!} \sum_{j=2^{k!}+1}^{2^{(k+1)!}-1} \Bigl(\frac{1}{j} - \frac{1}{2^{(k+1)!}}\Bigr) 
\to \log 2\,.
$$
Thus, the quotient $Q$ is bounded below by some $\tilde c > 0$ for all $k$.
\end{proof}

\medskip

\newpage


\begin{thebibliography}{22}


\bibitem{Al} Almansi, E.: \emph{Sull'integrazione dell'equazione differenziale $\Delta^{2n}=0$.} 
Annali di Matematica, Serie III {\bf 2} (1899) 1--59.

\bibitem{ACL} Aronszajn, N., Creese, T. M., and Lipkin, L. J.: \emph{Polyharmonic Functions.} 
Oxford Math. Monographs, Oxford University Press, New York, 1983.

\bibitem{BW} Boiko, T., and Woess, W.: \emph{Moments of Riesz measures on Poincar\'e 
disk and homogeneous tree -- a comparative study.} Expositiones Math. {\bf 33}, no. 3 (2015) 
353--374. 

\bibitem{Bor} Borichev, A., Lyubarskiĭ, Yu., Malinnikova, E., and Thomas, P.: 
\emph{Radial growth of functions in the Korenblum space.} 
Algebra i Analiz {\bf 21}, no.6 (2009) 47--65; reprinted in St. Petersburg Math. J. 
{\bf 21}, no. 6. (2010) 877--891.

\bibitem{Ch} Chung, L. O.: \emph{Biharmonic and polyharmonic principal functions.} 
Pacific J. Math. {\bf 86}, no. 2 (1980) 437--445.

\bibitem{ChSaWa} Chung, L. O., Sario, L., and Wang, C.: \emph{Riemannian manifolds 
with bounded Dirichlet finite polyharmonic functions.} Ann. Scuola Norm. Sup. Pisa 
Cl. Sci. {\bf 27}, no. 4 (1973) 809--814.

\bibitem{CCGS} Cohen, J. M., Colonna, F., Gowrisankaran, K., and Singman, D.:
\emph{Polyharmonic functions on trees.} Amer. J. Math. {\bf 124}, no. 5 (2002) 999--1043.

\bibitem{Ey} Eymard, P.: \emph{Le noyau de Poisson et la th\'eorie des groupes.} 
Symposia Mathematica {\bf 22} (1977) 107--132. 

\bibitem{Gar} Garnett, J. B.: \emph{Bounded Analytic Functions.} Revised first edition. 
Graduate Texts in Mathematics {\bf 236}. Springer, New York, 2007.

\bibitem{GGS} Gazzola, F., Grunau, H-Ch., and Sweers, G. : \emph{Polyharmonic 
Boundary Value Problems.}  Lecture Notes in Mathematics {\bf 1991}, Springer, Berlin, 2010. 

\bibitem{GrOt} Grellier, S., and Otal, J-P.: \emph{Bounded eigenfunctions in the real 
hyperbolic space.} Int. Math. Res. Not. {\bf 62} (2005) 3867--3897.

\bibitem{He} Helgason, S.: \emph{Eigenspaces of the Laplacian; integral representations 
and irreducibility.} J. Functional Analysis {\bf 17} (1974), 328--353.

\bibitem{Hel} Helgason, S.: \emph{Geometric Analysis on 
 Symmetric Spaces.} 
Mathematical Surveys and Monographs {\bf 39}. American Mathematical Society, Providence, RI, 1994.

\bibitem{Hi1919} Hille, E.: \emph{Some problems concerning spherical harmonics.} 
Ark. f\"or Mat., Astron. och Fys. {\bf 13}, No. 17, 76 p. (1919). 

\bibitem{Hi1923} Hille, E.: \emph{On the zeros of Legendre functions.} 
Ark. f\"or Mat., Astron. och Fys. {\bf 17}, No. 22, 16 p. (1923). 

\bibitem{Hoe}  H\"ormander, L.: \emph{The Analysis of Linear Partial Differential Operators. I.} Second edition, Springer-Verlag, Berlin, 1990. 

\bibitem{Hobson} Hobson, E. W.: \emph{The Theory of Spherical and Ellipsoidal Harmonics.}
Cambridge Univ. Press., 1931.
 
\bibitem{Jam} Jaming, Ph.: \emph{Harmonic functions on classical rank one balls.} 
Boll. Unione Mat. Ital. Sez. B {\bf 4}, no. 3 (2001) 685--702. 

\bibitem{Karp}  Karpelevi\v{c}, F. I.: \emph{The geometry of geodesics and the eigenfunctions 
of the Beltrami-Laplace operator on symmetric spaces.} Trudy Moskov. Mat. Ob\v{s}\v{c}. {\bf 14} (1965) 
48--185 (Russian); translated as Trans. Moscow Math. Soc. {\bf 1965} (1967) 51--199.

\bibitem{Kor} Kor\'anyi, A.: \emph{Boundary behavior of Poisson integrals on symmetric spaces.}
Trans. Amer. Math. Soc. {\bf 140} (1969) 393--409. 
 
\bibitem{KP} Kor\'anyi, A., and Picardello, M. A.: \emph{Boundary behaviour of 
eigenfunctions of the Laplace operator on trees.}  Ann. Scuola Norm. Sup. Pisa Cl. Sci. (4) {\bf 13}, no. 3 (1986) 389--399. 

\bibitem{Koranyi&Vagi}
Kor\'anyi, A., and Vagi, S.: \emph{Singular integrals on homogeneous spaces and some problems of classical analysis.} Ann.~Scuola Norm.~Sup.~Pisa Cl.~Sci. {\bf 25} (1971) 575--618.

\bibitem{Ko} K\"othe, G.: \emph{Die Randverteilungen analytischer Funtionen.} Math. Z.
{\bf 57} (1952) 13--33.

\bibitem{Lebedev} Lebedev, N. N.: \emph{Special Functions and their Applications.} 
Revised English edition Translated and edited by Richard A. Silverman. Prentice-Hall, Inc., Englewood Cliffs, N.J., 1965.

\bibitem{Maz} Mazalov, M. Ya.: \emph{On the existence of angular boundary values 
of polyharmonic functions in the ball.} 
J. Math. Sci. (N.Y.) {\bf 234}, no. 3, (2018) 362--368.

\bibitem{Meh} Mehler, F. G.: \emph{Ueber eine mit den Kugel- und Cylinderfunctionen verwandte Function und ihre Anwendung in der Theorie 
der Elektricit\"atsvertheilung.} Math. Annalen {\bf 18} (1881) 161--194.
 
\bibitem{Mich} Michelson, H. L.: \emph{Fatou theorem for eigenfunctions of the invariant differential
operators on symmetric spaces.} Trans. Amer. Math. Soc. {\bf 177} (1973) 257--274.

\bibitem{PW} Picardello, M. A., and Woess, W.: \emph{Boundary representations of 
$\lambda$-harmonic and polyharmonic functions on trees.} Potential Analysis \textbf{51}, no. 4 (2019) 541--561.

\bibitem{Rudin} Rudin, W.: \emph{Real and Complex Analysis.} Third edition. 
McGraw-Hill Book Co., New York, 1987.

\bibitem{Sava&Woess}
Sava-Huss, E., and  Woess, W.: \emph{Boundary behaviour of $\lambda$-polyharmonic functions on regular trees.} Ann. Mat. Pura Appl. (4) {bf 200}, no. 1 (2021) 35--50.

\bibitem{SchBe} Schimming, R., and Belger, M.: \emph{Polyharmonic radial functions on 
a Riemannian manifold.}  Math. Nachr. {\bf 153} (1991) 207--216.

\bibitem{Schl} Schlichtkrull, H.: \emph{Hyperfunctions and Harmonic Analysis on Symmetric 
Spaces.} Progress in Mathematics {\bf 49}. Birkhäuser, MA, 1984.

\bibitem{Sj} Sj\"ogren, P.: \emph{Une remarque sur la convergence des fonctions propres du 
laplacien \`a valeur propre critique.} In \emph{Th\'eorie du Potentiel (Orsay, 1983)}, 544--548, Lecture Notes in Math. {\bf 1096}, Springer, Berlin, 1984.

\bibitem{StWe} Stein, E. M., and Weiss, G.: 
\emph{Introduction to Fourier Analysis on Euclidean Spaces.}
Princeton University Press, Princeton, N.J., 1971. 

\bibitem{Sul} Sullivan, D.: \emph{Related aspects of positivity in Riemannian geometry.} 
J. Differential Geom. {\bf 25}, no. 3  (1987) 327--351.

\bibitem{Tay} Taylor, J. C.: \emph{The Martin compactification associated with a second order 
strictly elliptic partial differential operator on a manifold M.} In: \emph{Topics in Probability 
and Lie Groups: Boundary Theory,} 153--202. CRM Proceedings Lecture Notes {\bf 28}, American
Mathematical Society, Providence, RI, 2001.

\bibitem{Zygmund} Zygmund, A.: \emph{Trigonometric Series}, 2$^{\mathrm nd}$ edition. Cambridge
Univ. Press, New York, 1959.

\end{thebibliography}
\end{document}